\newtheorem{theorem}{Theorem}[section]
\newtheorem{proposition}[theorem]{Proposition}
\newtheorem{lemma}[theorem]{Lemma}
\newtheorem{corollary}[theorem]{Corollary}
\newtheorem{example}[theorem]{Example}
\newtheorem{definition}[theorem]{Definition}
\newtheorem{remark}[theorem]{Remark}
\def\H{\mathrm{H}}
\def\Z{\mathbb{Z}}
\def\E{\mathcal{E}}
\def\D{\mathcal{D}}
\newcommand{\probname}{Crazy Knight's Tour Problem}
\def\G{\Gamma}
\def\R{\mathcal{R}}
\def\C{\mathcal{C}}
\newtheorem*{KN}{Crazy Knight's Tour Problem}
\begin{document}
\title{On relative simple Heffter spaces}

\author[L. Johnson]{Laura Johnson}
\address{School of Mathematics, University of Bristol, Bristol, BS8 1UG, United Kingdom}
\email{laura.marie.johnson@bristol.ac.uk}
\author[L. Mella]{Lorenzo Mella}
\address{Dip. di Scienze Fisiche, Informatiche, Matematiche, Universit\`a degli Studi di Modena e Reggio Emilia, Via Campi 213/A, I-41125 Modena, Italy}
\email{lorenzo.mella@unipr.it}
\author[A. Pasotti]{Anita Pasotti}
\address{DICATAM - Sez. Matematica, Universit\`a degli Studi di Brescia, Via
Branze 43, I-25123 Brescia, Italy}
\email{anita.pasotti@unibs.it}

\keywords{Heffter system, partial linear space, orthogonal cycle decompositions}

\begin{abstract}
In this paper, we introduce the concept of a relative Heffter space which simultaneously generalizes those of relative Heffter arrays and Heffter spaces. Given a subgroup $J$ of an abelian group $G$, a  relative Heffter space is a resolvable configuration whose points form a half-set of $G\setminus J$ and whose blocks are all zero-sum in $G$. Here we present two infinite families of relative Heffter spaces satisfying the additional condition of being simple. As a consequence, we get new results on globally simple relative Heffter arrays, on mutually orthogonal cycle decompositions and on biembeddings of cyclic cycle decompositions of the complete multipartite graph into an orientable surface.
\end{abstract}

\maketitle

\section{Introduction}
The concept of a Heffter space has been recently introduced in \cite{BP} as a generalization of the well-known notion of a Heffter array, see \cite{A}. In this paper we consider \emph{relative} Heffter spaces which 
are a natural generalization of relative Heffter arrays
introduced in \cite{CMPP} and also of Heffter spaces. Firstly, we recall some necessary concepts and notation.

Given an additive group $G$ of order $2v+t$ and a subgroup $J$ of $G$ of order $t$, a \emph{half-set} of $G\setminus J$ is a size $v$ subset $V$ of $G$ such that $V\cup (-V)=G\setminus J$. When $J$ is the trivial subgroup, that is when $t=1$, one simply says that $V$ is a half-set of $G$.

\begin{definition}
Let $G$ be an abelian group of order $2nk+t$, $J$ be a subgroup  of $G$ of order $t$ and 
 $V$ be a half-set of $G\setminus J$. 
An $(nk,k)_t$ \emph{relative Heffter system} on $V$ is a partition  of $V$ into zero-sum parts, called \emph{blocks}, of size $k$.
\end{definition}
In this paper we focus on the case in which $G$ is a cyclic group and we will speak of a \emph{cyclic} relative Heffter system.
When $t=1$, the subscript is omitted and we acquire a classical $(nk,k)$ Heffter system.
\begin{definition}
 Two $(nk,k)_t$ relative Heffter systems $\mathcal{P}$ and $\mathcal{Q}$ on the same half-set are \emph{orthogonal} if every block of $\mathcal{P}$
intersects every block of $\mathcal{Q}$ in at most one element.
\end{definition}

\begin{example}\label{ex:Hsystem}
The set $V = \{-1,2,3,-4,5,-6,-7,8,-10,11,12,-13,14,-15,-16,17,-19,20,21,-22\}$ is a half-set of $\mathbb{Z}_{45}\backslash{J}$, where  $J$ is the subgroup of $\mathbb{Z}_{45}$ of order $5$. The following sets, $\mathcal{P}$ and $\mathcal{Q}$, are $(20,4)_5$ relative Heffter systems on $V$. 

\begin{center}
    \begin{tabular}{|c|}
        \hline
        $\mathcal{P}$ \\
        \hline
        $\{-1,2,3,-4\}$ \\
        \hline 
        $\{5,-6,-7,8\}$ \\
        \hline 
        $\{-10,11,12,-13\}$ \\
        \hline 
        $\{14,-15,-16,17\}$ \\
        \hline 
        $\{-19,20,21,-22\}$ \\
        \hline
    \end{tabular}
    \quad
    \begin{tabular}{|c|}
        \hline
        $\mathcal{Q}$ \\
        \hline 
        $\{-1,11,14,21\}$ \\
        \hline 
        $\{2,8,12,-22\}$ \\
        \hline 
        $\{3,-7,-13,17\}$ \\
        \hline 
        $\{-4,-6,-16,-19\}$ \\
        \hline 
        $\{5,-10,-15,20\}$ \\
        \hline
    \end{tabular}
    \end{center}

In fact, the sets $\mathcal{P}$ and $\mathcal{Q}$ are mutually orthogonal $(20,4)_{5}$ relative Heffter systems since their blocks intersect in at most one element.
\end{example}

An $(nk,k;r)_t$ \emph{relative Heffter space} is nothing more than a set of $r$ mutually orthogonal $(nk,k)_t$ relative Heffter systems.
In order to give a more formal definition we have to recall some concepts from classical design theory, see \cite{BJL}. A \emph{partial linear space} (PLS, for short) is a pair $(V, \mathcal{B})$ where $V$ is a set of points and $\mathcal{B}$ is a set of non-empty subsets
(called \emph{blocks} or \emph{lines}) of $V$ with the property that any two distinct points are contained together in at most one block. 
A PLS where every two distinct points are contained in exactly one block is said to be a \emph{linear space}. The \emph{degree} of a point of a PLS is the number of blocks containing that point. A PLS has \emph{degree} $r$ if all its points have the same degree $r$. A \emph{parallel class} of a PLS is a set of blocks partitioning the point set. A PLS is said to be \emph{resolvable} if there exists a partition of the block set (called a \emph{resolution}) into parallel classes. By a \emph{resolved} PLS we mean a resolvable PLS together with a specific resolution of it.
    We will focus on resolvable PLSs in which all blocks have the same size, these are known as \emph{configurations}; clearly a configuration with $v$ points, constant block size $k$ and degree $r$ has necessarily $b=\frac{vr}{k}$ blocks.

\begin{definition}
Given an abelian group $G$ and a subgroup $J$ of $G$,
a \emph{Heffter space over} $G$ \emph{relative to} $J$ is a resolved partial linear space whose parallel classes are mutually orthogonal relative Heffter
systems on a half-set of $G\setminus J$.
\end{definition}
When $G$ is a cyclic group, we will speak of a \emph{cyclic} relative Heffter space.
If $J$ is the trivial subgroup we find the concept of a Heffter space introduced in \cite{BP}.

Note that the degree of the space is nothing but the number of mutually orthogonal relative Heffter systems of the space. The aim of the research on this topic is to construct (relative) Heffter spaces with largest possible degree.
 In \cite{BP1, BP} the authors construct infinite classes of Heffter spaces with an arbitrary large degree. Other very recent results on Heffter spaces have been obtained in \cite{BP2}.
When the degree of the space is $2$, namely when we have only $2$ orthogonal (relative) Heffter systems, the (relative) Heffter space is in fact a square (relative) Heffter array (see \cite{A,CMPP}), which can be formally defined as follows.

\begin{definition}
Let $w = 2nk + t$ be a positive integer and let $J$ be the subgroup of $\mathbb{Z}_w$ of
order $t$. A $\H_t(n;k)$ \emph{Heffter array over} $\mathbb{Z}_w$ \emph{relative to} $J$ is an array of order $n$ with
elements in $\mathbb{Z}_w$ such that:
\begin{itemize}
    \item[\rm{(a)}] each row and each column has exactly $k$ filled cells;
    \item[\rm{(b)}] the entries form a half-set of $\mathbb{Z}_w\setminus J$;
    \item[\rm{(c)}] every row and every column is zero-sum in $\mathbb{Z}_w$.
\end{itemize}
\end{definition} 
 For instance the Heffter systems $\mathcal{P}$ and $\mathcal{Q}$ in Example \ref{ex:Hsystem} are the parallel classes of a $(20,4;2)_5$ relative Heffter space and can be displayed by the following $\H_5(5;4)$ Heffter array  over $\mathbb{Z}_{45}$. 
 $$\begin{array}{|r|r|r|r|r|r|r|} 
        \hline 
        -1 & 2 & 3 & -4 & \\
        \hline 
         & 8 & -7 & -6 & 5 \\
        \hline 
        14 & & 17 & -16 & -15 \\
        \hline 
        21 & -22 & & -19 & 20 \\
        \hline 
        11 & 12 & -13 & & -10 \\
        \hline 
 \end{array}$$
If $t=1$, the subscript notation is omitted, and we find again the concept of a square Heffter array introduced by Archdeacon in \cite{A}, whose existence has been completely established, in fact it is known that there exists a $\H(n;k)$ if and only if $n\geq k \geq3$, see \cite{ADDY,CDDY,DW}.
On the other hand, when $t>1$ the existence problem is still largely open, partial results can be found in \cite{CMPP,CPP,MT,MP1,MP3}.
We point out that in the definitions proposed in \cite{A,CMPP} the elements of the array belong to a cyclic group, but more generally one can consider a Heffter array  with entries in an abelian group; for variants and generalizations of classical Heffter arrays see \cite{PD}.

As explained in \cite{BP}, a Heffter space is more interesting the closer it is to a linear space, a good parameter to measure this distance is the so-called density of the space. The \textit{density} $\delta$ of a $(nk,k;r)_t$ relative Heffter space is defined as the density of the collinear graph associated to the space and, reasoning as in \cite{BP}, one can find that $\delta = \frac{r(k-1)}{nk-1}$. The space is linear if and only if $\delta= 1$. Here we focus on relative Heffter spaces having high density and which are \emph{simple}.
One of the motivations for studying simple relative Heffter spaces is that, starting from the blocks of such a space, one can construct a set of mutually orthogonal cycle decompositions of the complete multipartite graph, as explained in Section \ref{sec:decomposition}.
 We say that a $k$-subset $B$ of an abelian group $G$ is \emph{simple} if there exists an ordering $\{b_0,b_1,\ldots,b_{k-1}\}$ of the elements of $B$ such that the $k$-sequence of its partial sums $(c_0,c_1,\ldots,c_{k-1})$, where $c_i = \sum\limits_{j=0}^i b_j$ for $0 \leq i \leq k-1$, does not have any repeated elements. We say that a relative Heffter space is \emph{simple} if each of its blocks  admits a simple ordering.  
Note that since the blocks of a Heffter system sum to $0$, the Heffter system is simple if and only if we can order the elements of the blocks in such a way that no subsequence sums to $0$. 

In this paper we firstly present some preliminary results which allow us to construct, in Section \ref{sec:main}, two infinite classes of simple relative Heffter spaces (see Theorems \ref{thm:hs1} and \ref{thm:hs2}), one of which always achieves the maximal density. Then, in Section \ref{sec:HA}, we get, as a consequence, two new infinite classes of relative Heffter arrays (see Theorems \ref{thm:array1} and \ref{thm:array2}) satisfying the very strong additional condition of being globally simple.
Finally, in the last section, we present new constructive results regarding sets of mutually orthogonal cyclic cycle decompositions of the complete multipartite graph and  biembeddings of these decompositions into an orientable surface.

\section{Preliminary Results}\label{sec:preliminary}
In this section we record some preliminary results that will be used to identify relative Heffter space constructions in Section \ref{sec:main}. 
Given two integers $a$ and $b$, by $[a,b]$ we denote the set $\{a,a+1,\ldots, b\}$ if $a\leq b$, while $[a,b]$ is empty if $a>b.$
Also, given a subset $S$ of $\mathbb{Z}_v$, by $\sum S$ we denote the sum of all the elements in $S$.
Firstly we prove an existence result on simple zero-sum half-sets in a cyclic group. 

\begin{proposition}\label{JMP Proposition}
        Let $k \geq 3$ be an integer. Then there exists a zero-sum half-set of $\mathbb{Z}_{2k+1}$ admitting a simple ordering. 
\end{proposition}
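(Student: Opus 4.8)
The plan is to turn the statement into a purely combinatorial labeling problem and then build the required object explicitly. First I would unwind the definitions: a half-set of $\mathbb{Z}_{2k+1}$ (the case $t=1$) is exactly a transversal of the pairs $\{x,-x\}$, i.e.\ a choice of one of $i,-i$ for each $i\in\{1,\dots,k\}$, and an ordering $b_0,\dots,b_{k-1}$ is \emph{simple} precisely when its partial sums $c_0,\dots,c_{k-1}$ are pairwise distinct; zero-sum means $c_{k-1}=0$. Setting $s_0=0$ and $s_{j}=c_{j-1}$ for $1\le j\le k-1$, the distinctness of the $c_i$ is equivalent to the distinctness of the $k$ elements $s_0=0,s_1,\dots,s_{k-1}$, while the $k$ cyclic differences $s_{j+1}-s_j$ (indices mod $k$, with $s_0-s_{k-1}$ closing the loop) are exactly $b_0,\dots,b_{k-1}$. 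Hence producing a simple zero-sum half-set is \emph{equivalent} to producing $k$ distinct elements $0=s_0,s_1,\dots,s_{k-1}$ of $\mathbb{Z}_{2k+1}$, cyclically arranged, whose $k$ consecutive differences have magnitudes $1,2,\dots,k$ each exactly once (the zero-sum condition is then automatic, since the differences telescope around the cycle). This is a graceful-type, or $\rho$-labeling, of the $k$-cycle inside $\mathbb{Z}_{2k+1}$; equivalently one is asking for a cyclic $C_k$-decomposition of $K_{2k+1}$.

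With the reformulation in hand I would split on the residue of $k$ modulo $4$. For $k\equiv 0,3\pmod 4$ one can keep all labels inside $\{0,1,\dots,k\}$, that is, use a classical \emph{graceful} labeling of $C_k$; these are given by an explicit zig-zag pattern of the form $0,k,1,k-1,2,k-2,\dots$ (with the standard small corrections), and the signed consecutive differences read off directly as the ordered half-set. A quick verification that the magnitudes exhaust $\{1,\dots,k\}$ and that the labels are distinct then finishes these cases.

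The cases $k\equiv 1,2\pmod 4$ are where I expect the real difficulty. A parity count on the signed sum $\pm1\pm2\cdots\pm k$ shows that the differences cannot telescope to $0$ with all labels confined to $\{0,\dots,k\}$ — this is exactly the classical obstruction to $C_k$ being graceful for these residues — so I must exploit the extra room in the full group $\mathbb{Z}_{2k+1}$. The idea is to realize the length-$k$ difference by a ``wrap-around'' to a label in the upper range $\{k+1,\dots,2k\}$ (a negative residue), for instance by taking a graceful labeling of the path $P_k$ and closing the cycle through such a vertex. The crux of the whole proof is that inserting this long edge must not collide with the existing labels: I would have to check that after the modification the $k$ labels remain pairwise distinct and that no magnitude is repeated, which is the delicate bookkeeping step and likely forces a separate argument for the two residues and possibly a shift in how the intermediate labels are laid out.

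Finally, in every case I would verify the three properties on the constructed sequence: the $s_j$ are distinct (so the partial sums are distinct, the ordering is simple, and in particular $0$ occurs only as the final partial sum, so no consecutive sub-block sums to $0$); the differences hit each magnitude in $\{1,\dots,k\}$ exactly once (half-set); and their sum is $0$ (zero-sum). Since the explicit formulas typically require $k$ beyond a small threshold, I would dispatch the base cases $k=3,4,5$ by hand, using for instance the half-set $\{1,2,-3\}$ of $\mathbb{Z}_7$ with ordering $(1,2,-3)$ and partial sums $(1,3,0)$, and analogous small examples, to seed the induction-free verification.
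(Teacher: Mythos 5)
Your reformulation is correct and is a genuinely different, illuminating framing compared with the paper's proof: setting $s_0=0$ and $s_j=c_{j-1}$, a simple zero-sum half-set of $\mathbb{Z}_{2k+1}$ is exactly a $\rho$-labeling of the cycle $C_k$, equivalently a cyclic $C_k$-decomposition of $K_{2k+1}$, and the parity obstruction you cite (any signed sum $\pm1\pm2\cdots\pm k$ has the parity of $k(k+1)/2$, which is odd precisely when $k\equiv 1,2\pmod 4$) correctly explains why in those residues the labels cannot stay inside $\{0,\ldots,k\}$. The paper never uses this language, but its four case constructions are exactly such labelings written as difference sequences: for $k\equiv 0,3\pmod 4$ its half-sets are integer (graceful type), and for $k\equiv 1,2\pmod 4$ they sum to $\pm(2k+1)$ in $\mathbb{Z}$, i.e. wrap around the group exactly once, just as your plan prescribes.

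The gap is that for $k\equiv 1,2\pmod 4$ --- which you yourself call the crux --- you never actually produce the labeling. ``Take a graceful labeling of the path $P_k$ and close the cycle through a vertex in the upper range'' is not yet a construction: the endpoints of the standard zig-zag path labeling $0,k-1,1,k-2,\ldots$ differ by roughly $k/2$, not by $\pm k\pmod{2k+1}$, so the closing edge does not automatically realize the missing magnitude $k$, and the ``shift in how the intermediate labels are laid out'' that you defer is precisely the delicate content of these cases. Compare the paper: for $k\equiv 2\pmod 4$ its ordering $(-k,1,-2,3,-4,\ldots)$ places the wrap-around label $k+1$ immediately after $0$ and then re-zigzags, followed by an explicit check that all $k$ partial sums are distinct; an analogous explicit sequence and verification is given for $k\equiv 1\pmod 4$. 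As written, your two hard cases rest on ``I would have to check,'' so the argument is incomplete. The quickest legitimate repair inside your framework is to invoke Rosa's classical theorem that $C_k$ admits a $\rho$-labeling for every $k\ge 3$ (equivalently, that cyclic $k$-cycle systems of order $2k+1$ exist); failing that, you must supply and verify explicit sequences for $k\equiv 1,2\pmod 4$, which is exactly the bookkeeping the paper's proof carries out.
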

    
    \begin{proof}
    We divide the proof into cases, depending on the value of $k$ modulo $4$.
    
    \textbf{Case $k \equiv 1 \pmod{4}$.} If $k = 5$, it is immediate to check that $L = \{1,-2,3,4,5\}$ is a zero-sum half-set admitting a simple ordering in $\Z_{11}$.
For  $k \geq 9$ consider the following half-set of $\Z_{2k+1}$:
    \[
    \begin{aligned}
    L = &\{-1,-2,3\} \cup \left\{2i,-(2i+1)\mid i\in\left[2, \frac{k+3}{4}\right]\right\} \cup \\ 
    &\left\{-2i,2i+1 \mid i\in\left[\frac{k+7}{4}, \frac{k-3}{2}\right]\right\} \cup \{-(k-1),-k\}.
    \end{aligned}
    \]
    It can be easily seen that the sum of the elements in $L$ is $0$ modulo $2k+1$: the first bracket sums to zero, while the second and the third bracket add to $-\frac{k-1}{4}$ and $\frac{k-9}{4}$,  respectively, and the last one sums to $-2k+1$. A simple ordering of $L$ is then:
    \[
    \begin{aligned}
        \left(-k,-1,4,-5,6,-7,\dotsc, \frac{k+3}{2}, - \frac{k+5}{2}, -2,-(k-1), k-2, -(k-3), \dotsc, -\frac{k+7}{2},3 \right).  
        \end{aligned}
    \]
    Indeed, its partial sums are:
    \[
    \begin{aligned}
    (-k,-k-1,-k+3,-k-2, -k+4,-k-3,\dotsc, \frac{-3k+7}{4}, - \frac{5k+3}{4}, -\frac{5k+11}{4}, \\
    -\frac{k+3}{4},\frac{3k-11}{4},-\frac{k-1}{4},\dotsc, -3,0),
    \end{aligned}
    \]
    that are all distinct.

    \textbf{Case $k \equiv 2 \pmod{4}$.}  We construct the following half-set of $\Z_{2k+1}$:
     \[
     L = \left\{2i-1, -2i \mid i \in \left[1, \frac{k+2}{4}\right] \right\} \cup \left\{ -(2i-1),2i \mid i \in \left[\frac{k+6}{4}, \frac{k}{2}-1\right]\right\} \cup \{-(k-1),-k\}.
     \]
     It can be easily verified that $L$ has zero sum in $\Z_{2k+1}$; a simple ordering of $L$ is:
     \[
     \left(-k,1,-2,3,-4,\dotsc, \frac{k}{2}, -\frac{k+2}{2}, -(k-1),k-2,-(k-3) ,\dotsc, \frac{k}{2}+3, -\frac{k+4}{2}\right),
     \]
     with partial sums
     \[
     \left(-k,-k+1,-k-1,-k+2,-k-2, \dotsc, \frac{-3k+2}{4}, -\frac{5k+2}{4}, -\frac{k-6}{4}, \frac{3k-2}{4}, -\frac{k-10}{4}, \dotsc, \frac{k}{2}+2, 0\right)
     \]
     that are all distinct.

     \textbf{Case $k \equiv 3 \pmod{4}$.} 
     If $k=3$, then choose $L = \{1,2,-3\}$, where clearly any ordering of its elements is simple. Assume then that $k \geq 7$, and consider the following half-set $L$ of $\Z_{2k+1}$:
     \[
     L = \{1\}\cup \left\{2i,-(2i+1) \mid i \in \left[1,\frac{k+1}{4}\right]\right\} \cup \left\{-2i, 2i+1 \mid i \in\left[\frac{k+5}{4}, \frac{k-1}{2}\right]\right\}.
     \]
     Then, a simple ordering for $L$ is:
     \[
     \left(k,2,-3,4,-5,\dotsc, \frac{k+1}{2}, - \frac{k+3}{2}, -(k-1), k-2, -(k-3), \dotsc,\frac{k+7}{2},  -\frac{k+5}{2},1 \right)
     \]
     whose partial sums are:
     \[
     \left(k,k+2,k-1,k+3,k-2,\dotsc, \frac{5k+5}{4}, \frac{3k-1}{4}, -\frac{k-3}{4},\frac{3k-5}{4},\frac{-k+7}{4},\dotsc, \frac{k+3}{2},-1,0\right).
     \]
     \textbf{Case $k \equiv 0 \pmod{4}$.} Construct the half-set $L$ of $\Z_{2k+1}$ defined as:
     \[
    L = \left\{2i-1, -2i \mid i \in\left[ 1,\frac{k}{4}\right]\right\} \cup \left\{-(2i-1), 2i \mid i \in \left[\frac{k}{4}+1, \frac{k}{2}\right] \right\}.
     \]
     A simple ordering is then the following:
     \[
     \left(k,1,-2,3,-4,\dotsc, -\frac{k}{2}, -(k-1),k-2, -(k-3), \dotsc, \frac{k}{2}+2, -\left(\frac{k}{2}+1\right) \right)
     \]
     since its partial sums are:
     \[
     \left(k, k+1, k-1,k+2,k-2, \dotsc,\frac{3k}{4}, -\frac{k}{4}+1,\frac{3k}{4}-1, -\frac{k}{4}+2,\dotsc, \frac{k}{2}+1, 0  \right).
     \]
\end{proof}
We point out that  $L$ sums to zero in $\mathbb{Z}$ for $k\equiv 0,3 \pmod 4$. When this happens we will say that $L$ is an \emph{integer} half-set.
\begin{example}
    In this example we construct a simple ordering of a zero-sum half-set of $\Z_{2k+1}$, following the proof of Proposition \ref{JMP Proposition}, for $k \in [13,16]$.  Note that for each of the partial sums, we have chosen a representative of each congruence class that is in the range $[-k,k]$.
    \begin{footnotesize}
\begin{center}
    \begin{tabular}{ c l l} 
     $k$ & Simple ordering  & Partial sums \\ \hline
   $13$ &  $( -13,   -1,   4,   -5,    6,   -7,    8,   -9,   -2,  -12,   11,  -10,    3)$ & $( -13,   13,  -10,   12,   -9,   11,   -8,   10,    8,   -4,    7,   -3,    0)$ \\ 
   $14$ & $ ( -14,    1,   -2,    3,   -4,    5,   -6,    7,   -8,  -13,   12,  -11,   10,   -9)$
    &    $( -14,  -13,   14,  -12,   13,  -11,   12,  -10,   11,   -2,   10,   -1,    9,    0)$ \\ 
     $15$ & $( 15,    2,   -3,    4,  -5,    6,   -7,    8,   -9,  -14,   13,  -12,   11,  -10,   1)$& $(15,  -14,   14,  -13,   13,  -12,   12,  -11 ,  11,   -3,   10,   -2,    9,   -1,    0)$ \\ 
     $16$& $( 16,    1,   -2,    3,   -4,    5,   -6,    7,   -8,  -15,   14,  -13,   12,  -11,   10,   -9)$&
    $(16,  -16,   15,  -15,   14,  -14,   13, -13,   12,   -3,   11   -2,   10,   -1,    9,    0)$
    \end{tabular}
    \end{center}
    \end{footnotesize}
    
\end{example}    

Now we present two results about a particular collection of sequences. Firstly we consider the case of an odd-length sequence.  

\begin{lemma}\label{sequence lemma}
    Let $k \geq 3$ be an odd integer and $A = (a_0,\ldots,a_{k-1})$ be defined as follows: 
		$$a_i=\begin{cases}1 & {\rm if \ } i=0,k-1\\
		 -2 & {\rm if\ } i {\rm\ odd\ with\ } 1\leq i\leq k-2 \\
		 2 & {\rm if\ } i {\rm\ even \ with\ } 2\leq i\leq k-3. \end{cases}  $$
Then, for $i\in [0,k-1]$ set $A_i=(\alpha_{i,0},\ldots,\alpha_{i,k-1})$ where $\alpha_{i,j}=j\cdot a_{i+j}$ (all subscripts are considered modulo $k$).
It results:
    \begin{itemize}
        \item [\rm{(a)}]  $\sum A= \sum A_0=0$,
        \item [\rm{(b)}]  $\sum A_i=k$ if $i$ is odd, 
		\item [\rm{(c)}] $\sum A_i=-k$ if $i\geq2$ is even.
        \end{itemize}
\end{lemma}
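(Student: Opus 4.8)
The plan is to reduce all three statements to a single clean identity. Writing $S_i=\sum_{m=0}^{i-1}a_m$ for the $i$-th partial sum of $A$ (so $S_0=0$), I claim that
$\sum A_i = k\,S_i$ for every $i\in[0,k-1]$.
Granting this, part (a) is the case $i=0$ together with the separate fact $\sum A=0$, while parts (b) and (c) follow at once from the values of the $S_i$.

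First I would dispose of part (a) by direct computation. Counting the entries of $A$, there are $\frac{k-1}{2}$ odd indices in $[1,k-2]$ carrying the value $-2$ and $\frac{k-3}{2}$ even indices in $[2,k-3]$ carrying the value $2$, together with the two boundary entries $a_0=a_{k-1}=1$; summing gives $\sum A = 2-(k-1)+(k-3)=0$. For $\sum A_0=\sum_{j=0}^{k-1}j\,a_j$ I would invoke the closed forms $1+3+\cdots+(k-2)=\bigl(\tfrac{k-1}{2}\bigr)^2$ and $2+4+\cdots+(k-3)=\tfrac{k-3}{2}\cdot\tfrac{k-1}{2}$ for the two arithmetic progressions, obtaining $\sum A_0=-2\bigl(\tfrac{k-1}{2}\bigr)^2+\tfrac{(k-3)(k-1)}{2}+(k-1)=0$.

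The central step is the identity $\sum A_i=k\,S_i$. Starting from $\sum A_i=\sum_{j=0}^{k-1}j\,a_{(i+j)\bmod k}$, I would reindex by $m=(i+j)\bmod k$, so that $j=(m-i)\bmod k$ equals $m-i$ when $m\geq i$ and equals $m-i+k$ when $m<i$. This yields $\sum A_i=\sum_{m=0}^{k-1}(m-i)a_m+k\sum_{m=0}^{i-1}a_m=\sum A_0-i\,\sum A+k\,S_i$, and since $\sum A_0=\sum A=0$ by part (a), we get precisely $\sum A_i=k\,S_i$. Finally I would compute the partial sums by induction: $S_1=a_0=1$, and for $1\leq i\leq k-2$ the recursion $S_{i+1}=S_i+a_i$ alternates between $1$ and $-1$, since an odd index $i$ carries $a_i=-2$ (giving $S_{i+1}=1-2=-1$) and an even index $i\in[2,k-3]$ carries $a_i=2$ (giving $S_{i+1}=-1+2=1$). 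As $k$ is odd, this alternation runs cleanly up to $i=k-1$, so $S_i=1$ for odd $i$ and $S_i=-1$ for even $i\geq 2$; combining with $\sum A_i=k\,S_i$ gives (b) and (c).

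The only real obstacle is bookkeeping at the boundaries: the two exceptional entries $a_0=a_{k-1}=1$ interrupt the otherwise periodic $+2/-2$ pattern, so I would take care that the index ranges in the arithmetic-series counts of part (a) and in the inductive alternation are handled consistently, in particular near $i=k-1$, where the parity of $k$ must guarantee that the final step lands on $-1$ as required by (c).
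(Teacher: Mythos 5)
Your proof is correct, and it takes a genuinely different route from the paper's. The paper proves (b) and (c) by brute force: it pairs consecutive entries $\alpha_{i,j}+\alpha_{i,j+1}=\pm 2$ over index ranges that depend on where the wrap-around modulo $k$ falls, and then collects the leftover boundary terms, which forces separate treatments of $i=0$, $i=1$, odd $i\geq 3$, and even $i\geq 2$, each with its own bookkeeping. You instead prove the single structural identity $\sum A_i=\sum A_0 - i\sum A + k\,S_i$ (valid for \emph{any} length-$k$ sequence, by the reindexing $m=(i+j)\bmod k$, where the wrap-around contributes exactly $k\sum_{m<i}a_m$), so that after part (a) everything reduces to the alternation $S_i=1$ for $i$ odd, $S_i=-1$ for $i\geq 2$ even. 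Your boundary worry is in fact harmless: the recursion $S_{i+1}=S_i+a_i$ only consumes $a_1,\dots,a_{k-2}$, and since $k-2$ is odd, every even index it touches lies in $[2,k-3]$, so the exceptional entry $a_{k-1}=1$ never enters (it only appears in the consistency check $S_{k-1}+a_{k-1}=\sum A=0$). What the two approaches buy: the paper's argument is self-contained and verifiable term by term, but is case-heavy and error-prone; yours is shorter, handles all residues of $i$ uniformly, makes transparent \emph{why} each $\sum A_i$ is a multiple of $k$ (the wrap-around term), and isolates the only place where the specific shape of $A$ matters, namely in $\sum A=\sum A_0=0$ and in the values of the partial sums $S_i$.
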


\begin{proof}
    \begin{itemize}
        \item [(a)] Notice that the sequence $A$ comprises $2$ $1$s,  $\frac{k-1}{2}$ $-2$'s and $\frac{k-3}{2}$ $2$'s: it is therefore immediate that this sequence sums to $0$. Since for any $j$ even with $j \in [2, k-3]$ we have $a_{0,j}+a_{0,j+1}=ja_j+(j+1)a_{j+1}=2j-2(j+1)=-2$, then $\sum_{j=2}^{k-2}\alpha_{0,j}=-2\frac{k-3}{2}=-k+3$. Since $\alpha_{0,0}=0$, $\alpha_{0,1}=-2$,  $\alpha_{0,k-1}=k-1$, the thesis follows.

        \item [(b)] If $i=1$, for $j$ odd with $j\in [1,k-4]$ we have $\alpha_{1,j}+\alpha_{1,j+1}=2j-2(j+1)=-2$, hence $\sum_{j=1}^{k-3}a_{1,j} =-2\frac{k-3}{2}=-k+3$. Since $\alpha_{1,0}=0$, $\alpha_{1,k-2}=k-2$, and $\alpha_{1,k-1}=k-1$ the thesis follows. 
        
 Suppose now $i\geq3$ odd. If $j$ is odd with $j\in[1,k-i-3]$ then $\alpha_{i,j}+\alpha_{i,j+1}=ja_{i+j} + (j+1)a_{i+j+1} = 2j-2(j+1)=-2$ since $i+j$ is an even integer not exceeding $k-3$. Hence $\sum_{j=1}^{k-i-2}\alpha_{i,j}=-2\frac{k-i-2}{2}=-k+i+2$. While if $j$ is odd with $j\in[k-i+1,k-2]$ then $\alpha_{i,j}+\alpha_{i,j+1}=-2j+2(j+1)=2$ since $i+j\equiv 2\ell+1\pmod k$ with $\ell\in {[0,\frac{k-5}{2}]}$. Hence $\sum_{j=k-i+1}^{k-1}\alpha_{i,j}=2\frac{i-1}{2}=i-1$.
Note also that $\alpha_{i,0}=0$, $\alpha_{i,k-i-1}=k-i-1$ and $\alpha_{i,k-i}=k-i$.
Then $\sum A_i=(-k+i+2)+(i-1)+(k-i-1)+(k-i)=k$.

 \item [(c)] Suppose now $i\geq2$ even.  If $j$ is odd with $j\in[1,k-i-4]$ then $\alpha_{i,j}+\alpha_{i,j+1}=-2j+2(j+1)=2$ since $i+j$ is an odd integer not exceeding $k-4$. Hence $\sum_{j=1}^{k-i-3}\alpha_{i,j}=2\frac{k-i-3}{2}=k-i-3$. 
If $j$ is even with $j\in[k-i+1,k-3]$ then $\alpha_{i,j}+\alpha_{i,j+1} = -2j+2(j+1)=2$ since $i+j\equiv 2\ell+1\pmod k$ with $\ell\in {[0,\frac{k-4}{2}]}$. Hence $\sum_{j=k-i+1}^{k-2}\alpha_{i,j}=2\frac{i-2}{2}=i-2$.
Note also that $\alpha_{i,0}=0$, $\alpha_{i,k-i-2}=-2(k-i-2)$, $\alpha_{i,k-i-1}=k-i-1$, $\alpha_{i,k-i}=k-i$ and $\alpha_{i,k-1}=-2(k-1)$.
Then $\sum A_i=(k-i-3)-2(k-i-2)+(k-i-1)+(k-i)+(i-2)-2(k-1)=-k$.
        \end{itemize}
\end{proof}

\begin{example}
Take $k=7$ then:
\begin{center}
    \begin{tabular}{l l} 
$A = (1,-2,2,-2,2,-2,1)$ &\quad $A_3  =  (0,2,-4,3,4,-10,12)$\\
$A_0  =  (0,-2,4,-6,8,-10,6)$ &\quad $A_4 =  (0,-2,2,3,-8,10,-12)$\\
$A_1 = (0,2,-4,6,-8,5,6)$ &\quad $ A_5 = (0,1,2,-6,8,-10,12)$\\
$A_2 = (0,-2,4,-6,4,5,-12)$ &\quad  $A_6 = (0,1,-4,6,-8,10,-12)$\\
\end{tabular}
\end{center}
It is immediate to check that $\sum A=\sum A_0=0$, $\sum A_1=\sum A_3=\sum A_5=7$ and $\sum A_2=\sum A_4=\sum A_6=-7$.
\end{example}

Now we focus on a class of sequences of even length.

\begin{lemma} \label{lem:seq_k_even}
Let $k\geq 4$ be an even integer and $A = (a_0, \dotsc, a_{k-1})$ be defined as follows:
\begin{itemize}
	\item[\rm{(a)}] if  $k \equiv 0 \pmod{4}$
	\[a_i=\begin{cases}1 & {\rm if \ } i\equiv 0,3 \pmod{4},\\
		 -1 & {\rm if\ } i\equiv 1,2 \pmod{4}, \end{cases}
		 \]
		 \item[\rm{(b)}] if $k \equiv 2 \pmod{4}$
	\[a_i=\begin{cases}
		-2 & {\rm if \ } i=0,\\
		 2 & {\rm if \ } i=1,\\
		1 & {\rm if \ } i \in \{2,4\} {\rm  \ or \ } i\equiv 1,2 \pmod{4} {\rm \  and  \ } i\geq 6,\\
		 -1 &{\rm if \ } i \in \{3,5\} {\rm  \ or \ } i\equiv 0,3 \pmod{4} {\rm \  and  \ } i>6. \\
		 \end{cases}
		 \]
\end{itemize}
Then $\sum A=\sum_{i=0}^{k-1} ia_i= 0$.
\end{lemma}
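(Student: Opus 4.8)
The plan is to prove both identities, $\sum A = 0$ and $\sum_{i=0}^{k-1} i a_i = 0$, simultaneously by decomposing the index range into blocks of four consecutive positions and exploiting a single elementary observation. The key remark is that the length-four pattern $(1,-1,-1,1)$, when placed at any four consecutive indices $n, n+1, n+2, n+3$, annihilates both the plain sum and the index-weighted sum: indeed $1-1-1+1 = 0$, and $n\cdot 1 + (n+1)(-1) + (n+2)(-1) + (n+3)\cdot 1 = 0$ independently of the offset $n$. Everything reduces to recognizing where this pattern occurs and dealing with a short irregular initial segment in the second case.

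For Case (a), where $k \equiv 0 \pmod 4$, the defining rule makes $A$ literally the concatenation of $k/4$ copies of the block $(1,-1,-1,1)$, the first starting at index $0$ (one checks $i \equiv 0,1,2,3 \pmod 4$ against the two cases). Summing the per-block contributions and invoking the observation above gives $\sum A = 0$ and $\sum_i i a_i = 0$ at once; there is nothing further to verify.

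For Case (b), where $k \equiv 2 \pmod 4$ (so $k \geq 6$), I would first isolate the six initial terms $i = 0,\ldots,5$, whose values $(-2,2,1,-1,1,-1)$ can be summed by hand: both $\sum_{i=0}^{5} a_i = 0$ and $\sum_{i=0}^{5} i a_i = 0$, the latter because the contribution $+2$ of index $1$ is cancelled by the net $-2$ produced by indices $2$ through $5$. The remaining indices $i \geq 6$ again carry the pattern $(1,-1,-1,1)$, now aligned so that $i \equiv 2 \pmod 4$ opens a block; since $k - 6$ is a multiple of $4$ precisely because $k \equiv 2 \pmod 4$, the tail partitions exactly into $\frac{k-6}{4}$ complete blocks (none when $k = 6$), each contributing zero to both sums by the observation. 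Adding the vanishing initial segment to the vanishing tail yields both identities.

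The only genuinely delicate point is the bookkeeping in Case (b): confirming that the piecewise rule indeed produces $(1,-1,-1,1)$ from index $6$ onward by checking the residues $i \equiv 2,3,0,1 \pmod 4$ against the two threshold conditions ``$i \geq 6$'' and ``$i > 6$'', and that indices $6$ through $k-1$ split into whole blocks with no leftover. I expect this alignment and offset check, rather than any substantive computation, to be the main source of care.
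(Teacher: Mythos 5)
Your proof is correct and is essentially the paper's own argument: the paper likewise groups the indices into blocks of four contributing $4j-(4j+1)-(4j+2)+(4j+3)=0$ in case (a), and in case (b) isolates the initial segment (computing $2+2-3+4-5=0$) before summing over blocks $(4j+2,\dotsc,4j+5)$ for $j\ge 1$, which are exactly your blocks starting at index $6$. Your packaging of the block cancellation as an offset-independent property of the pattern $(1,-1,-1,1)$, used for both $\sum A$ and $\sum_i i a_i$ at once, is only a cosmetic difference.
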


\begin{proof}
From the definition of $A$, it is immediate to verify that $\sum A=0$. To check that  $\sum_{i=0}^{k-1} ia_i   = 0$, assume first that $k = 4 \ell$ for some $\ell \geq1$, hence: 
\[
\begin{aligned}
 \sum_{i=0}^{k-1} ia_i &= \sum_{j=0}^{\ell-1} (4j\,a_{4j} + (4j+1)\, a_{4j+1}+ (4j+2)\,a_{4j+2}+(4j+3)\,a_{4j+3}) \\
 & =  \sum_{j=0}^{\ell-1} (4j - (4j+1)- (4j+2)+(4j+3)) = 0.
 \end{aligned}
\]
If $k = 4\ell+2$ for some $\ell \geq 1$, then: 
 \[
\begin{aligned}
 \sum_{i=0}^{k-1} ia_i &= (1a_1+2a_2+\dotsc+5a_5)+ \sum_{j=1}^{\ell-1} ((4j+2)a_{4j+2}+(4j+3)a_{4j+3}+(4j+4)a_{4j+4}+(4j+5)a_{4j+5}) \\
 &= (2+2-3+4-5)+ \sum_{j=1}^{\ell-1} ((4j+2)-(4j+3)-(4j+4)+(4j+5))=0. \\
 \end{aligned}
\]
\end{proof}

We conclude this section by covering some group theoretical results. By ${\rm U}(\mathbb{Z}_w)$ we denote the group of units of the cyclic group $\mathbb{Z}_w$
of order $w$. Also, given $s \in \mathbb{Z}_w$, by $\langle{s}\rangle$ we mean the additive subgroup of $\mathbb{Z}_w$ generated by $s$.

\begin{lemma}\label{inverse cosets lemma}
    Let $s,a \in \mathbb{Z}_w$ with $a \not\in \langle{s}\rangle$. Then the additive inverse of every element of the additive coset $a + \langle{s}\rangle$ is contained within the additive coset $s-a+ \langle{s}\rangle$ and vice versa. Moreover $(a + \langle{s}\rangle) \cap (s-a + \langle{s}\rangle) = \emptyset$. 
\end{lemma}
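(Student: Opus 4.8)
The plan is to reduce everything to the elementary fact that two cosets of the subgroup $\langle s\rangle$ in the abelian group $\mathbb{Z}_w$ are either identical or disjoint, after which only two short computations and one divisibility argument remain. First I would dispose of the two ``inverse'' assertions by a direct calculation on a generic coset element. Every element of $a+\langle s\rangle$ has the shape $a+ms$ for some integer $m$, and
\[
-(a+ms) = (s-a)-(m+1)s,
\]
so its additive inverse lies in $s-a+\langle s\rangle$; symmetrically $-(s-a+ms)=a-(m+1)s\in a+\langle s\rangle$. Since negation is a bijection of $\mathbb{Z}_w$ and the two cosets have the same cardinality $|\langle s\rangle|$, this single computation already shows that negation maps one coset onto the other, which settles the statement together with its ``vice versa''.

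For the disjointness claim I would invoke the dichotomy for cosets: it suffices to rule out the equality $a+\langle s\rangle = s-a+\langle s\rangle$. This equality is equivalent to $a-(s-a)=2a-s\in\langle s\rangle$, and since $s\in\langle s\rangle$ it is in turn equivalent to $2a\in\langle s\rangle$. Thus the whole problem collapses to deducing $2a\notin\langle s\rangle$ from the hypothesis $a\notin\langle s\rangle$.

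This last implication is exactly where I expect the \textbf{main obstacle} to lie, because it is not formally automatic. Writing $d=\gcd(s,w)$, membership in $\langle s\rangle$ is divisibility by $d$, and the quotient $\mathbb{Z}_w/\langle s\rangle$ has order equal to the index $[\mathbb{Z}_w:\langle s\rangle]=d$. If $d$ is odd, then multiplication by $2$ is injective on this quotient, so $2a\in\langle s\rangle$ would force $a\in\langle s\rangle$, contradicting the hypothesis; equivalently, $d\mid 2a$ together with $\gcd(d,2)=1$ gives $d\mid a$. The argument therefore genuinely requires the oddness of $d=\gcd(s,w)$, which is guaranteed whenever $w$ is odd, the standing parity assumption in this setting. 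Without it the conclusion can fail: in $\mathbb{Z}_4$ with $s=2$ and $a=1$ one has $a\notin\langle s\rangle$, yet $2a\in\langle s\rangle$ and the cosets $\{1,3\}$ and $\{1,3\}$ coincide. Hence I would be careful to make the dependence on the parity of $w$ (or of the index) explicit at this step rather than treating the divisibility passage as routine.
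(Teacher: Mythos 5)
Your argument is correct, and it is in fact more careful than the paper itself: the paper states this lemma with no proof at all, treating it as routine, so there is no argument to compare yours against step by step. The first assertion (negation swaps the two cosets) is exactly the one-line computation you give. The real content is the disjointness claim, and here you have put your finger on a genuine defect of the statement as written: for arbitrary $w$ the ``Moreover'' part is false, and your counterexample is valid --- in $\mathbb{Z}_4$ with $s=2$, $a=1$ one has $a\notin\langle s\rangle$ yet $a+\langle s\rangle = s-a+\langle s\rangle=\{1,3\}$. The missing hypothesis, which you isolate precisely, is that the index $d=\gcd(s,w)$ of $\langle s\rangle$ in $\mathbb{Z}_w$ be odd, equivalently that multiplication by $2$ be injective on the quotient $\mathbb{Z}_w/\langle s\rangle$, so that $a\notin\langle s\rangle$ forces $2a\notin\langle s\rangle$ and the coset dichotomy yields disjointness. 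In the paper this hypothesis is always satisfied where the lemma is invoked: it is applied with $s=2k+1$ dividing $w=n(2k+1)$ and $n$ odd, so the index equals $2k+1$, which is odd (indeed $w$ itself is odd there). So the paper's uses of the lemma are sound, but its statement should carry the parity assumption that you make explicit; your writeup is the proof the paper should have included.
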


\begin{lemma}\label{expressing coset in terms of smaller coset lem}
    Let $w = kd(2k+1)$, then every element of the subgroup $\langle{2k+1}\rangle$ of $\mathbb{Z}_w$ can be expressed as a unique element of the form $(id + j)(2k+1)$, where $ i \in [0, k-1]$ and $j \in[0,d-1]$. 
\end{lemma}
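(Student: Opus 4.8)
The plan is to prove Lemma \ref{expressing coset in terms of smaller coset lem} by a straightforward counting-and-uniqueness argument, exploiting the chain of subgroups $\langle 2k+1\rangle \supseteq \langle d(2k+1)\rangle \supseteq \{0\}$ inside $\mathbb{Z}_w$. First I would observe that since $w = kd(2k+1)$, the element $2k+1$ has additive order $kd$ in $\mathbb{Z}_w$, so the subgroup $\langle 2k+1\rangle$ has exactly $kd$ elements, namely $\{m(2k+1) : m \in [0,kd-1]\}$, each arising from a unique residue $m \bmod kd$. The claim is that the map sending a pair $(i,j)$ with $i\in[0,k-1]$ and $j\in[0,d-1]$ to the element $(id+j)(2k+1)$ is a bijection onto $\langle 2k+1\rangle$.

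The key step is to verify that the integers $id+j$, as $(i,j)$ ranges over $[0,k-1]\times[0,d-1]$, are exactly the integers in $[0,kd-1]$, each occurring once. This is nothing more than the base-$d$ (Euclidean division) representation of an integer $m\in[0,kd-1]$: writing $m = id+j$ with $j$ the remainder and $i$ the quotient of division by $d$ forces $j\in[0,d-1]$ and $i\in[0,k-1]$, and this decomposition is unique. I would then lift this to $\mathbb{Z}_w$: since the elements $m(2k+1)$ for $m\in[0,kd-1]$ are pairwise distinct and exhaust $\langle 2k+1\rangle$, the corresponding elements $(id+j)(2k+1)$ are also pairwise distinct and exhaust the subgroup, giving both existence and uniqueness of the representation.

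For rigour on uniqueness at the group level, I would note that $(id+j)(2k+1) = (i'd+j')(2k+1)$ in $\mathbb{Z}_w$ is equivalent to $(id+j)\equiv (i'd+j') \pmod{kd}$, because $2k+1$ has order $kd$; since both $id+j$ and $i'd+j'$ lie in $[0,kd-1]$, this congruence forces genuine equality of the integers, whence $i=i'$ and $j=j'$ by uniqueness of Euclidean division. This handles the whole statement.

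I do not expect a genuine obstacle here: the result is essentially the statement that every residue modulo $kd$ has a unique mixed-radix representation $id+j$ with $i\in[0,k-1]$, $j\in[0,d-1]$, transported through the order-$kd$ cyclic subgroup $\langle 2k+1\rangle$. The only point requiring a little care is correctly computing that the order of $2k+1$ in $\mathbb{Z}_w$ is exactly $kd$ (i.e.\ that $\gcd(2k+1, w) = 2k+1$, which holds since $2k+1 \mid w$), so that the counting has the right size $kd = |\langle 2k+1\rangle|$ and the pigeonhole/bijection argument closes cleanly.
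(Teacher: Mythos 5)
Your proof is correct and is in substance the same as the paper's: the paper partitions $\langle 2k+1\rangle$ into the $d$ cosets $j(2k+1)+\langle d(2k+1)\rangle$ of $\langle d(2k+1)\rangle$ and writes each element of $\langle d(2k+1)\rangle$ as $id(2k+1)$ with $i\in[0,k-1]$, which is exactly your Euclidean-division decomposition $m=id+j$ expressed in coset language. If anything, your version is slightly more explicit, since you verify that $2k+1$ has additive order $kd$ and spell out the uniqueness step, which the paper leaves implicit in the counting of cosets.
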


\begin{proof}
    Observe that the additive subgroup $\langle{2k+1}\rangle$  of the group $\mathbb{Z}_w$ can be partitioned into $d$ cosets of the smaller additive subgroup $\langle{d(2k+1)}\rangle$ of $\mathbb{Z}_w$, which has cardinality $k$. More specifically, for each $j \in[0, d-1]$, the coset $j(2k+1) + \langle{d(2k+1)}\rangle$ of $\langle{d(2k+1)}\rangle$ is a subset   $\langle{2k+1}\rangle$. Notice that every element of the additive subgroup $\langle{d(2k+1)}\rangle$ may be written as $id(2k+1)$ for some $ i \in[0, k-1]$, therefore every element of the subgroup $\langle{2k+1}\rangle$ can be written in the form $(id+j)(2k+1)$, where $i \in [0, k-1]$ and $j \in [0,d-1]$.
\end{proof}

\section{Constructions of simple relative Heffter spaces}\label{sec:main}
Now we use the results of Section \ref{sec:preliminary} to construct two infinite classes of simple relative Heffter spaces. In both cases, the points of the space form a half-set of $\mathbb{Z}_{n(2k+1)}\setminus \langle{2k+1}\rangle$ for suitable choices of $n$ and $k$. We then show that when $n=k$ is a prime, the constructed Heffter spaces are as dense as possible.

As usual by $\Phi(k)$ we will denote Euler's totient function of a positive integer $k$.

\begin{proposition}\label{Heffter system 1}
Let $n$ be an odd integer and $k$ be a divisor of $n$.
Then there exist at least $\Phi(k)+1$ simple cyclic $(nk,k)_n$ relative Heffter systems.
\end{proposition}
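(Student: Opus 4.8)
The plan is to construct the $\Phi(k)+1$ systems explicitly inside $\Z_{n(2k+1)}$, working on the half-set of $\Z_{n(2k+1)}\setminus\langle 2k+1\rangle$, and to exploit the arithmetic structure of $\langle 2k+1\rangle$ together with the sequences built in the preliminary section. The key idea is that the elements outside $\langle 2k+1\rangle$ split naturally according to their residue modulo $2k+1$, so I would take as the point set a half-set $V$ consisting of elements whose nonzero residue mod $2k+1$ ranges over a zero-sum half-set $L$ of $\Z_{2k+1}$ (as furnished by Proposition \ref{JMP Proposition}, since $k\geq 3$ because $k\mid n$ and $n$ is odd forces $k$ odd; the case $k=1$ is degenerate). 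The "base" system, call it $\mathcal{P}_0$, would group these points into blocks of size $k$ that are translates within cosets of $\langle 2k+1\rangle$, each block being zero-sum because its residues come from the zero-sum set $L$ and its $\langle 2k+1\rangle$-components cancel.

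Next I would produce the remaining $\Phi(k)$ systems indexed by the units $u\in{\rm U}(\Z_k)$, which number exactly $\Phi(k)$. For each such unit I would define a Heffter system $\mathcal{P}_u$ by permuting the $\langle 2k+1\rangle$-parts of the blocks according to multiplication by $u$, using Lemma \ref{expressing coset in terms of smaller coset lem} (with $d=n/k$) to index each element of $\langle 2k+1\rangle$ uniquely as $(id+j)(2k+1)$ and then acting on the $i$-coordinate by $u$. The zero-sum condition for the blocks of $\mathcal{P}_u$ should follow from the same cancellation as before, since multiplying the running index by a unit permutes the indices without changing the fact that a full set of representatives sums to zero in the relevant subgroup; this is where I expect Lemma \ref{sequence lemma} to enter, as its sequences $A_i$ encode exactly how the scaled partial indices distribute, and its sum formulas (parts (a)--(c)) are what guarantee the components sum correctly modulo $n(2k+1)$.

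The orthogonality of any two systems $\mathcal{P}_u,\mathcal{P}_{u'}$ would reduce to checking that two blocks, one from each, share at most one point; since a point is determined by its residue mod $2k+1$ together with its index in $\langle 2k+1\rangle$, and the two systems differ by multiplication by $uu'^{-1}\neq 1$ on the index, a shared point forces a linear condition $(u-u')i\equiv \text{const}\pmod k$ which a unit difference solves uniquely, yielding at most one common element. This is the standard "affine line" orthogonality argument and I would phrase it via Lemma \ref{inverse cosets lemma} to keep the additive-inverse pairing consistent across cosets. Finally, simplicity of each block is inherited from the simple ordering of $L$ guaranteed by Proposition \ref{JMP Proposition}: a simple ordering of the residues lifts to a simple ordering of the block because the $\langle 2k+1\rangle$-components only shift the partial sums within distinct cosets, so no two partial sums can collide.

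The main obstacle I anticipate is verifying the zero-sum and orthogonality conditions \emph{simultaneously} for all units after the scaling by $u$: one must confirm that multiplying the subgroup-index by a unit preserves both the block partition (each block still has size $k$ and the blocks still partition $V$) and the vanishing of the total sum, which depends delicately on the parity bookkeeping in Lemma \ref{sequence lemma} and on the fact that $k\mid n$. Getting the indices and the modular reductions to line up — especially keeping track of when the sum of scaled indices is $0$, $k$, or $-k$ as in parts (a)--(c) of that lemma — will be the computationally sensitive heart of the argument; everything else is bookkeeping around Lemma \ref{expressing coset in terms of smaller coset lem}.
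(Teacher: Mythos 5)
Your setup matches the paper's: the half-set $V$ whose residues modulo $2k+1$ form the zero-sum half-set $L$ of Proposition \ref{JMP Proposition}, a base system $\mathcal{P}_0$, and the simplicity argument via partial sums being nonzero modulo $2k+1$ are all exactly the paper's ingredients. The genuine gap is in how you produce the other $\Phi(k)$ systems. You propose to obtain $\mathcal{P}_u$ from $\mathcal{P}_0$ by the pointwise map that writes the subgroup part of each element as $(id+j)(2k+1)$ and replaces $i$ by $ui \bmod k$. This fails for two reasons. First, when $k=n$ (so $d=1$, a case the proposition must cover, e.g.\ for Corollary \ref{cor:p2,p,p}), this map sends the block $\{a_m i(2k+1)+\ell_m \mid m\}$ of $\mathcal{P}_0$ exactly onto the block indexed by $ui \bmod k$, so it merely permutes the blocks of $\mathcal{P}_0$ and every $\mathcal{P}_u$ coincides with $\mathcal{P}_0$: you get one system, not $\Phi(k)+1$. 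Second, when $d>1$ the map acting on the $i$-coordinate is not additive on $\langle 2k+1\rangle\cong\Z_n$, so it does not preserve zero sums. Concretely, take $k=3$, $d=3$ (so $w=63$), $A=(1,-2,1)$, $L=\{1,2,-3\}$: the block $\{8,51,4\}$ of $\mathcal{P}_0$ (coming from $id+j=1$) is zero-sum, but its subgroup parts $7,49,7$ have coordinates $(0,1),(2,1),(0,1)$, and applying $u=2$ to the $i$-coordinates yields $\{8,30,4\}$, which sums to $42\not\equiv 0 \pmod{63}$. Your justification (``a full set of representatives sums to zero'') rests on a misconception: the subgroup parts of a block of $\mathcal{P}_0$ are not a set of coset representatives, nor are the blocks plain translates of $L$ (a translate $L+g$ sums to $kg\neq 0$ in general); they are the multiples $a_m\cdot(id+j)(2k+1)$ of a single subgroup element, and they cancel only because $\sum_m a_m=0$.

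What the paper does instead, and what your plan is missing, is a coset-dependent shear rather than a uniform relabeling: in the system $\mathcal{P}_s$ the element taken from the coset $\ell_m+\langle 2k+1\rangle$ by a given block has subgroup index $(m-i)ds+j$ (times $a_m$), i.e.\ the index varies \emph{linearly with the coset index} $m$, with slope $s$. The zero-sum condition for such a block then becomes exactly $\sum_m m\,a_{m+i}\equiv 0 \pmod{k}$, which is what parts (b) and (c) of Lemma \ref{sequence lemma} (the sums $\sum A_i\in\{0,k,-k\}$) provide. You correctly anticipated that this lemma would be the delicate heart of the argument, but your construction cannot invoke it: under an action on the $i$-coordinate applied identically across all cosets, the weighted sums $\sum_m m\,a_{m+i}$ never arise, and no such uniform pointwise transformation of the subgroup components can turn $\mathcal{P}_0$ into a genuinely new relative Heffter system.
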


\begin{proof}
Set $n=kd$ and $w = n(2k+1)$. Let $A = (a_0,\ldots,a_{k-1})$ be the zero-sum sequence in $\mathbb{Z}_{2k+1}$ constructed in Lemma \ref{sequence lemma}. 
By Proposition \ref{JMP Proposition} there exists a zero-sum half-set, say $L'$, of $\mathbb{Z}_{2k+1}$ with a simple 
ordering, say $\omega'=(\ell'_0,\ell'_1,\ldots,\ell'_{k-1})$.
Note that the sum of $L'$ is of the form $\alpha(2k+1)$, where $\alpha\in[-1,1]$.  
Set $\omega=(\ell_0=\ell'_0-\alpha(2k+1),\ell_1=\ell'_1,\ldots,\ell_{k-1}=\ell'_{k-1})$,
then $\omega$ is a simple ordering of the half-set $L=\{\ell_0,\ell_1,\ldots,\ell_{k-1}\}$ of $\mathbb{Z}_{2k+1}$ which clearly is integer.
Now set $\mathcal{P}_0 = \{B_{i,j,0} \mid i\in[0,k-1], j\in[0, d-1]\}$, where
    \begin{gather*}
    B_{i,j,0} := \{a_0(id+j)(2k+1)+\ell_0,a_1(id+j)(2k+1)+\ell_1,a_2(id+j)(2k+1)+\ell_2,\ldots,\\
    a_{k-3}(id+j)(2k+1)+\ell_{k-3},a_{k-2}(id+j)(2k+1)+\ell_{k-2},a_{k-1}(id+j)(2k+1)+\ell_{k-1}\}.
    \end{gather*} 
    We will show that $\mathcal{P}_0 $ is a simple  Heffter system in $\mathbb{Z}_{w}$ relative to $\langle{2k+1}\rangle$.
    We start by proving that each block $B_{i,j,0}$ sums to zero in $\mathbb{Z}_w$. Note that
$$\sum B_{i,j,0}=\sum_{m=0}^{k-1} \left(a_m(id+j)(2k+1)+\ell_{m}\right)=(id+j)(2k+1) \sum A +\sum L=0.$$

   Now we prove that the blocks of $\mathcal{P}_0$ partition a half-set of $\mathbb{Z}_w\backslash\langle{2k+1}\rangle$. Notice that we can partition the set $\mathbb{Z}_w\backslash\langle{2k+1}\rangle$ into $2k$ non-trivial cosets of the additive subgroup $\langle{2k+1}\rangle$. It follows from Lemma \ref{inverse cosets lemma} that for each $a\in[1,k]$ the additive inverses of the elements contained within the coset $a + \langle{2k + 1}\rangle$ are all contained within the coset $2k+1-a + \langle{2k+1}\rangle$, therefore, if we can prove that $\mathcal{P}_0$ either contains a copy of the coset $a + \langle{2k + 1}\rangle$ or $2k+1-a + \langle{2k + 1}\rangle$ for all $a\in[1,k]$, then it follows that $\mathcal{P}_0$ is a partition of a half-set of $\mathbb{Z}_v\backslash\langle{2k+1}\rangle$. Since each element of L is either in the coset $a+\langle{2k+1}\rangle$ or $2k+1-a+\langle{2k+1}\rangle$ for each $a \in [1,k]$, and by Lemma \ref{expressing coset in terms of smaller coset lem} each unique element of the coset $\langle{2k+1}\rangle$ may be expressed $(id+j)(2k+1)$ for some $i \in [0,k-1]$ and $j \in [0,d-1]$, it is immediate that $\mathcal{P}_0$ is a relative Heffter system of $\mathbb{Z}_v\backslash\langle{2k+1}\rangle$.

It remains to demonstrate that the Heffter system $\mathcal{P}_0$ is simple. To see this, let $\omega' = (a_0(id+j)(2k+1) + \ell_0,\ldots,a_{k-1}(id+j)(2k+1)+\ell_{k-1})$ be an ordering of the elements of an arbitrary block $B_{i,j,0}$ of $\mathcal{P}_0$, where $i \in [0,k-1]$ and $j\in[0,d-1]$. Observe that since the ordering $\omega = (\ell_0,\ldots,\ell_{k-1})$ of $L$ is simple in the group $\mathbb{Z}_{2k+1}$, none of the partial sums of $\omega$ sum to $0$ modulo $2k+1$. It then follows that, since all multiples of $2k+1$ reduce to $0$ modulo $2k+1$, the partial sums of  $\omega' = (a_0(id+j)(2k+1) + \ell_0,\ldots,a_{k-1}(id+j)(2k+1)+\ell_{k-1})$ will also not sum to $0$ modulo $2k+1$ and hence the partial sums of $\omega'$ will not sum to $0$ modulo $kd(2k+1)$. It therefore follows that $\mathcal{P}_0$ is a simple Heffter system.

For any $s \in {\rm U}(\mathbb{Z}_{2k+1})$ define now $\mathcal{P}_s = \{B_{i,j,s} \mid i\in[0,k-1],j\in[0,d-1] \}$ where
    \begin{gather*}
        B_{i,j,s} = \{a_{i}j(2k+1) + \ell_{i},a_{1+i}(ds+j)(2k+1) + \ell_{1+i},
        a_{2+i}(2ds+j)(2k+1) + \ell_{2+i},\\\ldots,a_{k-2+i}((k-2)ds+j)(2k+1)+\ell_{k-2+i},a_{k-1+i}((k-1)ds+j)(2k+1) +\ell_{k-1+i}\} 
    \end{gather*}
and all subscripts are considered modulo $k$.

We prove that each $\mathcal{P}_s$ is a simple Heffter system in $\mathbb{Z}_{w}$ relative to $\langle{2k+1}\rangle$. Notice that the elements of each block $B_{i,j,s} \in \mathcal{P}_s$ sum to
    $$\sum_{m=0}^{k-1} [a_{m+i}(mds+j)(2k+1)+\ell_{m+i}]= (2k+1)\sum_{m=0}^{k-1} a_{m+i}(mds+j)+ \sum_{m=0}^{k-1} \ell_{m+i}=$$  
 $$ (2k+1)ds\sum_{m=0}^{k-1} ma_{m+i}+j(2k+1)\sum_{m=0}^{k-1} a_{m+i}+\sum L.$$
Since $\sum_{m=0}^{k-1} a_{m+i}=\sum A=0$ and $\sum L=0$ we have
$$\sum  B_{i,j,s}=(2k+1)ds\sum_{m=0}^{k-1} ma_{m+i}.$$
Notice that $\sum_{m=0}^{k-1} ma_{m+i}=\sum A_i$, where $A_i$ is the sequence defined in Lemma \ref{sequence lemma}, is $0, k$ or $-k$.
In each case we get $\sum  B_{i,j,s}\equiv 0 \pmod{kd(2k+1)}$, that is $B_{i,j,s}$ sums to zero in $\mathbb{Z}_w$.
Now it just remains to prove that for any $s \in {\rm U}(\mathbb{Z}_{2k+1})$, the elements of the blocks of $\mathcal{P}_s$ form a half-set of $\mathbb{Z}_w\backslash\langle{2k+1}\rangle$. 
Previously it was demonstrated that each element of the coset $\ell_m + \langle{2k+1}\rangle$ may be uniquely expressed by $\ell_m + a_m(id+j)(2k+1)$, where $a_m \in A$ is fixed, $i \in[0, k-1]$ and $j\in[0,d-1]$. Since $s \in {\rm U}(\mathbb{Z}_{2k+1})$, therefore it follows that each element of the coset $\ell_m + \langle{2k+1}\rangle$ can also be expressed by $\ell_m + a_m(ids+j)(2k+1)$. This means that every block $B_{i,j,s} \in\mathcal{P}_s$ contains precisely one element of each coset $\ell_m + a_m(ids+j)(2k+1)$, where $\ell_m$ is a member of the half-set $L$ of $\mathbb{Z}_{2k+1}$, hence the elements of the blocks of $\mathcal{P}_s$ form a half-set of $\mathbb{Z}_w\backslash\langle{2k+1}\rangle$. 
    Finally, reasoning as done for $\mathcal{P}_0$, one can prove that for any $s$ in ${\rm U}(\mathbb{Z}_{2k+1})$
the Heffter system $\mathcal{P}_s$ is simple.

Since $\Phi(k)=|U(\mathbb{Z}_{2k+1})|$, we have the thesis.
\end{proof}

\begin{example}\label{ex:n15k5}
    Let $n=15$ and $k=5$, then $d=3$, $2k+1 = 11$ and $w = 165$. For $k=5$ we have $A=(1,-2,2,-2,1)$ and  $\omega=(-10,-2,3,4,5)$.  Following the proof of Proposition \ref{Heffter system 1}, we get the five simple cyclic $(75,5)_{15}$ relative Heffter systems $\mathcal{P}_0,\ldots,\mathcal{P}_4$ listed below:
    \begin{center}
        \begin{tabular}{|c|c|c|c|c|}
            \hline 
             & $\mathcal{P}_0$ & $\mathcal{P}_1$ & $\mathcal{P}_2$  \\
           \hline
           $B_{0,0,s}$ & $\{-10,-2,3,4,5\}$ & $\{-10,-68,-30,-29,-28\}$ & $\{-10,31,-63,-62,-61\}$ \\
           \hline
            $B_{1,0,s}$ & $\{23,-68,69,-62,38\}$ & $\{-2,69,37,-61,-43\}$ & $\{-2,-30,70,38,-76\}$ \\
            \hline
            $B_{2,0,s}$ & $\{56,31,-30,37,71\}$ & $\{3,-62,71,-76,64\}$ & $\{3,37,-28,23,-35\}$ \\
            \hline
           $B_{3,0,s}$ & $\{-76,-35,36,-29,-61\}$ & $\{4,38,56,-35,-63\}$ & $\{4,71,-43,-68,36\}$ \\
            \hline
            $B_{4,0,s}$ & $\{-43,64,-63,70,-28\}$ & $\{5,23,31,36,70\}$ & $\{5,56,64,69,-29\}$ \\
           \hline 
           $B_{0,1,s}$ & $\{1,-24,25,-18,16\}$ & $\{1,75,-8,-51,-17\}$ & $\{1,9,-41,81,-50\}$ \\
           \hline
           $B_{1,1,s}$ & $\{34,75,-74,81,49\}$ & $\{-24,-74,15,-50,-32\}$ & $\{-24,-8,48,49,-65\}$ \\
           \hline
            $B_{2,1,s}$ & $\{67,9,-8,15,82\}$ & $\{25,81,82,-65,42\}$ & $\{25,15,-17,34,-57\}$ \\
           \hline
            $B_{3,1,s}$ & $\{-65,-57,58,-51,-50\}$ & $\{-18,49,67,-57,-41\}$ & $\{-18,82,-32,75,58\}$ \\
           \hline
            $B_{4,1,s}$ & $\{-32,42,-41,48,-17\}$ & $\{16,34,9,58,48\}$ & $\{16,67,42,-74,-51\}$ \\
           \hline
            $B_{0,2,s}$ & $\{12,-46,47,-40,27\}$ & $\{12,53,14,-73,-6\}$ & $\{12,-13,-19,59,-39\}$ \\
           \hline
            $B_{1,2,s}$ & $\{45,53,-52,59,60\}$ & $\{-46,-52,-7,-39,-21\}$ & $\{-46,14,26,60,-54\}$ \\
           \hline
            $B_{2,2,s}$ & $\{78,-13,14,-7,-72\}$ & $\{47,59,-72,-54,20\}$ & $\{47,-7,-6,45,-79\}$ \\
           \hline
            $B_{3,2,s}$ & $\{-54,-79,80,-73,-39\}$ & $\{-40,60,78,-79,-19\}$ & $\{-40,-72,-21,53,80\}$ \\
           \hline 
            $B_{4,2,s}$ & $\{-21,20,-19,26,-6\}$ & $\{27,45,-13,80,26\}$ & $\{27,78,20,-52,-73\}$ \\
            \hline
        \end{tabular}
    \end{center}
   \begin{center}
        \begin{tabular}{|c|c|c|}
           \hline
             & $\mathcal{P}_3$ & $\mathcal{P}_4$ \\
           \hline
            $B_{0,0,s}$ & $\{-10,-35,69,70,71\}$ & $\{-10,64,36,37,38\}$ \\
            \hline 
            $B_{1,0,s}$ & $\{-2,36,-62,-28,56\}$ & $\{-2,-63,-29,71,23\}$ \\ 
           \hline 
            $B_{2,0,s}$ & $\{3,-29,38,-43,31\}$ & $\{3,70,-61,56,-68\}$ \\
            \hline
            $B_{3,0,s}$ & $\{4,-61,23,64,-30\}$ & $\{4,-28,-76,31,69\}$ \\
           \hline 
            $B_{4,0,s}$ & $\{5,-76,-68,-63,37\}$ & $\{5,-43,-35,-30,-62\}$ \\
            \hline 
            $B_{0,1,s}$ & $\{1,-57,-74,48,82\}$ & $\{1,42,58,15,49\}$ \\
            \hline
            $B_{1,1,s}$ & $\{-24,58,81,-17,67\}$ & $\{-24,-41,-51,82,34\}$ \\
            \hline
            $B_{2,1,s}$ & $\{25,-51,49,-32,9\}$ & $\{25,48,-50,67,75\}$ \\
            \hline
            $B_{3,1,s}$ & $\{-18,-50,34,42,-8\}$ & $\{-18,-17,-65,9,-74\}$ \\
            \hline
            $B_{4,1,s}$ & $\{16,-65,75,-41,15\}$ & $\{16,-32,-57,-8,81\}$ \\
            \hline
            $B_{0,2,s}$ & $\{12,-79,-52,26,-72\}$ & $\{12,20,80,-7,60\}$ \\
            \hline
            $B_{1,2,s}$ & $\{-46,80,59,-6,78\}$ & $\{-46,-19,-73,-72,45\}$ \\
            \hline
            $B_{2,2,s}$ & $\{47,-73,60,-21,-13\}$ & $\{47,26,-39,78,53\}$ \\
            \hline
            $B_{3,2,s}$ & $\{-40,-39,45,20,14\}$ & $\{-40,-6,-54,-13,-52\}$ \\
            \hline 
            $B_{4,2,s}$ & $\{27,-54,53,-19,-7\}$ & $\{27,-21,-79,14,59\}$ \\
            \hline
        \end{tabular}
    \end{center}

One can directly check that each $B_{i,j,s}$ sums to zero modulo $165$, that the elements of the blocks of $\mathcal{P}_{m}$, for $m \in[0,4]$, form a half-set of $\mathbb{Z}_{165}\setminus \langle11\rangle$. Also, since $k=5$, it is trivial that the partial sums of each block are pairwise distinct. Hence each $\mathcal{P}_i$ is a simple Heffter system in $\mathbb{Z}_{165}$ relative to $\langle11\rangle$.
\end{example}
We now show that the simple relative Heffter systems constructed in Proposition \ref{Heffter system 1} are mutually orthogonal, namely that can be viewed as the parallel classes of a Heffter space.

\begin{theorem}\label{thm:hs1}
Let $n$ be an odd integer and let $k$ be a divisor of $n$. Then there exists a simple cyclic  $(nk,k;\Phi(k)+1)_n$ relative Heffter space.

\end{theorem}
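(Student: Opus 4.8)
The plan is to take the $\Phi(k)+1$ simple relative Heffter systems produced in Proposition~\ref{Heffter system 1} --- namely $\mathcal{P}_0$ together with the systems $\mathcal{P}_s$ as $s$ runs over the units of $\mathbb{Z}_k$ --- and to prove that they are \emph{pairwise orthogonal}. Since each is already known to be a simple relative Heffter system on one and the same half-set of $\mathbb{Z}_w\setminus\langle 2k+1\rangle$, where $w=n(2k+1)=kd(2k+1)$, once orthogonality is in hand these systems are exactly the parallel classes of a resolved partial linear space, that is, of a simple cyclic $(nk,k;\Phi(k)+1)_n$ relative Heffter space, which is the assertion. So the entire content of the theorem is the orthogonality of every pair of systems in the family.

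The backbone of that argument is the coset structure already used in Proposition~\ref{Heffter system 1}. Every element of every block has the shape $a_r c\,(2k+1)+\ell_r$ for some $r\in[0,k-1]$ and some integer coefficient $c$; reducing modulo $2k+1$ it lies in the coset $\ell_r+\langle 2k+1\rangle$, and because the $\ell_r$ are pairwise distinct modulo $2k+1$, each block meets each of the $k$ cosets $\ell_0+\langle 2k+1\rangle,\dots,\ell_{k-1}+\langle 2k+1\rangle$ in exactly one element. Consequently, if two blocks $B,B'$ taken from two of our systems shared two common elements, these would have to lie in two \emph{distinct} cosets $\ell_p+\langle 2k+1\rangle$ and $\ell_q+\langle 2k+1\rangle$ with $p\neq q$. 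For a fixed coset $\ell_r$, equating the two coset-$\ell_r$ elements and cancelling $\ell_r$ leaves $a_r(c-c')(2k+1)\equiv 0\pmod{w}$; since $n$ is odd both $k$ and $d$ are odd, so $kd$ is odd and $a_r\in\{1,-2,2\}$ is invertible modulo $kd$, whence the coincidence is equivalent to the single congruence $c\equiv c'\pmod{kd}$ on the coefficients. Thus a double intersection reduces to a clash of two such coefficient congruences, one at $p$ and one at $q$.

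I would then split into the two types of pairs. For $\mathcal{P}_0$ against $\mathcal{P}_s$: the coefficient of the coset-$\ell_r$ element of $B_{i,j,0}$ is $id+j$, while that of $B_{i',j',s}$ is $((r-i')\bmod k)\,ds+j'$, so the coincidences at $r=p$ and $r=q$ read $id+j\equiv((p-i')\bmod k)ds+j'$ and $id+j\equiv((q-i')\bmod k)ds+j'$ modulo $kd$. Subtracting cancels the $id+j$ and $j'$ terms and, after dividing by $d$, gives $(p-q)s\equiv 0\pmod{k}$; as $s$ is a unit of $\mathbb{Z}_k$ this forces $p\equiv q\pmod k$, i.e.\ $p=q$, a contradiction, so $\mathcal{P}_0\perp\mathcal{P}_s$. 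For $\mathcal{P}_s$ against $\mathcal{P}_{s'}$ with $s\neq s'$: a single coincidence read modulo $d$ first forces $j=j'$, after which the two coincidences become $(p-i)s\equiv(p-i')s'$ and $(q-i)s\equiv(q-i')s'$ modulo $k$, and subtracting yields $(p-q)(s-s')\equiv 0\pmod k$.

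This last relation is exactly where the real difficulty lies, and it is the step I expect to be the main obstacle. In contrast to the previous case, $(p-q)(s-s')\equiv 0\pmod k$ does \emph{not} by itself contradict $p\neq q$ once $k$ is composite, so one cannot finish by a bare subtraction; instead I would use the two coefficient congruences \emph{together} as a linear system over $\mathbb{Z}_k$, keeping track of the factors $a_p,a_q$ and of the precise integer values $(p-i)\bmod k$, and exploiting the arithmetic of the multipliers $s,s'$ and of $\gcd(s-s',k)$. The contradiction goes through cleanly when $k$ is prime --- which is precisely the maximal-density regime singled out in the introduction --- while the general divisor case requires the finer number-theoretic analysis of $\gcd(s-s',k)$ just indicated. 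Once all pairs are shown orthogonal, the conclusion is immediate: the $\Phi(k)+1$ systems are mutually orthogonal simple relative Heffter systems on a common half-set, hence the parallel classes of a simple cyclic $(nk,k;\Phi(k)+1)_n$ relative Heffter space, the simplicity having already been recorded in Proposition~\ref{Heffter system 1}.
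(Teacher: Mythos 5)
Your proposal is, in structure, exactly the paper's proof: the same family $\{\mathcal{P}_0\}\cup\{\mathcal{P}_s : s\in{\rm U}(\Z_k)\}$ from Proposition~\ref{Heffter system 1}, the same reduction of a double intersection to congruences on the coefficients of $2k+1$ (using that $a_r\in\{1,-2,2\}$ is invertible modulo the odd number $kd$), the same conclusion $(p-q)s\equiv 0\pmod{k}$ for a pair $\mathcal{P}_0,\mathcal{P}_s$, and the same relation $(p-q)(s-s')\equiv 0\pmod{k}$ for a pair of unit multipliers. Your treatment of the first case is correct and complete, and your indexing by units of $\Z_k$ is in fact the right reading: the paper's ``$s\in{\rm U}(\Z_{2k+1})$'' is a slip, since e.g.\ $s=k$ lies in ${\rm U}(\Z_{2k+1})$ but is divisible by every prime factor of $k$, which would break even that first case.

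The step you defer --- closing the two-unit-multiplier case for composite $k$ --- is a genuine gap in your write-up, but you should know that it is precisely the point at which the paper's own proof is erroneous rather than merely terse: the paper asserts (Equation~(\ref{eq:finale})) that $(m_1-m_2)(s_1-s_2)\equiv 0\pmod{k}$ forces $s_1=s_2$, which is valid only when differences of distinct admissible multipliers are units modulo $k$, e.g.\ when $k$ is prime. Moreover, the ``finer analysis'' you hope for cannot exist, because the orthogonality claim is false for composite $k$. Indeed, whenever $\gcd(s-s',k)>1$, any nonzero solution $p-q$ of the subtracted relation lifts to a solution of the full system: fix $i$ arbitrarily and solve the coset-$p$ congruence for $i'$ (possible since $s'$ is a unit); the coset-$q$ congruence then holds automatically. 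Concretely, for $k=9$, $s=1$, $s'=4$, the blocks $B_{0,j,1}\in\mathcal{P}_1$ and $B_{0,j,4}\in\mathcal{P}_4$ share the three elements $a_m(md+j)(2k+1)+\ell_m$ with $m\in\{0,3,6\}$, because $3a_m md(2k+1)\equiv 0\pmod{9d(2k+1)}$ exactly when $3\mid m$; so $\mathcal{P}_1$ and $\mathcal{P}_4$ are not orthogonal. This construction therefore yields at most $p_0$ mutually orthogonal systems, where $p_0$ is the smallest prime factor of $k$ (attained by the multipliers $\{0,1,\dots,p_0-1\}$), and $p_0=\Phi(k)+1$ only for $k$ prime. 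In short, your proof establishes the theorem exactly where the paper's proof is actually valid (prime $k$), and the obstacle you honestly flagged is not an artifact of your approach but a counterexample-backed failure, for composite $k$, of the construction both you and the paper rely on.
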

\begin{proof}
Set $n=kd$, $w = n(2k+1)$ and $r=\Phi(k)+1$, and let $I = \{0\}\cup {\rm U}(\Z_{2k+1})$.  In Proposition \ref{Heffter system 1} we constructed $r$ simple Heffter systems in $\mathbb{Z}_w$ relative to $\langle2k+1\rangle$. We will denote these Heffter systems by $\mathcal{P}_i$ for each $i \in I$,
where each of the $\mathcal{P}_i$'s exactly corresponds to the Heffter system denoted in the same way in Proposition \ref{Heffter system 1}.
If we fix $L$ to be the same half-set of $\mathbb{Z}_{2k+1}$, then for each $\ell_m \in L$ 
every Heffter system $\mathcal{P}_i$
contains elements of each coset $\ell_m + \langle{2k+1}\rangle$ of the subgroup $\langle{2k+1}\rangle$ of $\mathbb{Z}_w$ i.e. 

To prove that these Heffter systems are mutually orthogonal, we simply need to prove that for any $t_1, t_2 \in I,$ $t_1\neq t_2$ any block of  $\mathcal{P}_{t_1}$ intersects any block of $\mathcal{P}_{t_2}$ in at most one element.
Suppose firstly $t_1=0$ and $t_2=s \in {\rm U}(\mathbb{Z}_{2k+1})$. Note that given two blocks $B_{i_1,j_1,0}\in \mathcal{P}_0$ and $B_{i_2,j_2,s}\in \mathcal{P}_{s}$ with $j_1\neq j_2$ then the elements of the two blocks are contained in different cosets of the subgroup $\langle d(2k+1)\rangle$
which implies $B_{i_1,j_1,0} \cap B_{i_2,j_2,s}=\emptyset$. Hence we can take $B_{h,j,0}\in \mathcal{P}_0$ and $B_{i,j,s}\in \mathcal{P}_{s}$, that is:
 \begin{gather*}
    B_{h,j,0} := \{a_0(hd+j)(2k+1)+\ell_0,a_1(hd+j)(2k+1)+\ell_1,a_2(hd+j)(2k+1)+\ell_2,\ldots,\\ a_{k-3}(hd+j)(2k+1)+\ell_{k-3},{a_{k-2}}(hd+j)(2k+1)+\ell_{k-2},a_{k-1}(hd+j)(2k+1)+\ell_{k-1}\},
    \end{gather*}
and 
    \begin{gather*}
        B_{i,j,s} = \{a_{i}j(2k+1) + \ell_{i},a_{1+i}(ds+j)(2k+1) + \ell_{1+i},a_{2+i}(2ds+j)(2k+1)+ \ell_{2+i}, \\ \phantom{B_{(i,j)} +} \ldots,a_{k-2+i}((i-2)ds+j)(2k+1)+\ell_{k-2+i},a_{k-1+i}((k-1)ds+j)(2k+1)+ \ell_{k-1+i}\}.
    \end{gather*}
 By way of contradiction suppose that there exists an $s \in {\rm U}(\mathbb{Z}_{2k+1})$ such that a block
 $B_{i,j,s}\in \mathcal{P}_{s}$ intersects with a block of $\mathcal{P}_0$ in two distinct elements $x$ and $y$. 
Let then $m_1\in [0, k-1]$  be the index such that $x \equiv \ell_{m_1} \pmod{2k+1}$. Since $x \in B_{h,j,0} \cap B_{i,j,s}$, the following equation is satisfied:
\[
a_{m_1}(hd+j)(2k+1) + \ell_{m_1} = a_{m_1}((m_1-i)ds+j)(2k+1) + \ell_{m_1}.
\]
This implies:
\begin{equation} \label{eq:h1}
h = (m_1-i)s.
\end{equation}
An analogous argument can be carried out for $y \equiv \ell_{m_2} \pmod{2k+1}$ for some $m_2\in[0, k-1]$, $m_2 \neq m_1$, yielding that
\begin{equation} \label{eq:h2}
h = (m_2-i)s.
\end{equation}
Since $m_1,m_2\in[0, k-1]$, $m_2 \neq m_1$, and $s \in {\rm U}(\mathbb{Z}_{2k+1})$, it is clear that (\ref{eq:h1}) and (\ref{eq:h2}) cannot be satisfied at the same time, so we reach a contradiction. \\
    \\
  Similarly, assume by contradiction that for two distinct $s_1,s_2 \in {\rm U}(\mathbb{Z}_{2k+1})$
there exist two blocks $B_{i_1,j_1,s_1} \in \mathcal{P}_{s_1}$  and $B_{i_2,j_2,s_2} \in \mathcal{P}_{s_2}$ intersecting in two distinct elements $x$ and $y$. Similarly to the previous case, we can immediately deduce that $j_1=j_2=j$, and if $x \equiv \ell_1 \pmod{2k+1}$ for some $m_1\in[0, k-1]$, then
$x \in B_{i_1,j,s_1}\cap B_{i_2,j,s_2}$ implies
\[
a_{m_1}((m_1-i_1)ds_1+j)(2k+1) +\ell_{m_1}= a_{m_1}((m_1-i_2)ds_2+j)(2k+1)+\ell_{m_1}.
\]
 After some computations, we obtain the following:
\begin{equation} \label{eq:m1}
   (m_1 - i_1)s_1 =(m_1 - i_2)s_2. 
\end{equation}

Similarly, if $y \equiv \ell_{m_2} \pmod{2k+1}$ for some $m_2\in[0, k-1]$, $m_2 \neq m_1$, then:
\[
 a_{m_2}((m_2-i_1)ds_1+j)(2k+1) + \ell_{m_2} = a_{m_2}((m_2-i_2))ds_2+j)(2k+1)+ \ell_{m_2}.
\]
We then obtain:
\begin{equation} \label{eq:m2}
    (m_2 - i_1)s_1 =(m_2 -i_2)s_2.
\end{equation}
 Notice that if we subtract Equation (\ref{eq:m2}) from Equation (\ref{eq:m1}), we obtain the following:
    \begin{equation}\label{eq:finale}
    (m_1-m_2)s_1 = (m_1-m_2)s_2.
    \end{equation}
Since every unit $s \in {\rm U}(\mathbb{Z}_{2k+1})$ maps each group element $z \in \mathbb{Z}_{2k+1}$ to a unique group element $zs \in \mathbb{Z}_{2k+1}$, it follows that Equation (\ref{eq:finale}) can only hold if $s_1 = s_2$. This is a contradiction.

    Moreover, note that since each of the Heffter systems is simple, the relative Heffter space is simple.
\end{proof}

\begin{remark}
When $n=k$, the above construction produces several Heffter spaces with density $\delta \geq 0.7$. In particular, when $n=k$ is prime we obtain a Heffter space with $\delta = \frac{n}{n+1}$, which is the densest possible Heffter space in a cyclic group (see also Remark \ref{rem:densest_HS}). When $n=k=p^m$ is a prime power, then $\delta = \frac{p^{m-1}(p-1)}{p^m+1}$, which tends to a value of $\delta \geq 0.8$ as $m \to \infty$. Finally, when $n = k= pq$ is semiprimitive, we obtain a  Heffter space with $\delta = \frac{(p-1)(q-1)}{pq + 1}$. If $p \leq q$, then we get a Heffter space with $\delta \geq 0.7$ for $p \geq 5$ and $q \geq 11$ or if $p,q \geq 7$. 
\end{remark}

\begin{example}

One can easily check that the blocks of the Heffter systems constructed in Example 
\ref{ex:n15k5}  intersect each other in at most one point, meaning that they form a set of $5$ mutually orthogonal Heffter systems.
In other words the blocks of Example \ref{ex:n15k5} form a simple cyclic $(75,5;5)_{15}$ relative Heffter space whose five parallel classes are $\mathcal{P}_0,\ldots,\mathcal{P}_4$.
\end{example}



\begin{example}
Take $n=k=5$, then $2k+1 = 11$, $v = 55$, and $ \Phi(k)+1 = 5$. Hence there exists a simple cyclic $(25,5;5)_{5}$ relative Heffter space whose five parallel classes $\mathcal{P}_0,\ldots, \mathcal{P}_4$ are listed below. 
    \begin{center}
        \begin{tabular}{|c|c|c|c|c|}
            \hline 
             & $\mathcal{P}_0$ & $\mathcal{P}_1$ & $\mathcal{P}_2$ \\
            \hline
            $B_{0,0,s}$ & $\{-10,-2,3,4,5\}$ & $\{-10,-24,-8,-7,-6\}$ & $\{-10,9,-19,-18,-17\}$ \\
            \hline
            $B_{1,0,s}$ & $\{1,-24,25,-18,16\}$ & $\{1,9,14,26,5\}$ & $\{1,-13,3,15,-6\}$ \\
            \hline
            $B_{2,0,s}$ & $\{12,9,-8,15,27\}$ & $\{12,-13,-19,4,16\}$ & $\{12,20,25,-7,5\}$ \\
            \hline
            $B_{3,0,s}$ & $\{23,-13,14,-7,-17\}$ & $\{23,20,3,-18,27\}$ & $\{23,-2,-8,26,16\}$ \\
            \hline
            $B_{4,0,s}$ & $\{-21,20,-19,26,-6\}$ & $\{-21,-2,25,15,-17\}$ & $\{-21,-24,14,4,27\}$ \\
            \hline 
        \end{tabular}
    \end{center}
    \begin{center}
        \begin{tabular}{|c|c|c|}
            \hline
             & $\mathcal{P}_3$ & $\mathcal{P}_4$ \\
            \hline
            $B_{0,0,s}$ & $\{-10,-13,25,26,27\}$ & $\{-10,20,14,15,16\}$  \\
            \hline 
            $B_{1,0,s}$ & $\{1,20,-8,4,-17\}$ & $\{1,-2,-19,-7,27\}$ \\ 
            \hline 
            $B_{2,0,s}$ & $\{12,-2,14,-18,-6\}$ & $\{12,-24,3,26,-17\}$ \\
            \hline
            $B_{3,0,s}$ & $\{23,-24,-19,15,5\}$ & $\{23,9,25,4,-6\}$ \\
            \hline 
            $B_{4,0,s}$ & $\{-21,9,3,-7,16\}$ & $\{-21,-13,-8,-18,5\}$ \\
            \hline
        \end{tabular}
    \end{center}
\end{example}

In the next theorem we present another direct construction of an infinite family of simple cyclic relative Heffter spaces.

\begin{theorem}\label{thm:hs2}
Let $p\geq 3$ be a prime and let $k \in[3,p]$. Then there exists a simple cyclic $(pk,k;p)_{p}$ relative Heffter space. 
\end{theorem}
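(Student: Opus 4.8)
The plan is to reuse the coset-lifting philosophy of Theorem~\ref{thm:hs1}, but since $k$ need not divide $p$ the divisor parameter $d$ is no longer available; instead I would index the blocks of each parallel class by the affine lines $y=sm+b$ over the field $\Z_p$. Put $w=p(2k+1)$ and let $J=\langle 2k+1\rangle\le\Z_w$, which has order $p$. First I would fix the combinatorial ingredients. By Proposition~\ref{JMP Proposition} there is a simple zero-sum half-set $L'=\{\ell'_0,\dots,\ell'_{k-1}\}$ of $\Z_{2k+1}$; its integer sum equals $\alpha(2k+1)$ with $\alpha\in[-1,1]$, so replacing $\ell'_0$ by $\ell'_0-\alpha(2k+1)$ yields a half-set $L=\{\ell_0,\dots,\ell_{k-1}\}$ that is still simply ordered by $\omega=(\ell_0,\dots,\ell_{k-1})$ modulo $2k+1$ and now satisfies $\sum_m\ell_m=0$ in $\Z$. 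Next, from Lemma~\ref{sequence lemma} (for $k$ odd) or Lemma~\ref{lem:seq_k_even} (for $k$ even) I would take the sequence $A=(a_0,\dots,a_{k-1})$, whose entries lie in $\{\pm1,\pm2\}$ and which satisfies both $\sum_m a_m=0$ and $\sum_m m\,a_m=0$ (the latter being $\sum A_0=0$ in the odd case). Since $p\ge 3$ is prime, every $a_m$ is a unit of $\Z_p$, and since $k\le p$ the integers $0,1,\dots,k-1$ are pairwise distinct modulo $p$.

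With these in hand I would define the point set $V=\bigcup_{m=0}^{k-1}(\ell_m+J)$; by Lemma~\ref{inverse cosets lemma} together with the fact that $L$ is a half-set of $\Z_{2k+1}$, this is a half-set of $\Z_w\setminus J$ of size $pk$. For each $s\in\Z_p$ and $b\in\Z_p$ set
\[
B_{b,s}=\{\,\ell_m+a_m(sm+b)(2k+1)\mid m\in[0,k-1]\,\},
\qquad \mathcal{P}_s=\{B_{b,s}\mid b\in\Z_p\}.
\]
I would then verify the four required properties. Zero-sum is the crucial point: $\sum B_{b,s}=\sum_m\ell_m+(2k+1)\big(s\sum_m m\,a_m+b\sum_m a_m\big)=0$, which vanishes \emph{for every} $b$ and $s$ precisely because $A$ kills both linear functionals and $L$ is integer zero-sum. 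For the partition property, at column $m$ the map $b\mapsto a_m(sm+b)$ is a bijection of $\Z_p$ (as $a_m$ is a unit), so the $p$ blocks of $\mathcal{P}_s$ cover each coset $\ell_m+J$ exactly once; hence $\mathcal{P}_s$ is a relative Heffter system on $V$.

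For orthogonality of $\mathcal{P}_{s_1}$ and $\mathcal{P}_{s_2}$ with $s_1\neq s_2$, two of their blocks meet at column $m$ iff $a_m(s_1m+b_1)=a_m(s_2m+b_2)$, i.e.\ (cancelling the unit $a_m$) iff $(s_1-s_2)m=b_2-b_1$; since $s_1-s_2$ is a unit this determines $m$ uniquely in $\Z_p$, and at most one such $m$ lies in $[0,k-1]$, so the two blocks share at most one point. Finally, ordering each block by increasing $m$ gives partial sums congruent modulo $2k+1$ to the partial sums of $\omega$, which are distinct; hence every block is simple and the space is simple. This produces the $p$ mutually orthogonal simple systems $\mathcal{P}_0,\dots,\mathcal{P}_{p-1}$, i.e.\ a simple cyclic $(pk,k;p)_p$ relative Heffter space.

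I expect the only real obstacle to be guaranteeing that \emph{all} $p^2$ blocks are zero-sum at once: a naive affine parametrisation $c_m=sm+b$ would force the sum to depend on $b$ through a term $k\,b$, leaving only one zero-sum block per class. The device that removes this obstacle is weighting the coset-coefficients by the sequence $A$, so that the two moment conditions $\sum_m a_m=0$ and $\sum_m m\,a_m=0$ annihilate both the $b$- and $s$-dependence of the block sum; checking that the $A$ of Lemmas~\ref{sequence lemma} and \ref{lem:seq_k_even} indeed satisfies both, and that its entries stay units modulo $p$, is the point that must be handled with care (in particular the borderline case $2k+1=p$, where $\Z_w=\Z_{p^2}$ is not a direct product, should be checked to cause no trouble).
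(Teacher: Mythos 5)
Your proposal is correct and is essentially the paper's own proof: your blocks $B_{b,s}=\{\ell_m+a_m(sm+b)(2k+1)\}$ are exactly the paper's $B_{i,j}=\{a_m(i+jm)(2k+1)+\ell_m\}$ up to relabeling $(b,s)\leftrightarrow(i,j)$, built from the same integer zero-sum half-set $L$ and the same sequence $A$ satisfying $\sum_m a_m=\sum_m m\,a_m=0$, with the same orthogonality argument (cancel the unit $a_m$ modulo $p$ and use that $m_1-m_2$ is invertible since $k\leq p$) and the same simplicity argument via reduction modulo $2k+1$. The concern you flag about $2k+1=p$ is indeed harmless, since the cancellation $(2k+1)x\equiv(2k+1)y\pmod{p(2k+1)}\Leftrightarrow x\equiv y\pmod p$ never requires $\gcd(2k+1,p)=1$.
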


\begin{proof}
Let $p$ and $k$ be as in the statement.  Let $A = (a_0,\ldots,a_{k-1})$ be the  sequence of $\mathbb{Z}_{2k+1}$ constructed in Lemma \ref{sequence lemma} (respectively in Lemma \ref{lem:seq_k_even}) if $k$ is odd (respectively if $k$ is even). 
As done in the proof of Proposition \ref{Heffter system 1}, we can construct an integer half-set $L = \{\ell_0,\ell_1,\ldots,\ell_{k-1}\}$ of $\mathbb{Z}_{2k+1}$ having a simple ordering.


For any $j \in [0,p-1]$ define $\mathcal{P}_j=\{B_{i,j} \mid i \in[0, p-1]\}$, where 
\[
B_{i,j} = \{a_m(i+jm)(2k+1)+\ell_m \mid m\in[0,k-1] \}.
\] 
As a first remark, we note that each $\mathcal{P}_j$ is a simple Heffter system in $\Z_{p(2k+1)}$ relative to $\langle 2k+1\rangle$. Indeed,
\[
\begin{aligned}
\sum B_{i,j} &= \sum_{m=0}^{k-1} ( a_{m} (i+jm)(2k+1)+\ell_{m}) \\
&= (2k+1) \left(i \sum_{m=0}^{k-1} a_m+j\sum_{m=0}^{k-1}  ma_m \right)  + \sum L = 0,
\end{aligned}
\]
where the last equality holds by Lemma \ref{sequence lemma} if $k$ is odd, and by Lemma \ref{lem:seq_k_even} if $k$ is even. The simplicity of the blocks of each $\mathcal{P}_j$ and the fact that they partition a half-set of $\Z_{p(2k+1)} \setminus \langle 2k+1\rangle$ follows by  an argument analogous to the one of Proposition \ref{Heffter system 1}.

To verify that $\{\mathcal{P}_j \mid j \in [0,p-1]\}$ is a set of mutually orthogonal Heffter systems, assume by way of contradiction that there exist two distinct blocks $B_{i_1,j_1}$ and $B_{i_2,j_2}$ having two common elements $x$ and $y$. 
Then let $ m_1\in[0,k-1]$ be the index such that $x \equiv \ell_{m_1} \pmod{2k+1}$. Since $x \in B_{i_1,j_1}\cap B_{i_2,j_2}$, the following equation holds:
\[
a_{m_1} (i_1+j_1m_1) (2k+1) + \ell_{m_1} = a_{m_1} (i_2+j_2m_1) (2k+1) + \ell_{m_1} 
\]
That implies:
\begin{equation} \label{eq:all_k1}
i_1+j_1m_1 = i_2+j_2m_1.
\end{equation}
Similarly, if $y \equiv \ell_{m_2} \pmod{2k+1}$, then $y \in B_{i_1,j_1}\cap B_{i_2,j_2}$ implies:
\begin{equation} \label{eq:all_k2}
i_1+j_1m_2 = i_2+j_2m_2.
\end{equation}
If we subtract Equation (\ref{eq:all_k2}) from Equation (\ref{eq:all_k1}) we obtain the following:
\[
j_1(m_1-m_2) = j_2(m_1-m_2),
\]
that implies $j_1 = j_2$, hence $i_1=i_2$. This is a contradiction, hence $\{\mathcal{P}_j \mid j \in [0,p-1]\}$ is a simple cyclic $(pk,k;p)_{p}$ relative Heffter space.
\end{proof}

\begin{example}\label{ex:n7k6}
Take $p=7$ and $k = 6$, then $\mathcal{P}_0, \dotsc, \mathcal{P}_6$ listed below are the parallel classes of a simple cyclic $(42,6;7)_{7}$ relative Heffter space:

\begin{center}
\begin{tabular}{|c|c|c|c|}\hline
& $\mathcal{P}_0$& $\mathcal{P}_1$& $\mathcal{P}_2$ \\ \hline
   $B_{0,j}$&$\{  7 , 1 , -2 , 3 , -4 , -5\}$  &$\{  7 , 27 , 24 , -36 , -43 , 21\}$  &$\{  7 , -38 , -41 , 16 , 9 , -44\}$  \\ \hline 
   $B_{1,j}$&$\{  -19 , 27 , 11 , -10 , 9 , -18\}$  &$\{  -19 , -38 , 37 , 42 , -30 , 8\}$  &$\{  -19 , -12 , -28 , 3 , 22 , 34\}$  \\ \hline
  $B_{2,j}$&$\{  -45 , -38 , 24 , -23 , 22 , -31\}$  &$\{  -45 , -12 , -41 , 29 , -17 , -5\}$  &$\{  -45 , 14 , -15 , -10 , 35 , 21\}$  \\ \hline
  $B_{3,j}$&$\{  20 , -12 , 37 , -36 , 35 , -44\}$  &$\{  20 , 14 , -28 , 16 , -4 , -18\}$  &$\{  20 , 40 , -2 , -23 , -43 , 8\}$  \\ \hline
  $B_{4,j}$&$\{  -6 , 14 , -41 , 42 , -43 , 34\}$  &$\{  -6 , 40 , -15 , 3 , 9 , -31\}$  &$\{  -6 , -25 , 11 , -36 , -30 , -5\}$  \\ \hline
  $B_{5,j}$&$\{  -32 , 40 , -28 , 29 , -30 , 21\}$  &$\{  -32 , -25 , -2 , -10 , 22 , -44\}$  &$\{  -32 , 1 , 24 , 42 , -17 , -18\}$  \\ \hline
  $B_{6,j}$&$\{  33 , -25 , -15 , 16 , -17 , 8\}$  &$\{  33 , 1 , 11 , -23 , 35 , 34\}$  &$\{  33 , 27 , 37 , 29 , -4 , -31\}$  \\ \hline
 \end{tabular}
 \end{center}
 \begin{center}
 \begin{tabular}{|c|c|c|c|}\hline
& $\mathcal{P}_3$& $\mathcal{P}_4$& $\mathcal{P}_5$ \\ \hline
    $B_{0,j}$&$\{  7 , -12 , -15 , -23 , -30 , -18\}$  &$\{  7 , 14 , 11 , 29 , 22 , 8\}$  &$\{  7 , 40 , 37 , -10 , -17 , 34\}$  \\ \hline
    $B_{1,j}$&$\{  -19 , 14 , -2 , -36 , -17 , -31\}$  &$\{  -19 , 40 , 24 , 16 , 35 , -5\}$  &$\{  -19 , -25 , -41 , -23 , -4 , 21\}$  \\ \hline
    $B_{2,j}$&$\{  -45 , 40 , 11 , 42 , -4 , -44\}$  &$\{  -45 , -25 , 37 , 3 , -43 , -18\}$  &$\{  -45 , 1 , -28 , -36 , 9 , 8\}$  \\ \hline
    $B_{3,j}$&$\{  20 , -25 , 24 , 29 , 9 , 34\}$  &$\{  20 , 1 , -41 , -10 , -30 , -31\}$  &$\{  20 , 27 , -15 , 42 , 22 , -5\}$  \\ \hline
    $B_{4,j}$&$\{  -6 , 1 , 37 , 16 , 22 , 21\}$  &$\{  -6 , 27 , -28 , -23 , -17 , -44\}$  &$\{  -6 , -38 , -2 , 29 , 35 , -18\}$  \\ \hline
    $B_{5,j}$&$\{  -32 , 27 , -41 , 3 , 35 , 8\}$  &$\{  -32 , -38 , -15 , -36 , -4 , 34\}$  &$\{  -32 , -12 , 11 , 16 , -43 , -31\}$  \\ \hline
    $B_{6,j}$&$\{  33 , -38 , -28 , -10 , -43 , -5\}$  &$\{  33 , -12 , -2 , 42 , 9 , 21\}$  &$\{  33 , 14 , 24 , 3 , -30 , -44\}$  \\ \hline
  \end{tabular}

   \end{center}
 \begin{center}
  \begin{tabular}{|c|c|} \hline
  &$\mathcal{P}_6$ \\ \hline 
  $B_{0,j}$ &$\{  7 , -25 , -28 , 42 , 35 , -31\}$  \\ \hline
  $B_{1,j}$ &$\{  -19 , 1 , -15 , 29 , -43 , -44\}$  \\ \hline
 $B_{2,j}$ &$\{  -45 , 27 , -2 , 16 , -30 , 34\}$  \\ \hline
 $B_{3,j}$ &$\{  20 , -38 , 11 , 3 , -17 , 21\}$  \\ \hline
 $B_{4,j}$ &$\{  -6 , -12 , 24 , -10 , -4 , 8\}$  \\ \hline
 $B_{5,j}$ &$\{  -32 , 14 , 37 , -23 , 9 , -5\}$  \\ \hline
 $B_{6,j}$ &$\{  33 , 40 , -41 , -36 , 22 , -18\}$  \\ \hline
 \end{tabular}
\end{center}
\end{example}

In the last part of this section we focus on the special case in which the block size equals the degree of the space (and hence the number of points and blocks are equal).
Note that the following is a consequence both of Theorem \ref{thm:hs1} and Theorem \ref{thm:hs2}.
\begin{corollary}\label{cor:p2,p,p}
Given a prime $p\geq 3$, then there exists a simple cyclic $(p^2,p;p)_{p}$ relative Heffter space.
\end{corollary}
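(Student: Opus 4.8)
The plan is to observe that Corollary \ref{cor:p2,p,p} follows immediately from either of the two main construction theorems by specializing their parameters, so the ``proof'' is really a verification that the hypotheses match. I would present both routes, since the statement itself advertises that the result is a consequence of both Theorem \ref{thm:hs1} and Theorem \ref{thm:hs2}.

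First I would derive it from Theorem \ref{thm:hs2}. That theorem, for a prime $p \geq 3$ and any $k \in [3,p]$, produces a simple cyclic $(pk,k;p)_p$ relative Heffter space. Taking $k = p$ (which is legitimate since $p \geq 3$ means $p \in [3,p]$) yields a simple cyclic $(p^2,p;p)_p$ relative Heffter space, which is exactly the desired object. This is the cleanest derivation and requires only a single substitution.

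Alternatively, I would derive it from Theorem \ref{thm:hs1}, which requires $n$ odd and $k \mid n$, and produces a simple cyclic $(nk,k;\Phi(k)+1)_n$ relative Heffter space. Here I set $n = k = p$; since $p \geq 3$ is prime it is in particular odd, and $p \mid p$ trivially, so the hypotheses hold. This gives a simple cyclic $(p^2,p;\Phi(p)+1)_p$ relative Heffter space. The only point needing a word is that $\Phi(p)+1 = (p-1)+1 = p$ when $p$ is prime, so the degree of the space is exactly $p$, matching the claimed $(p^2,p;p)_p$ parameters.

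There is essentially no obstacle here; the corollary is a pure parameter specialization and the entire content lies in checking that the arithmetic constraints ($p \in [3,p]$ for the first route, and $n=k=p$ odd with $k \mid n$ together with $\Phi(p) = p-1$ for the second) are satisfied. I would simply state both specializations and let the reader pick whichever parent theorem they prefer.
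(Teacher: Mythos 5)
Your proposal is correct and matches the paper exactly: the paper gives no separate proof of this corollary, stating only that it is ``a consequence both of Theorem \ref{thm:hs1} and Theorem \ref{thm:hs2},'' which is precisely the two-route parameter specialization (take $k=p$ in Theorem \ref{thm:hs2}, or $n=k=p$ with $\Phi(p)+1=p$ in Theorem \ref{thm:hs1}) that you carry out. Both of your verifications of the hypotheses are accurate, so nothing is missing.
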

In the case of the above corollary we have a resolvable configuration where the number of points is the square of the block size, hence we find again the concept of a \emph{net}.
So, following the terminology introduced in \cite{BP}, we call the Heffter space of Corollary \ref{cor:p2,p,p} a Heffter net. The densest Heffter net known so far has been obtained, with the aid of a computer, in \cite{BP} and it has parameters $(121,11;9)$ and hence density $\delta=0.75$. The Heffter net constructed in Corollary \ref{cor:p2,p,p} has density $\delta=\frac{p}{p+1}$, which asymptotically approaches $1$.
We recall that in \cite{BP} (see Corollary 2.2) the authors proved that a linear cyclic Heffter space cannot exists, namely that it is not possible, working in a cyclic group, to construct a Heffter space with density one. Since the proof of this result also holds in the more general case where 
the point set is a half-set of $\mathbb{Z}_w\setminus J$ and $J$ is not necassarily the trivial subgroup, we can state the following.
\begin{proposition}
    A cyclic relative Heffter space cannot be linear.
\end{proposition}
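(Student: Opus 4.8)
The plan is to adapt the summation argument underlying \cite[Corollary 2.2]{BP}, whose only inputs are that the ambient group is cyclic and that every block is zero-sum; since neither feature depends on $J$ being trivial, the same proof applies verbatim in the relative setting. I argue by contradiction, assuming a linear cyclic relative Heffter space exists. Write $V$ for its point set, a half-set of $\mathbb{Z}_w\setminus J$ with $|V|=v=nk$, let $k$ be the constant block size and $r$ the degree. Linearity means $\delta=1$, hence $v-1=r(k-1)$; moreover $k\geq 3$ automatically, since a zero-sum block cannot have size $1$ or $2$ (a half-set contains no pair $\{a,-a\}$), and one may assume $r\geq 2$, because $r=1$ forces $n=1$, i.e.\ the degenerate single-block space $V$.

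The central step is to exploit the zero-sum condition locally at a point. I would fix $x\in V$ and consider the $r$ blocks through $x$, exactly one from each parallel class. By linearity every other point of $V$ is collinear with $x$, and by the partial-linear-space axiom it is so through a unique block; hence the sets $B\setminus\{x\}$, as $B$ ranges over the blocks through $x$, partition $V\setminus\{x\}$. Summing the identities $\sum B=0$ over these $r$ blocks and separating the $r$ copies of $x$ then yields
\[
(r-1)\,x + \sum V = 0 \qquad \text{for every } x\in V.
\]

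From this relation I extract a group-theoretic contradiction. Subtracting the relation for two points $x,x'\in V$ gives $(r-1)(x-x')=0$ in $\mathbb{Z}_w$, so every difference of points of $V$ lies in the subgroup $H=\{d\in\mathbb{Z}_w : (r-1)d=0\}$, which has order $\gcd(r-1,w)\leq r-1$. Thus $V$ is contained in a single coset of $H$, whence $v=|V|\leq r-1$. On the other hand $v=r(k-1)+1\geq 2r+1$ since $k\geq 3$, and $2r+1\leq r-1$ is impossible. This is the desired contradiction.

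I do not expect a serious obstacle. The one point requiring care is the justification that the blocks through $x$ partition $V\setminus\{x\}$, which uses both linearity (to cover every point) and the PLS property (to forbid double covering); everything else is bookkeeping. It is worth stressing where cyclicity enters: it is precisely the bound $\gcd(r-1,w)\leq r-1$ that confines $V$ to a short coset, and this is the feature that fails for non-cyclic groups. Finally, passing from trivial $J$ to general $J$ changes nothing, as the argument only ever sees the half-set $V$, the zero-sum blocks, and the cyclic group $\mathbb{Z}_w$.
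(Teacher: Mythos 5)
Your proof is correct and is essentially the argument the paper itself relies on: the paper offers no self-contained proof, merely observing that the proof of \cite{BP} (Corollary 2.2) carries over when $J$ is nontrivial, and your reconstruction---summing the $r$ zero-sum blocks through a point $x$ to get $(r-1)x+\sum V=0$, hence $(r-1)(x-x')=0$ for all points, and then trapping $V$ inside a coset of the annihilator of $r-1$ in $\mathbb{Z}_w$, whose order $\gcd(r-1,w)\le r-1$ is too small compared with $v=r(k-1)+1\ge 2r+1$---is precisely that argument, with cyclicity entering exactly where you say it does. The one caveat, which you handle by fiat and the paper does not address at all, is the degenerate case $r=1$ (equivalently $n=1$), where a single zero-sum block is, under the paper's literal definitions, a linear cyclic relative Heffter space; both your proof and the cited one implicitly exclude this trivial configuration.
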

This allows us state the make the following consideration.
\begin{remark} \label{rem:densest_HS}
For any prime $p\geq 3$, the $(p^2,p;p)_{p}$ Heffter net
of Corollary \ref{cor:p2,p,p} is the densest Heffter net which can be constructed
on $\mathbb{Z}_{p(2p+1)}$ relative to $\langle 2p+1 \rangle$.
\end{remark}


\section{Globally simple relative Heffter arrays}\label{sec:HA}
As explained in the Introduction, a relative Heffter space of degree two is completely equivalent to a relative Heffter array. This means that, as a consequence of the results obtained in the previous section, we get new constructions for relative Heffter arrays. Moreover, these arrays satisfy the very strong additional condition of being \emph{globally simple}, a property introduced in \cite{CMPP2}.

As usual, with a little abuse of notation, we can identify each row (respectively column) of a (relative) Heffter array $\H_t(n; k)$ with the set of size $k$ whose elements are the entries of the nonempty cells of such a row (respectively column). A (relative) Heffter array is \emph{simple} if each row and each column admits a simple ordering. Hence, to verify this property we need to provide an ordering for each row and each column which is simple. Clearly, larger
$n$ and $k$ are longer and more tedious is to provide explicit simple orderings for
rows and columns of an $\H_t(n;k)$. For this reason, in \cite{CMPP2} the authors introduced the concept of a \emph{globally simple} Heffter array, namely a Heffter array which is simple
with respect to the natural ordering of rows (namely from left to right) and columns (namely from top to bottom). It is evident that to construct globally simple (relative) Heffter arrays is much more difficult than to construct simple (relative) Heffter arrays.
Infinite classes of globally simple Heffter arrays can be found in \cite{BHeffter, BCDY, CDY, CMPP, DM, MT}. At the moment, as far as we know, there are only two classes of globally simple relative Heffter arrays, that is $\H_7(n;7)$ and $\H_9(n;9)$, constructed in \cite{CPP}. Hence in the following, we present the first two infinite classes of globally simple relative Heffter arrays in which the block size is not fixed.

\begin{theorem}\label{thm:array1}
There exists a globally simple $\H_n(n;k)$ for every odd $n\geq 3$ and every $k$ dividing $n$. 
\end{theorem}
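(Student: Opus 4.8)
The plan is to translate Theorem \ref{thm:hs1} into the language of relative Heffter arrays. Recall that a relative Heffter space of degree two is exactly a relative Heffter array, where the two orthogonal Heffter systems play the roles of the row-set and the column-set. By Theorem \ref{thm:hs1}, for odd $n$ and $k$ dividing $n$ there exists a simple cyclic $(nk,k;\Phi(k)+1)_n$ relative Heffter space, whose parallel classes are the mutually orthogonal Heffter systems $\mathcal{P}_0, \mathcal{P}_1, \ldots$ constructed in Proposition \ref{Heffter system 1}. Since $\Phi(k)+1 \geq 2$, I can select any two of these systems, say $\mathcal{P}_0$ and $\mathcal{P}_s$ for some $s \in {\rm U}(\mathbb{Z}_{2k+1})$, and use them as the rows and columns of an $\H_n(n;k)$ over $\mathbb{Z}_{n(2k+1)}$ relative to $\langle 2k+1\rangle$; note that $w = n(2k+1) = 2nk + n$, so here $t = n$, matching the subscript in $\H_n(n;k)$.

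First I would set up the array. The block set of $\mathcal{P}_0$ consists of $n = kd$ blocks, as does that of $\mathcal{P}_s$; since the two systems are orthogonal, each block of $\mathcal{P}_0$ meets each block of $\mathcal{P}_s$ in at most one element, and a counting argument (each element of the common half-set lies in exactly one block of each system) shows that the shared elements realize an $n\times n$ array in which cell $(R,C)$ is filled precisely when the row-block $R \in \mathcal{P}_s$ and the column-block $C \in \mathcal{P}_0$ intersect, the entry being their common element. Each row and each column then has exactly $k$ filled cells, the entries form the half-set of $\mathbb{Z}_w\setminus\langle 2k+1\rangle$, and every row and column is zero-sum because the blocks of a Heffter system are. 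This establishes properties (a), (b), (c) of the Heffter array definition.

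The hard part will be verifying \emph{global} simplicity, that is, simplicity with respect to the natural left-to-right and top-to-bottom orderings rather than some cleverly chosen ordering. The simplicity proved in Proposition \ref{Heffter system 1} only guarantees the \emph{existence} of a simple ordering for each block; I must instead exhibit an explicit indexing of the cells so that the natural reading order of each row and each column coincides with a simple ordering. The key observation is that the blocks were built from the sequence $A = (a_0,\ldots,a_{k-1})$ of Lemma \ref{sequence lemma} together with the integer half-set $L = \{\ell_0,\ldots,\ell_{k-1}\}$ whose ordering $\omega = (\ell_0,\ldots,\ell_{k-1})$ is simple modulo $2k+1$; since every multiple of $2k+1$ vanishes modulo $2k+1$, no partial sum of a block in its $\omega$-induced ordering is zero modulo $2k+1$, hence none is zero modulo $w$. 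I therefore need to arrange the rows and columns of the array so that reading a block in the natural order reproduces the order $(\ell_0, \ell_1, \ldots, \ell_{k-1})$ of the underlying half-set. This amounts to choosing the labelling of the $n$ rows and $n$ columns (equivalently, fixing how the index pairs $(i,j)$ of the blocks $B_{i,j,0}$ and $B_{i,j,s}$ map to physical row and column positions) so that the $m$-th filled cell encountered carries an element congruent to $\ell_m$.

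To carry this out, I would index rows and columns by the cosets $\ell_m + \langle 2k+1\rangle$ refined by the subgroup structure from Lemma \ref{expressing coset in terms of smaller coset lem}, assigning the position of each element within its row and column according to the fixed index $m$ of its congruence class $\ell_m$. Because the simple ordering $\omega$ of $L$ was transported verbatim into every block in both $\mathcal{P}_0$ and $\mathcal{P}_s$, ordering the filled cells of each row and column by increasing $m$ yields exactly this simple ordering; I then confirm that this ordering-by-$m$ is realized by the natural top-to-bottom and left-to-right sweep under my labelling. Once the labelling is fixed, global simplicity reduces to the already-established fact that the partial sums of $\omega$ avoid $0$ modulo $2k+1$, and hence modulo $w$. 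I expect the genuine work to lie in this bookkeeping—pinning down the row/column labelling so that ``natural order'' and ``the order $\omega$'' agree simultaneously for the two orthogonal systems—whereas the zero-sum, half-set, and degree conditions follow immediately from Theorem \ref{thm:hs1}.
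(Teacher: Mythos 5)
Your setup is sound and matches the paper's: index the rows and columns of the array by the blocks of two of the orthogonal systems from Theorem \ref{thm:hs1}, fill each cell with the (by orthogonality, at most one) common element, and verify properties (a), (b), (c) by the counting argument you give. The gap is in the global-simplicity step, and it is fatal as stated. You propose to choose a labelling of the rows and columns so that the natural sweep of \emph{every} row and \emph{every} column reproduces the fixed order $(\ell_0,\ell_1,\ldots,\ell_{k-1})$, i.e.\ so that the $m$-th filled cell encountered lies in the class $\ell_m+\langle 2k+1\rangle$. No such labelling exists for $k\geq 2$. Indeed, the intersection pattern forces the array to be block-diagonal with $d$ fully filled $k\times k$ blocks (a block $B_{i,g,0}$ of $\mathcal{P}_0$ meets $B_{j,h,1}$ of $\mathcal{P}_1$ if and only if $g=h$), and inside one such $k\times k$ block the entry in row $i$, column $j$ lies in class $\ell_{(i+j)\bmod k}$. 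A relabelling consists of a single permutation $\sigma$ of the rows and a single permutation $\tau$ of the columns; requiring merely that every row's first filled cell lie in class $\ell_0$ forces $\sigma(i')+\tau(0)\equiv 0 \pmod{k}$ for all $i'$, i.e.\ $\sigma$ constant, a contradiction. So not even the rows alone, let alone rows and columns simultaneously, can be made to read in the order $\omega$: the ``bookkeeping'' you defer to the end cannot be carried out.

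The missing idea is exactly how the paper circumvents this: one does not need the natural reading order to equal $\omega$, only to be a \emph{cyclic permutation} of a simple ordering, because a cyclic permutation of a simple ordering of a zero-sum set is again simple (if the partial sums $c_0,\ldots,c_{k-1}$ are pairwise distinct and $c_{k-1}=0$, then the partial sums of the shift starting at position $t$ are $c_m-c_{t-1}$ for $m\in[0,k-1]$, again pairwise distinct). With the paper's labelling, where cell $(gk+i+1,hk+j+1)$ holds $B_{i,g,0}\cap B_{j,h,1}$, each column read top-to-bottom is exactly the defining simple ordering of a block of $\mathcal{P}_1$, while each row read left-to-right is a cyclic shift of the defining simple ordering of a block of $\mathcal{P}_0$; global simplicity then follows from the cyclic-shift fact. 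A secondary point: the paper pairs $\mathcal{P}_0$ with $\mathcal{P}_1$ specifically, whereas you allow an arbitrary $s\in{\rm U}(\mathbb{Z}_{2k+1})$; for a general second system one also needs $s$ to be invertible modulo $k$ for the intersection pattern (and hence the diagonal-block structure above) to come out as described, so ``select any two'' requires justification rather than being automatic.
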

\begin{proof}
Let $n=kd$ be odd. Let $\mathcal{H} = \{\mathcal{P}_s \mid  \text{ $s=0$ or $\gcd(s,k)=1$}\}$ be the $(nk,k;\Phi(k)+1)_n$ relative Heffter space of Theorem \ref{thm:hs1}, and consider the two Heffter systems $\mathcal{P}_0$ and $\mathcal{P}_{1}$. Construct the $n\times n$ array $H$ such that for every $g,h \in [0,d-1]$ and $i,j \in [0,k-1]$ the $(gk+i+1, hk+j+1)$-th cell of $H$ is filled with the element $B_{i,g,0}\cap B_{j,h,1}$ if it exists, and it is empty otherwise, where we recall that $B_{i,g,0} \in \mathcal{P}_0$ and $B_{j,h,1}\in\mathcal{P}_1$. Clearly, the array $H$ is an $\H_n(n;k)$; in what follows, we show that it is globally simple.

Assume that two blocks $B_{i,g,0}$ and $B_{j,h,1}$ share a common element $x$. We recall that for every $g,h \in [0,d-1]$  and $i,j\in [0,k-1]$:
 \begin{gather*}
    B_{i,g,0} = \{a_m(id+g)(2k+1)+\ell_m \mid m \in [0,k-1]\},
    \end{gather*} 
      \begin{gather*}
        B_{j,h,1} = \{a_{m+j}(md+h)(2k+1)+\ell_{m+j}\mid m \in[0,k-1]\}.
    \end{gather*}
    We have $x \in B_{i,g,0} \cap B_{j,h,1}$, with $x \equiv \ell_{m_1} \pmod{2k+1} \equiv \ell_{m_2+j} \pmod{2k+1}$ for some $m_1,m_2 \in [0,k-1]$, hence:     
    \[
    a_{m_1}(id+g)(2k+1)+\ell_{m_1} \equiv a_{m_2+j}(m_2d+h)(2k+1)+\ell_{m_2+j} \pmod{n(2k+1)}.
    \]
    Clearly, it follows that $m_1 \equiv m_2 + j \pmod{2k+1}$, thus $\ell_{m_1} = \ell_{m_2+j}$ and $a_{m_1}=a_{m_2+j}$. From the previous equation, we derive that $id+g \equiv m_2d+h \pmod{n}$; since $d$ divides $n$, it can easily be seen that necessarily $g=h$, hence $i=m_2$. As a first consequence, we have shown that $B_{i,g,0} \cap B_{j,h,1}$ is nonempty if and only if $g=h$. Moreover, as $i=m_2$, we immediately derive that the $(gk+j+1)$-th  column of $H$ is filled with the sequence:
    \[
     (a_{j}g(2k+1) + \ell_{j},a_{1+j}(d+g)(2k+1) + \ell_{1+j},  \ldots,a_{k-1+j}((k-1)d+g)(2k+1)+ \ell_{k-1+j}),
    \]
    that is simple by construction (see Proposition \ref{Heffter system 1}). Finally, from $m_1 \equiv m_2 + j \pmod{2k+1}$ and $i=m_2$ it can be seen that the $(gk+i+1)$-th row of $H$ is filled with the sequence:
     \[
     (a_{i}(id+j)(2k+1)+\ell_{i}, a_{i+1}(id+j)(2k+1)+\ell_{i+1}, \dotsc, a_{i+k-1}(id+j)(2k+1)+\ell_{i+k-1}),
     \]
     that is a cyclic permutation of the ordering
     \[
     (a_{0}(id+j)(2k+1)+\ell_{0},a_{1}(id+j)(2k+1)+\ell_{1},\dotsc, a_{k-1}(id+j)(2k+1)+\ell_{k-1} ),
     \]
     that is simple by construction (see again Proposition \ref{Heffter system 1}). Hence, the rows and columns of $H$ are simple with respect to their natural ordering, and the array $H$ is a globally simple $H_n(n;k)$.

\end{proof}

\begin{example}\label{ex:HA1}
The following is a globally simple $\H_{15}(15;5)$ whose rows (columns, respectively) correspond to the blocks of the Heffter system $\mathcal{P}_0$ ($\mathcal{P}_1$, respectively) constructed in Example \ref{ex:n15k5}. We recall that the entries of the array are elements in $\Z_{165}$.
\[
\begin{array}{|c|c|c|c|c|c|c|c|c|c|c|c|c|c|c|}\hline
-10 & -2 & 3 & 4 & 5 &   &   &   &   &   &   &   &   &   &  \\ \hline
-68 & 69 & -62 & 38 & 23 &   &   &   &   &   &   &   &   &   &  \\ \hline
-30 & 37 & 71 & 56 & 31 &   &   &   &   &   &   &   &   &   &  \\ \hline
-29 & -61 & -76 & -35 & 36 &   &   &   &   &   &   &   &   &   &  \\ \hline
-28 & -43 & 64 & -63 & 70 &   &   &   &   &   &   &   &   &   &  \\ \hline
  &   &   &   &   & 1 & -24 & 25 & -18 & 16 &   &   &   &   &  \\ \hline
  &   &   &   &   & 75 & -74 & 81 & 49 & 34 &   &   &   &   &  \\ \hline
  &   &   &   &   & -8 & 15 & 82 & 67 & 9 &   &   &   &   &  \\ \hline
  &   &   &   &   & -51 & -50 & -65 & -57 & 58 &   &   &   &   &  \\ \hline
  &   &   &   &   & -17 & -32 & 42 & -41 & 48 &   &   &   &   &  \\ \hline
  &   &   &   &   &   &   &   &   &   & 12 & -46 & 47 & -40 & 27\\ \hline
  &   &   &   &   &   &   &   &   &   & 53 & -52 & 59 & 60 & 45\\ \hline
  &   &   &   &   &   &   &   &   &   & 14 & -7 & -72 & 78 & -13\\ \hline
  &   &   &   &   &   &   &   &   &   & -73 & -39 & -54 & -79 & 80\\ \hline
  &   &   &   &   &   &   &   &   &   & -6 & -21 & 20 & -19 & 26\\ \hline
\end{array}
\]
\end{example}

The arrays constructed in Theorem \ref{thm:array1} have a block-diagonal structure, as shown in Example \ref{ex:HA1}, while the arrays we are going to construct in the next theorem have a diagonal structure, so it is convenient
to introduce the following notation.
If $H$ is an $n\times n$ array, for $i\in[1,n]$ we define the $i$-th diagonal
$$D_i=\{(i,1),(i+1,2),\ldots,(i-1,n)\}.$$
Here all arithmetic on the row and the column indices is performed modulo $n$, where the set of reduced residues is
$\{1,2,\ldots,n\}$.
We say that the diagonals $D_i,D_{i+1},\ldots, D_{i+r}$ are \emph{consecutive diagonals}.

\begin{definition}
  Let $k\geq 1$ be an integer. One says that a square Heffter array $H$ of size $n\geq k$ is \emph{cyclically $k$-diagonal} if the nonempty cells of $H$ are exactly those of $k$ consecutive diagonals.
\end{definition}

\begin{theorem}\label{thm:array2}
There exists a  cyclically $k$-diagonal globally simple $\H_p(p;k)$ for every prime $p\geq 3$ and every $k\in [3,p]$.  
\end{theorem}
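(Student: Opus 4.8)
The plan is to realize the relative Heffter space from Theorem~\ref{thm:hs2} as a square array and show that, with an appropriate choice of coordinates, the nonempty cells lie on $k$ consecutive diagonals and the natural orderings of rows and columns are exactly the simple orderings built into the construction. Recall from Theorem~\ref{thm:hs2} that for the prime $p$ and block size $k\in[3,p]$ we have the parallel classes $\mathcal{P}_j = \{B_{i,j}\mid i\in[0,p-1]\}$ with $B_{i,j} = \{a_m(i+jm)(2k+1)+\ell_m \mid m\in[0,k-1]\}$, and that these form a relative Heffter space of degree $p$. As in Theorem~\ref{thm:array1}, a relative Heffter array is obtained by taking two of these Heffter systems as the row-classes and column-classes: I would let the rows be indexed by the blocks of $\mathcal{P}_0$ and the columns by the blocks of $\mathcal{P}_1$, placing in the $(i+1,j+1)$-th cell the unique common element $B_{i,0}\cap B_{j,1}$ when it exists, and leaving the cell empty otherwise.

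First I would pin down the intersection pattern. Assume $x\in B_{i,0}\cap B_{j,1}$ with $x\equiv\ell_{m}\pmod{2k+1}$; since the $\ell_m$ lie in distinct cosets of $\langle 2k+1\rangle$ (this is what makes the systems relative Heffter systems), the congruence class of $x$ modulo $2k+1$ forces a single index $m$ on both sides. Matching the $\langle 2k+1\rangle$-components then yields $a_m\, i \equiv a_m(m{\cdot}1 + \text{(shift)})$ type relations; working these out as in the orthogonality argument of Theorem~\ref{thm:hs2}, I expect the nonempty cell in row $i$ to occur in column $j$ precisely when $j\equiv i+m\pmod p$ for the running index $m\in[0,k-1]$. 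This is exactly the statement that the filled cells are the $k$ consecutive diagonals $D_1,\ldots,D_k$ (up to relabeling), giving the cyclically $k$-diagonal structure. The key technical point is that $k\le p$ guarantees these $k$ diagonals are genuinely distinct modulo $p$, so no two filled entries collide in a single cell and each row and column receives exactly $k$ entries.

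Next I would verify global simplicity. The content of the array along a fixed row $i$ is, by the intersection computation, the sequence $\bigl(a_m(i+jm)(2k+1)+\ell_m\bigr)$ read in the order $m=0,1,\ldots,k-1$ dictated by the left-to-right column order $j = i, i+1,\ldots, i+k-1 \pmod p$; this is precisely (a cyclic shift of) the simple ordering of the block $B_{i,0}$ coming from the simple ordering $\omega$ of $L$, exactly as argued for the rows in Theorem~\ref{thm:array1}. Dually, the top-to-bottom reading of a fixed column should recover the simple ordering of the corresponding block of $\mathcal{P}_1$. The only subtlety relative to Theorem~\ref{thm:array1} is bookkeeping: in the diagonal layout the ``$m=0$'' entry of a block need not sit at the top or left, so the natural ordering is a cyclic rotation of the constructed simple ordering, and I must confirm that a cyclic shift of a simple ordering (in a block summing to zero) is still simple — which holds because the partial sums are all distinct and the full sum is $0$, so rotating merely shifts every partial sum by a constant.

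The main obstacle I anticipate is the indexing: matching the diagonal position $(i{+}m,\,\cdot)$ of each entry to the correct index $m$ in the block, and doing so uniformly for $k$ odd (using Lemma~\ref{sequence lemma}) and $k$ even (using Lemma~\ref{lem:seq_k_even}), since the two cases use different sequences $A$. I would handle this by keeping the argument generic in $A$ — the intersection and diagonality computations only use $\sum A = 0$ and $\sum_{m} m a_m = 0$, together with the coset-distinctness of the $\ell_m$, none of which depends on the parity of $k$. Once the diagonal pattern and the row/column orderings are identified with the simple orderings already established in Proposition~\ref{JMP Proposition} and Theorem~\ref{thm:hs2}, simplicity is inherited directly, and the array is the desired cyclically $k$-diagonal globally simple $\H_p(p;k)$.
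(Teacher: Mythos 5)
Your construction follows the same outline as the paper's proof: take the relative Heffter space of Theorem~\ref{thm:hs2}, pair $\mathcal{P}_0$ with a second parallel class of ``slope $\pm 1$'', fill the array cells with the (unique, by orthogonality) intersections, read off the diagonal structure from the coset computation, and conclude global simplicity from shift-invariance of simple orderings. However, your execution contains a sign error that leaves a real gap. With columns indexed by $\mathcal{P}_1$, equating the elements of $B_{i,0}$ and $B_{j,1}$ lying in the coset $\ell_m+\langle 2k+1\rangle$ gives $a_m\, i\,(2k+1)+\ell_m \equiv a_m(j+m)(2k+1)+\ell_m \pmod{p(2k+1)}$, hence (since $|a_m|\le 2<p$, so $a_m$ is a unit modulo $p$) $i\equiv j+m\pmod p$, i.e.\ $m\equiv i-j\pmod p$ --- not $j\equiv i+m\pmod p$ as you assert. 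The array is still cyclically $k$-diagonal (the diagonals simply run the other way), but reading row $i$ from left to right now meets the index $m$ in \emph{decreasing} cyclic order, so each row is a cyclic shift of the \emph{reversal} of the constructed simple ordering, not of the ordering itself. This is not an artifact of your particular choice: for a second class $\mathcal{P}_s$ the filled-cell relation is $m\equiv s^{-1}(i-j)\pmod p$, diagonality forces $s\equiv\pm1\pmod p$, and then the row reading and the column reading traverse $m$ in opposite directions, so exactly one of the two families of lines is necessarily a cyclic shift of the reversed ordering. Your claim that \emph{both} rows and columns recover cyclic shifts of the forward simple orderings therefore cannot hold, and the fact you would need --- that the reversal of a simple ordering of a zero-sum block is again simple --- is never stated or justified in your proposal.

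The gap is easy to fill: if $(b_0,\dots,b_{k-1})$ has pairwise distinct partial sums $c_0,\dots,c_{k-1}$ with $c_{k-1}=0$, then the reversed ordering has partial sums $-c_{k-2},-c_{k-3},\dots,-c_0,0$, which are again pairwise distinct; combined with the cyclic-shift invariance you did record, this closes the argument. This is precisely how the paper's proof copes with the same phenomenon: it pairs $\mathcal{P}_0$ with $\mathcal{P}_{p-1}$, so that $m\equiv j-i\pmod p$ and the rows are cyclic shifts of the forward ordering, and it then writes the column reading explicitly as a cyclic permutation of the $m$-decreasing ordering $\bigl(a_{k-1}(j+(p-1)(k-1))(2k+1)+\ell_{k-1},\dots,a_0\, j\,(2k+1)+\ell_0\bigr)$ before asserting its simplicity. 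The remaining ingredients of your proposal (uniqueness of intersections from orthogonality, $k\le p$ guaranteeing $k$ distinct diagonals, and the case-free treatment of $k$ odd and even since only $\sum A=0$, $\sum_m m\,a_m=0$ and invertibility of the $a_m$ modulo $p$ are used) match the paper and are correct.
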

\begin{proof}
 Let $p$ and $k$ be as in the statement. Let $\mathcal{H} = \{\mathcal{P}_s \mid s \in[0,p-1]\}$ be the $(pk,k;p)_p$ relative Heffter space of Theorem \ref{thm:hs2}, and consider the two Heffter systems $\mathcal{P}_0$ and $\mathcal{P}_{p-1}$.  Construct the $p \times p$ partially filled array $H$ whose $(i,j)$-th cell contains the element $B_{i-1,0} \cap B_{j-1,p-1}$ if it exists, and it is empty otherwise.  Clearly, $H$ is an $\H_p(p;k)$; in what follows, we show that it is cyclically $k$-diagonal and globally simple.

Assume that two blocks $B_{i-1,0} \in \mathcal{P}_0$ and $B_{j-1,p-1}\in \mathcal{P}_{p-1}$ share a common element $x$.
We recall that for each $i,j \in [0,p-1]$,
\[
\begin{aligned}
    B_{i-1,0} &= \{a_m(i-1)(2k+1)+ \ell_m \mid m\in[0,k-1]\} \\
    B_{j-1,p-1} &= \{a_m(j-1+m(p-1))(2k+1)+\ell_m \mid m\in[0,k-1]\},
\end{aligned}
\]
where $A = (a_0,\dotsc, a_{k-1})$ is the sequence of Lemma \ref{sequence lemma} if $k$ is odd, and of Lemma \ref{lem:seq_k_even} if $k$ is even, and $L = \{\ell_0, \dotsc, \ell_{k-1}\}$ is an integer half-set of $\Z_{2k+1}$ having a simple ordering $(\ell_0,\dotsc, \ell_{k-1})$, as shown in the proof of Theorem \ref{thm:hs2}.

From the expression of $x \in B_{i-1,0} \cap B_{j-1,n-1}$, with $x \equiv \ell_m \pmod{2k+1}$, we obtain the following equation:
\[
a_m (i-1)(2k+1)+ \ell_m \equiv a_m (j-1+m(p-1))(2k+1)+ \ell_m \pmod{p(2k+1)},
\]
that implies $i \equiv j+m(p-1) \pmod{p}$, hence $m \equiv j-i \pmod{p}$. Note that from this equivalence we deduce that the $(i,j)$-th cell of $H$ is filled if and only if $ j-i \pmod{p} \in [0, k-1]$, hence $H$ is cyclically $k$-diagonal. It then follows that the $i$-th row of $H$ read with respect to its natural ordering is a 
cyclic permutation of
\[
\omega = (a_0 (i-1) (2k+1)+\ell_0,a_1 (i-1)(2k+1) + \ell_1, \dotsc,
a_{k-2} (i-1)(2k+1)+\ell_{k-2},a_{k-1} (i-1)(2k+1)+\ell_{k-1}).
\] 
From Theorem \ref{thm:hs2} we have that $\omega$ is a simple ordering, and since a cyclic permutation of a simple ordering of a zero-sum set is simple as well,  the $i$-th row of $H$ read with respect to its natural ordering is simple. Similarly, it can be seen that the elements contained in the $j$-th column of $H$ read with respect to its natural ordering is a cyclic permutation of
\[
\begin{aligned}
\omega = (&a_{k-1} (j+(p-1)(k-1))(2k+1)+\ell_{k-1}, a_{k-2} (j+(p-1)(k-2))(2k+1)+\ell_{k-2}, \dotsc, \\ &a_{1} (j+p-1)(2k+1)+\ell_{1}, a_{0} j(2k+1)+\ell_{0}).
\end{aligned}
\] 
Therefore, also each column of $H$ is simple with respect to natural ordering.
We can conclude that $H$ is a cyclically $k$-diagonal globally simple  $\H_p(p;k)$. 
\end{proof}

\begin{remark}
In Theorems \ref{thm:array1} and \ref{thm:array2} we obtain the arrays starting from two particular parallel classes of the Heffter space constructed in Theorems \ref{thm:hs1} and \ref{thm:hs2}, respectively. Reasoning in the same way, it can be shown that a globally simple Heffter array can be constructed from each pair of distinct parallel classes. 
\end{remark}

\begin{example}
  The following is a globally simple cyclically $6$-diagonal $\H_7(7;6)$ whose rows (respectively columns) correspond to the blocks of the Heffter system $\mathcal{P}_0$
  (respectively $\mathcal{P}_6$) constructed in Example \ref{ex:n7k6}. We recall that the entries of the array are elements of $\mathbb{Z}_{91}$.
  

\[
\begin{array}{|r|r|r|r|r|r|r|} \hline
7 & 1 & -2 & 3 & -4 & -5 & \\ \hline
 & -19 & 27 & 11 & -10 & 9 & -18\\ \hline
-31 &  & -45 & -38 & 24 & -23 & 22\\ \hline
35 & -44 &  & 20 & -12 & 37 & -36\\ \hline
42 & -43 & 34 &  & -6 & 14 & -41\\ \hline
-28 & 29 & -30 & 21 &  & -32 & 40\\ \hline
-25 & -15 & 16 & -17 & 8 &  & 33\\ \hline
\end{array}
\]

\end{example}

\section{Orthogonal cycle decompositions and biembeddings}\label{sec:decomposition}
It is well known that Heffter arrays give rise to graph decompositions obtainable via difference
methods (see Section 5 of \cite{PD} for details). More generally, in \cite{BP} the authors use Heffter spaces to construct sets of mutually orthogonal cycle systems, and the same can also be done starting from \emph{relative} Heffter spaces. To explain this we firstly introduce some background on this topic.
By $K_{m\times n}$ we will denote the complete multipartite graph
with $m$ parts of size $n$, and we recall that a \textit{$k$-cycle decomposition}  of $K_{m\times n}$ is a set of $k$-cycles whose edges partition the edge-set of $K_{m\times n}$. Such a decomposition $\mathcal{D}$ is said to be $G$-\emph{regular} if the vertex set of $K_{m\times n}$ is an additive group $G$ and $C+g \in \mathcal{D}$ for every pair $(C,g)\in \mathcal{D} \times G$. Equivalently if, up to isomorphism, $G$ is an automorphism group of $\mathcal{D}$. If $G$ is a cyclic group one simply speaks of a \emph{cyclic}  decomposition.
Two cycle decompositions, say $\mathcal{D}$ and ${\mathcal{D}}'$, of a graph $K$ are \emph{orthogonal} if there is no cycle of $\mathcal{D}$ sharing more than one edge with a cycle of ${\mathcal{D}}'$. 

The construction of a set of mutually orthogonal cycle decompositions of the complete graph was recently considered in \cite{BP1,BP,BCP,KY}, but to our knowledge, there are no results on sets of size greater than two of mutually orthogonal cycle decompositions of the complete multipartite graph. 
On the other hand, if there exists a simple relative Heffter array, then there exist two orthogonal cycle decompositions of the complete multipartite graph. To explain this we have to introduce some notation. Given an $n \times n$ partially filled array $H$,
we will denote by $\E(H)$ the set of the elements of the filled cells of $H$.
Analogously, by $\E(R_i)$ and $\E(C_j)$ we mean the elements of the $i$-th row and of the $j$-th column, respectively, of $H$.
Also, by $\omega_{R_i}$ and $\omega_{C_j}$ we will
denote, respectively,  an ordering of $\E(R_i)$ and of $\E(C_j)$.  If for any $i,j\in[1,n]$, the orderings $\omega_{R_i}$ and $\omega_{C_j}$ are simple, we denote by
  $\omega_r=\omega_{R_1}\circ \ldots \circ\omega_{R_n}$ the simple ordering for the rows and
  by $\omega_c=\omega_{C_1}\circ \ldots \circ\omega_{C_n}$ the simple ordering for the columns.
 The relationship between simple relative Heffter arrays and cyclic cycle decompositions of the complete multipartite graph is explained in detail in
\cite{CMPP}. Here we briefly recall the following result.
 \begin{proposition}\label{HeffterToDecompositions}\cite[Proposition 2.9]{CMPP}
  Let $H$ be a simple $\H_t(n;k)$ with respect to the orderings $\omega_r$ and $\omega_c$. Then there exist two cyclic $k$-cycle decompositions $\D_{\omega_r}$ and $\D_{\omega_c}$ of $K_{\frac{2nk+t}{t}\times t}$. Moreover the decompositions $\D_{\omega_r}$ and $\D_{\omega_c}$ are orthogonal.
\end{proposition}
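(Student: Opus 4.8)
The plan is to realise both decompositions by the classical difference method, taking $\Z_w$ (with $w=2nk+t$) as the common vertex set of $K_{\frac{w}{t}\times t}$ and the $t$-element cosets of the order-$t$ subgroup $J\le\Z_w$ as its parts; thus two vertices are adjacent exactly when their difference lies in $\Z_w\setminus J$. First I would attach to each row $R_i$ a base $k$-cycle. Writing the given simple ordering as $\omega_{R_i}=(b_0,\dots,b_{k-1})$ with partial sums $c_l=\sum_{j=0}^{l}b_j$, I set $\G_{R_i}=(0,c_0,c_1,\dots,c_{k-2})$, a closed walk whose consecutive differences are the $b_j$ and which closes up because the row is zero-sum, so $c_{k-1}=0$. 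Simplicity says the $c_l$ are pairwise distinct, and together with $c_{k-1}=0$ this makes $0,c_0,\dots,c_{k-2}$ distinct, so $\G_{R_i}$ is a genuine $k$-cycle; since every $b_j\in\Z_w\setminus J$, each of its edges is a legitimate inter-part edge of $K_{\frac{w}{t}\times t}$. The same recipe applied to the columns with the orderings $\omega_{C_j}$ produces a second family of base cycles $\G_{C_j}$.

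Next I would define $\D_{\omega_r}=\{\G_{R_i}+g : i\in[1,n],\ g\in\Z_w\}$ and $\D_{\omega_c}$ analogously, so that both decompositions are cyclic by construction. To see that $\D_{\omega_r}$ is a decomposition I invoke the difference method: the multiset of edge-differences of the base cycles is exactly $\{\pm b : b\in V\}$, where $V=\E(H)$ is the underlying half-set. Because $V\cup(-V)=\Z_w\setminus J$ with $V\cap(-V)=\emptyset$ (forced by the cardinalities, in particular ruling out $2$-torsion in $\Z_w\setminus J$), every element of $\Z_w\setminus J$ occurs exactly once as a base difference; hence each edge of $K_{\frac{w}{t}\times t}$ lies in exactly one translate, the translates are edge-disjoint, and they cover all edges. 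The edge count $nk\cdot w$ matches the number of inter-part edges, confirming that $\D_{\omega_r}$, and likewise $\D_{\omega_c}$, is a $k$-cycle decomposition.

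Finally, for orthogonality, to each edge $e$ I assign the unique element $d_e\in V$ lying in $\{d,-d\}$, where $d$ is the difference across $e$; then the unique cycle of $\D_{\omega_r}$ through $e$ is the translate of $\G_{R_i}$ with $d_e\in\E(R_i)$, and the unique cycle of $\D_{\omega_c}$ through $e$ is the translate of $\G_{C_j}$ with $d_e\in\E(C_j)$. Now suppose a row-cycle $\Gamma_r\in\D_{\omega_r}$ and a column-cycle $\Gamma_c\in\D_{\omega_c}$ share two edges $e\neq e'$. Then $d_e,d_{e'}\in\E(R_i)\cap\E(C_j)$ for the fixed indices $i,j$ determined by $\Gamma_r,\Gamma_c$; but $\E(R_i)\cap\E(C_j)$ is the content of the single cell $(i,j)$ of $H$, hence has at most one element, forcing $d_e=d_{e'}$. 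Since a fixed base cycle contains each half-set value exactly once (its differences are the $k$ distinct entries of that line), a single value cannot label two distinct edges of $\Gamma_r$, so $e=e'$, a contradiction; therefore $|E(\Gamma_r)\cap E(\Gamma_c)|\le 1$, which is orthogonality. I expect this last step to be the main obstacle: the delicate part is the bookkeeping of signs and translations so that each edge is unambiguously tagged by a half-set value together with the correct pair $(\text{row},\text{column})$, and then the conversion of ``two shared edges'' into ``two entries common to one row and one column'', which the array structure forbids.
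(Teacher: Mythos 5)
Your proof is correct, and it is essentially the argument behind the result as it appears in the literature: the paper itself offers no proof, importing the statement from \cite[Proposition 2.9]{CMPP}, and your difference-method construction (base $k$-cycles built from the partial sums, translation under $\Z_{2nk+t}$, the cardinality argument forcing $V\cap(-V)=\emptyset$, and the labelling of each edge by its unique half-set representative so that two shared edges become two entries in the single cell $(i,j)$) is precisely the standard proof given there. No gaps.
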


Since the relative Heffter spaces constructed in Section \ref{sec:main} are simple,
here we obtain, as a consequence, sets with many mutually orthogonal cycle decompositions of the complete multipartite graph. In fact, it is not hard to see that the following proposition holds.
\begin{proposition}\label{prop:space+decomp}
 If there exists a simple $(nk, k;r)_t$ relative Heffter space over $G$, then there exist $r$ mutually orthogonal $G$-regular $k$-cycle decompositions of $K_{\frac{nk}{t}\times t}$.
\end{proposition}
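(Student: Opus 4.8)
The plan is to mimic and generalise the difference-method argument behind Proposition \ref{HeffterToDecompositions}, applying it to each of the $r$ parallel classes simultaneously. Write $G$ for the abelian group of order $2nk+t$ underlying the space, $J\le G$ for the distinguished subgroup of order $t$, and $V$ for the half-set on which the space lives, so that $G\setminus J=V\cup(-V)$ with $V\cap(-V)=\emptyset$. Let $\G$ be the complete multipartite graph whose vertex set is $G$ and whose parts are the cosets of $J$; thus two vertices are adjacent exactly when their difference lies in $G\setminus J$, so there are $[G:J]=\frac{2nk+t}{t}$ parts each of size $t$. The group $G$ acts on $\G$ by translation, and the edge-orbits are indexed by the unordered pairs $\{\delta,-\delta\}$ with $\delta\in G\setminus J$; since the existence of a half-set forbids involutions in $G\setminus J$, every such orbit has full length $|G|$. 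Denote the $r$ mutually orthogonal relative Heffter systems by $\mathcal{P}_1,\dots,\mathcal{P}_r$.

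First I would turn each system $\mathcal{P}_i$ into a $G$-regular $k$-cycle decomposition $\D_i$ of $\G$. Fix a block $B\in\mathcal{P}_i$ and a simple ordering $(b_0,\dots,b_{k-1})$ of it, with partial sums $c_m=\sum_{j=0}^m b_j$. Simplicity guarantees that $c_0,\dots,c_{k-1}$ are pairwise distinct, while the zero-sum property gives $c_{k-1}=0$; hence the closed walk on the vertices $0,c_0,c_1,\dots,c_{k-2}$ is a genuine $k$-cycle $C_B$, whose consecutive differences are $b_0,\dots,b_{k-1}$ and whose edge-orbits are therefore exactly the pairs $\{\pm b:b\in B\}$. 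As $B$ ranges over $\mathcal{P}_i$ the blocks partition $V$, so the induced pairs exhaust $G\setminus J=V\cup(-V)$ with multiplicity one; by the standard difference method (cf. \cite{CMPP,PD}) this is precisely the condition for the $G$-translates of $\{C_B:B\in\mathcal{P}_i\}$ to partition $E(\G)$. This yields the desired $G$-regular decomposition $\D_i=\{C_B+g:B\in\mathcal{P}_i,\ g\in G\}$.

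Then I would verify mutual orthogonality. Fix $i\ne i'$ and cycles $C\in\D_i$, $C'\in\D_{i'}$; these are translates of base cycles $C_B$, $C_{B'}$ arising from blocks $B\in\mathcal{P}_i$ and $B'\in\mathcal{P}_{i'}$. Any edge common to $C$ and $C'$ lies in a single edge-orbit, hence corresponds to a pair $\{\delta,-\delta\}$ occurring simultaneously among $\{\pm b:b\in B\}$ and $\{\pm b':b'\in B'\}$; because $B,B'\subseteq V$ and $V$ meets each such pair in exactly one element, this forces a common element of $B$ and $B'$. Orthogonality of the Heffter systems gives $|B\cap B'|\le 1$, so there is at most one such shared pair, and since each base cycle uses every one of its difference pairs on exactly one edge, $C$ and $C'$ share at most one edge. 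Thus $\D_i$ and $\D_{i'}$ are orthogonal, and the $r$ decompositions are mutually orthogonal.

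The routine parts are the ``closing the cycle'' bookkeeping and the invocation of the difference method, both of which are standard and parallel the case $r=2$ of Proposition \ref{HeffterToDecompositions}. The step that needs the most care, and the real content of the proposition, is the orthogonality argument, where one must track a shared edge back through the $G$-action and the difference correspondence to a genuine common element of the two blocks, so that the combinatorial hypothesis ``blocks meet in at most one point'' can be converted into the geometric conclusion ``cycles share at most one edge''.
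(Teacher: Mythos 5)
Your proposal is correct and takes essentially the same approach as the paper: the paper's entire proof is the remark that the proposition ``is nothing but a generalization of Proposition \ref{HeffterToDecompositions} and the proof can be obtained reasoning in the same way,'' which is precisely the difference-method argument you spell out (base $k$-cycles from the partial sums of a simple ordering, developed under the $G$-action, with orthogonality of the Heffter systems converting a second shared edge into a second shared block element). One side remark: the number of parts you obtain, $\frac{2nk+t}{t}$, differs from the $\frac{nk}{t}$ written in the statement, but yours is the intended value --- the statement contains a typo, as one sees by comparing with Proposition \ref{HeffterToDecompositions} and with the graphs $K_{(2k+1)\times n}$ and $K_{(2k+1)\times p}$ appearing in the two theorems deduced from this proposition.
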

The above proposition is nothing but a generalization of Proposition \ref{HeffterToDecompositions} and the proof can be obtained reasoning in the same way.
This connection between simple relative Heffter spaces and cycle decompositions allows us to state the following results.

\begin{theorem}\label{Heffter system existence prop}
  Let $n$ be an odd integer and $k$ be a divisor of $n$. Then there exist at least $\Phi(k)+1$ mutually orthogonal
  cyclic $k$-cycle decompositions of $K_{(2k+1)\times n}$.
\end{theorem}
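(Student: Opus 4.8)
The plan is to apply Proposition \ref{prop:space+decomp} directly to the relative Heffter space produced in Theorem \ref{thm:hs1}. By that theorem, for every odd integer $n$ and every divisor $k$ of $n$ there exists a simple cyclic $(nk,k;\Phi(k)+1)_n$ relative Heffter space over $G=\mathbb{Z}_{n(2k+1)}$, relative to the subgroup $J=\langle 2k+1\rangle$ of order $t=n$. The degree of this space is $r=\Phi(k)+1$, and simplicity is precisely the hypothesis needed to feed the space into Proposition \ref{prop:space+decomp}.

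First I would record the relevant parameters: here $t=n$ (the order of $J=\langle 2k+1\rangle$ is exactly $n$, since $\langle 2k+1\rangle$ has index $2k+1$ in $\mathbb{Z}_{n(2k+1)}$), so the number of parts in the resulting complete multipartite graph is $\tfrac{nk}{t}=\tfrac{nk}{n}=k$ and each part has size $t=n$. Wait — I should double-check the parts/part-size reading against the stated conclusion. The target graph is $K_{(2k+1)\times n}$, i.e.\ $2k+1$ parts each of size $n$. Reconciling this with Proposition \ref{prop:space+decomp}, which yields $K_{\frac{nk}{t}\times t}$, requires the parameters to satisfy $\frac{nk}{t}=2k+1$ and $t=n$; these are incompatible unless one interprets the total vertex count $\frac{nk}{t}\cdot t = nk$ differently. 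Thus the key verification step is to compute the number and size of the parts correctly from the ambient group $\mathbb{Z}_{n(2k+1)}$ and the subgroup $J=\langle 2k+1\rangle$: the vertex set of the decomposition is $G/J$-structured, giving $|G|/t = n(2k+1)/n = 2k+1$ parts corresponding to the cosets of $J$, each of size $t=n$. This matches $K_{(2k+1)\times n}$ exactly, and I would present this coset-counting as the substantive (though short) check.

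With the parameter identification in hand, the proof is essentially a one-line invocation: apply Proposition \ref{prop:space+decomp} with $G=\mathbb{Z}_{n(2k+1)}$, $J=\langle 2k+1\rangle$, block size $k$, and degree $r=\Phi(k)+1$, obtaining $\Phi(k)+1$ mutually orthogonal $\mathbb{Z}_{n(2k+1)}$-regular (hence cyclic) $k$-cycle decompositions of $K_{(2k+1)\times n}$. The $r$ distinct parallel classes $\mathcal{P}_s$ of the space translate into $r$ pairwise orthogonal decompositions, orthogonality of the decompositions being inherited from the mutual orthogonality of the Heffter systems.

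The main obstacle is nothing deep: it is entirely a matter of correctly threading the parameters $t=n$ and $\frac{nk}{t}=k$ versus $\frac{|G|}{t}=2k+1$ through Proposition \ref{prop:space+decomp}, and making sure the stated graph $K_{(2k+1)\times n}$ is read off from the cosets of $J$ rather than from a naive substitution into the formula. Once the coset structure of $\mathbb{Z}_{n(2k+1)}$ modulo $\langle 2k+1\rangle$ is spelled out, everything else is a direct citation of the earlier results, and no new combinatorial construction is required.
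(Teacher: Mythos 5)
Your proposal is correct and is essentially the paper's own proof: the paper proves this theorem by simply citing Theorem \ref{thm:hs1} together with Proposition \ref{prop:space+decomp}. The parameter discrepancy you flag is real --- the graph in Proposition \ref{prop:space+decomp} should read $K_{\frac{2nk+t}{t}\times t}$ (consistent with Proposition \ref{HeffterToDecompositions}) rather than $K_{\frac{nk}{t}\times t}$, and your coset count giving $|G|/t = n(2k+1)/n = 2k+1$ parts of size $n$ is exactly the right resolution of that typo.
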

\begin{proof}
The result follows by Theorem \ref{thm:hs1} and Proposition \ref{prop:space+decomp}.
\end{proof}

\begin{theorem}
    For every prime $p\geq 3$ and $k\in[3,p]$, there exist at least $p$ mutually orthogonal cyclic $k$-cycle decompositions of $K_{(2k+1)\times p}$.
\end{theorem}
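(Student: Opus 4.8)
The plan is to derive this statement as an immediate consequence of the machinery already assembled in the preceding sections, in exact parallel to the proof of Theorem \ref{Heffter system existence prop}. Concretely, I would invoke Theorem \ref{thm:hs2}, which guarantees a simple cyclic $(pk,k;p)_p$ relative Heffter space for every prime $p \geq 3$ and every $k \in [3,p]$, and then feed this space into Proposition \ref{prop:space+decomp}, the bridge result connecting simple relative Heffter spaces to mutually orthogonal cycle decompositions of a complete multipartite graph.

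The only genuine task is to verify that the parameters line up. Proposition \ref{prop:space+decomp} asserts that a simple $(nk,k;r)_t$ relative Heffter space over $G$ yields $r$ mutually orthogonal $G$-regular $k$-cycle decompositions of $K_{\frac{nk}{t}\times t}$. Here the space from Theorem \ref{thm:hs2} has $n = p$, block size $k$, degree $r = p$, and is relative to the subgroup $\langle 2k+1\rangle$ of order $t = p$ inside $\mathbb{Z}_{p(2k+1)}$. Substituting these values, the number of parts is
\[
\frac{nk}{t} = \frac{pk}{p} = k,
\]
and the part size is $t = p$. Wait --- this gives $K_{k \times p}$, not $K_{(2k+1)\times p}$ as stated.

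The discrepancy flags the point I would need to resolve carefully: the parameter identification in Proposition \ref{prop:space+decomp} uses $t$ as both the subgroup order and the part size, yet in the companion result Theorem \ref{Heffter system existence prop} (whose proof cites Theorem \ref{thm:hs1} and Proposition \ref{prop:space+decomp}) the relevant relative Heffter space is the $(nk,k;\Phi(k)+1)_n$ space, for which $t = n$ and $\frac{nk}{t} = k$ parts of size $n$, yielding $K_{k\times n}$ --- again not the asserted $K_{(2k+1)\times n}$. Thus both theorems in this final section are stated with the same multipartite graph $K_{(2k+1)\times(\text{part size})}$, which suggests the intended identification treats $2k+1$ as the number of parts. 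The most plausible reading is that the underlying group is $\mathbb{Z}_{n(2k+1)}$ (respectively $\mathbb{Z}_{p(2k+1)}$) with the complete multipartite graph having its vertex set identified with this group, so that the $(2k+1)$ parts arise from the cosets of the subgroup of order $n$ (respectively $p$), each part having that size. Therefore, after fixing the bookkeeping so that the roles of ``number of parts'' and ``part size'' are assigned consistently with Theorem \ref{Heffter system existence prop}, the proof reduces to a single sentence.

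\begin{proof}
The result follows by Theorem \ref{thm:hs2} and Proposition \ref{prop:space+decomp}.
\end{proof}

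The only step requiring attention is confirming that $K_{(2k+1)\times p}$ is indeed the correct graph; this is settled by matching the construction to the completely analogous Theorem \ref{Heffter system existence prop}, where the identical $2k+1$ factor appears for the same structural reason, namely that the ambient group $\mathbb{Z}_{p(2k+1)}$ decomposes into $2k+1$ cosets of the order-$p$ subgroup, and these cosets furnish the parts of the multipartite graph on which the $p$ mutually orthogonal cyclic $k$-cycle decompositions are realized.
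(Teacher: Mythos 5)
Your proposal is correct and follows exactly the paper's proof, which is the same one-line argument: the result follows from Theorem \ref{thm:hs2} combined with Proposition \ref{prop:space+decomp}. Your parameter check is also well taken: the expression $K_{\frac{nk}{t}\times t}$ in Proposition \ref{prop:space+decomp} is evidently a misprint for $K_{\frac{2nk+t}{t}\times t}$ (consistent with Proposition \ref{HeffterToDecompositions}, since the graph is the Cayley graph on $G$ whose parts are the cosets of $J$), and with that correction the space of Theorem \ref{thm:hs2}, living in $\mathbb{Z}_{p(2k+1)}$ relative to the order-$p$ subgroup $\langle 2k+1\rangle$, yields precisely the $2k+1$ parts of size $p$, i.e.\ $K_{(2k+1)\times p}$, as you concluded.
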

\begin{proof}
The result follows by Theorem \ref{thm:hs2} and Proposition \ref{prop:space+decomp}.
\end{proof}
Since the decompositions so constructed are cyclic, to describe them it is sufficient to give a set of the so- called \emph{base blocks}, then all the
other cycles of the decompositions can be obtained considering the orbit of the base blocks under the natural action of the cyclic group. 
\begin{example}
Starting from the blocks of the seven parallel classes of the simple relative Heffter space presented in Example \ref{ex:n7k6}, we construct the base blocks of seven mutually orthogonal cyclic  $6$-cycle decompositions of $K_{13 \times 7}$ under the action of the group $\mathbb{Z}_{91}$. In the following, let $\mathcal{D}_j$ denote the decomposition obtained from the parallel class $\mathcal{P}_j$, where $j\in[0,6]$. In detail, let $C_{i,j}$ be the graph whose vertices are the ordered partial sums of $B_{i,j}$, for each $i,j\in [0,6]$. Since each block $B_{i,j}$ has size $6$, sums to zero, and the ordering is simple, the resulting graph $C_{i,j}$ is a $6$-cycle. In particular we denote the vertices of the cycles with an element in the range $[-45,45]$, since, in this way, it is easier to check that the vertices are actually pairwise distinct.

\begin{center}
\begin{tabular}{|c|c|c|c|}\hline
& $\mathcal{D}_0$& $\mathcal{D}_1$& $\mathcal{D}_2$ \\ \hline
   $C_{0,j}$&$(  7 , 8 , 6 , 9 , 5 , 0)$  &$(  7 , 34 , -33 , 22 , -21 , 0)$  &$(  7 , -31 , 19 , 35 , 44 , 0)$  \\ \hline 
   $C_{1,j}$&$(  -19 , 8 , 19 , 9 , 18 , 0)$  &$(  -19 , 34 , -20 , 22 , -8 , 0)$  &$(  -19 , -31 , 32 , 35 , -34 , 0)$  \\ \hline
  $C_{2,j}$&$(  -45 , 8 , 32 , 9 , 31 , 0)$  &$(  -45 , 34 , -7 , 22 , 5 , 0)$  &$(  -45 , -31 , 45 , 35 , -21 , 0)$  \\ \hline
  $C_{3,j}$&$(  20 , 8 , 45 , 9 , 44 , 0)$  &$( 20 , 34 , 6 , 22 , 18 , 0)$  &$( 20 , -31 , -33 , 35 , -8 , 0)$  \\ \hline
  $C_{4,j}$&$(  -6 , 8 , -33 , 9 , -34 , 0)$  &$( -6 , 34 , 19 , 22 , 31 , 0)$  &$(  -6 , -31 , -20 , 35 , 5 , 0)$  \\ \hline
  $C_{5,j}$&$( -32 , 8 , -20 , 9 , -21 , 0)$  &$(-32 , 34 , 32 , 22 , 44 , 0)$  &$(-32 , -31 , -7 , 35 , 18 , 0)$  \\ \hline
  $C_{6,j}$&$( 33 , 8 , -7 , 9 , -8 , 0)$  &$( 33 , 34 , 45 , 22 , -34 , 0)$  &$( 33 , -31 , 6 , 35 ,  31 , 0)$  \\ \hline
 \end{tabular}
 \end{center}
 \begin{center}
 \begin{tabular}{|c|c|c|c|}\hline
& $\mathcal{D}_3$& $\mathcal{D}_4$& $\mathcal{D}_5$ \\ \hline
    $C_{0,j}$&$(  7 , -5 , -20 , -43 , 18 , 0)$  &$(  7 , 21 , 32 , -30 , -8 , 0)$  &$(  7 , -44 , -7 , -17 , -34 , 0)$  \\ \hline
    $C_{1,j}$&$( -19 , -5 , -7 , -43 , 31 , 0)$  &$(  -19 , 21 , 45 , -30 , 5 , 0)$  &$(  -19 , -44 , 6 , -17 , -21 , 0)$  \\ \hline
    $C_{2,j}$&$(  -45 , -5 , 6 , -43 , 44 , 0)$  &$(  -45 , 21 , -33 , -30 ,18 , 0)$  &$(  -45 , -44 , 19 , -17 , -8 , 0)$  \\ \hline
    $C_{3,j}$&$(  20 , -5 , 19 , -43 , -34 , 0)$  &$(  20 , 21 , -20 , -30 , 31 , 0)$  &$(  20 , -44 , 32 , -17 , 5 , 0)$  \\ \hline
    $C_{4,j}$&$(  -6 , -5 , 32 , -43 , -21 , 0)$  &$(  -6 , 21 , -7 , -30 , 44 , 0)$  &$( -6 , -44 , -45 , -17 , 18 , 0)$  \\ \hline
    $C_{5,j}$&$( -32 , -5 , 45 , -43 , -8 , 0)$  &$( -32 , 21 , 6 , -30 , -34 , 0)$  &$( -32 , -44 , -33 , -17 , 31 ,0)$  \\ \hline
    $C_{6,j}$&$(  33 , -5 , -33 , -43 , 5 , 0)$  &$( 33 , 21 , 19 , -30 , -21 , 0)$  &$(  33 , -44 , -20 , -17 , 44 , 0)$  \\ \hline
  \end{tabular}

   \end{center}
 \begin{center}
  \begin{tabular}{|c|c|} \hline
  &$\mathcal{D}_6$ \\ \hline 
  $C_{0,j}$ &$(  7 , -18 , -46 , -4 , 31 , 0)$  \\ \hline
  $C_{1,j}$ &$(  -19 , -18 , -33 , -4 , -44 , 0)$  \\ \hline
 $C_{2,j}$ &$(  -45 , -18 , -20 , -4 , -34 , 0)$  \\ \hline
 $C_{3,j}$ &$(  20 , -18 , -7 , -4 , -21 , 0)$  \\ \hline
 $C_{4,j}$ &$(  -6 , -18 , 6 , -4 , -8 , 0)$  \\ \hline
 $C_{5,j}$ &$(  -32 , 18 , 19 , -4 , 5 , 0)$  \\ \hline
 $C_{6,j}$ &$(  33 , -18 , 32 , -4 , 18 , 0)$  \\ \hline
 \end{tabular}
\end{center}
\end{example}

We conclude this section by showing that, as a consequence of results in Section 4, we can obtain new results concerning biembeddings of cycle decompositions. Actually, in \cite{A}, one Archdeacon's main motivations for defining Heffter arrays was due to their applications, in particular, owing to their usefulness in identifying biembeddings of cycle decompositions.
Then in \cite{CPP}, generalizing some of Archdeacon's results, the authors showed how starting from a relative
Heffter array it is also possible to obtain suitable biembeddings. Firstly, we need to recall some definitions and results,
we start from the following definition, see \cite{Moh}.

\begin{definition}
An \emph{embedding} of a graph $\Gamma$ in a surface $\Sigma$ is a continuous injective
mapping $\psi: \Gamma \to \Sigma$, where $\G$ is viewed with the usual topology as $1$-dimensional simplicial complex.
\end{definition}

The  connected components of $\Sigma \setminus \psi(\Gamma)$ are called $\psi$-\emph{faces}.
If each $\psi$-face is homeomorphic to an open disc, then the embedding $\psi$ is said to be \emph{cellular}.

\begin{definition}
 A \emph{biembedding} of  two cycle decompositions $\D$ and $\D'$ of a simple graph $\Gamma$  is a face $2$-colorable embedding
  of $\G$ in which one color class is comprised of the cycles in $\D$ and
  the other class contains the cycles in $\D'$.
\end{definition}

Given a  relative Heffter array $H=\H_t(n;k)$, the orderings $\omega_r$ and $\omega_c$ are said to be
\emph{compatible} if
$\omega_c \circ \omega_r$ is a cycle of length  $\E(H)$. The connection between relative Heffter arrays and biembeddings has been established in \cite{CPP} with the following result.

\begin{theorem}\cite[Theorem 3.4]{CPP}\label{thm:biembedding}
Let $H$ be a  relative Heffter array $\H_t(n;k)$ that is simple with respect to the compatible orderings $\omega_r$
and $\omega_c$.
Then there exists a cellular biembedding of the cyclic $k$-cycle decompositions $\mathcal{D}_{\omega_r^{-1}}$ and
$\mathcal{D}_{\omega_c}$ of $K_{\frac{2nk+t}{t}\times t}$ into an orientable surface of genus
$$g=1+\frac{(nk-2n-1)(2nk+t)}{2}.$$
\end{theorem}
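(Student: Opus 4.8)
The plan is to realize the biembedding combinatorially through a $\mathbb{Z}_w$-regular rotation system on the vertex set $\mathbb{Z}_w$, where $w=2nk+t$, and then to read off the genus from Euler's formula. First I would recall, via Proposition \ref{HeffterToDecompositions}, that simplicity of the two orderings already produces the cyclic $k$-cycle decompositions $\mathcal{D}_{\omega_r^{-1}}$ and $\mathcal{D}_{\omega_c}$ of $K_{\frac{2nk+t}{t}\times t}$, whose common vertex set is identified with $\mathbb{Z}_w$ (the parts being the cosets of $J$) and whose darts correspond to the \emph{directions} in $\mathbb{Z}_w\setminus J$, namely the entries $\E(H)$ together with their negatives. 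Since both decompositions are regular under the translation action of $\mathbb{Z}_w$, an embedding realizing them as the two colour classes of faces is completely determined by a single local rotation $\rho$ at the vertex $0$, the rotations at all other vertices being its translates.

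Second, I would construct $\rho$ from the two orderings. Viewing $\omega_r$ and $\omega_c$ as permutations of $\E(H)$ (each the product of the $n$ row, resp.\ column, $k$-cycles), I would let $\rho$ interchange $\E(H)$ and $-\E(H)$ by the rules $x\mapsto -\omega_r^{-1}(x)$ on $\E(H)$ and $-x\mapsto \omega_c(x)$ on $-\E(H)$. The purpose of this choice is the face-tracing rule: in a regular rotation system the boundary of a face is governed by the map $d\mapsto\rho(-d)$ on directions, so that the positive directions trace out translates of the $\omega_c$-column cycles while the negative directions trace out translates of the $\omega_r^{-1}$-row cycles. Here I would invoke the zero-sum property of each row and column to guarantee that each traced walk closes up after exactly $k$ steps, and simplicity to guarantee that its $k$ vertices, which are the ordered partial sums, are pairwise distinct, so that each face boundary is a genuine $k$-cycle. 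This simultaneously shows the embedding is cellular and that it is face-$2$-colourable with colour classes precisely $\mathcal{D}_{\omega_r^{-1}}$ and $\mathcal{D}_{\omega_c}$.

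Third, I would check that $\rho$ is a legitimate rotation, that is, a single cyclic permutation of the whole direction set $\mathbb{Z}_w\setminus J$, which is exactly where compatibility enters. Since $\rho$ swaps $\E(H)$ and $-\E(H)$, its square acts on $\E(H)$ as $\omega_c\circ\omega_r^{-1}$ (up to the orientation convention used in the definition of compatibility), and $\rho$ is a single $2nk$-cycle precisely when this square is a single $nk$-cycle, i.e.\ when $\omega_c\circ\omega_r$ is a cycle of length $|\E(H)|$. Thus the compatibility hypothesis is what guarantees that the rotation system embeds the connected graph $K_{\frac{2nk+t}{t}\times t}$ cellularly into a single surface; orientability is then automatic, since $\rho$ is a pure rotation and no edge signs are introduced.

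Finally, the genus follows from Euler's formula. With $V=2nk+t$ vertices, $E=nk(2nk+t)$ edges of $K_{\frac{2nk+t}{t}\times t}$, and $F=2n(2nk+t)$ faces (each decomposition contributing $\tfrac{E}{k}=n(2nk+t)$ cycles), one obtains $2-2g=V-E+F=(2nk+t)(1+2n-nk)$, which rearranges to the stated value $g=1+\frac{(nk-2n-1)(2nk+t)}{2}$. The step I expect to be the main obstacle is the bookkeeping in the second paragraph: pinning down the exact rules defining $\rho$, including the negations and the occurrence of $\omega_r^{-1}$ rather than $\omega_r$, so that the face-tracing map provably yields the two prescribed cycle families, and verifying that the precise convention in the definition of compatibility really coincides with the single-cycle condition on $\rho$.
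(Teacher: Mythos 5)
First, a point of comparison: this theorem is not proved in the paper at all — it is quoted from \cite[Theorem 3.4]{CPP} — so the only proof to measure yours against is the one in that reference, which is precisely the Archdeacon-style rotation-system argument you reconstruct: a $\mathbb{Z}_w$-invariant rotation built from the two orderings, face tracing producing the row and column cycles, compatibility guaranteeing that the rotation is a genuine cyclic permutation of all $2nk$ darts, and Euler's formula giving the genus. Your overall architecture, the role you assign to each hypothesis (zero sums for closure of the traced walks, simplicity for distinctness of their vertices, compatibility for the single-cycle condition), and the Euler computation ($V=2nk+t$, $E=nk(2nk+t)$, $F=2n(2nk+t)$, hence $2-2g=(2nk+t)(1+2n-nk)$) are all correct.

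There is, however, one concrete error, located exactly where you predicted trouble. With your $\rho$ (namely $x\mapsto-\omega_r^{-1}(x)$ on $\E(H)$ and $-x\mapsto\omega_c(x)$ on $-\E(H)$) and the face-tracing rule $d\mapsto\rho(-d)$, a negative-direction face through $0$ for a row ordered $(b_1,\dots,b_k)$ has vertices $0,\,-b_1,\,-b_1-b_k,\,-b_1-b_k-b_{k-1},\dots$; using the zero sum of the row, this vertex set is a translate of the partial sums of the \emph{forward} ordering, so these faces form $\mathcal{D}_{\omega_r}$, not $\mathcal{D}_{\omega_r^{-1}}$. Correspondingly $\rho^2|_{\E(H)}=\omega_c\circ\omega_r^{-1}$, so your construction needs $\omega_c\circ\omega_r^{-1}$ to be a full $nk$-cycle, which is neither the stated hypothesis nor equivalent to it: $\omega_c\circ\omega_r$ and $\omega_c\circ\omega_r^{-1}$ are in general permutations of different cycle structure, so the phrase ``up to the orientation convention'' does not close this gap. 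The repair is a one-symbol change: define $\rho(x)=-\omega_r(x)$ on $\E(H)$, keeping $\rho(-x)=\omega_c(x)$. Then $\rho^2|_{\E(H)}=\omega_c\circ\omega_r$, so the stated compatibility is exactly the single-cycle condition, and the negative-direction face through $0$ has vertices $0,\,-b_1,\,-(b_1+b_2),\dots,-(b_1+\cdots+b_{k-1})$, which by the zero sum coincide with the partial sums of the reversed ordering $(b_k,\dots,b_1)$ — i.e., exactly the cycles of $\mathcal{D}_{\omega_r^{-1}}$, matching the statement. With that correction your proof is complete and is essentially the proof given in \cite{CPP}.
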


The arrays constructed in the previous section are not only simple, but they are globally simple. Looking for compatible orderings in the case of a globally simple Heffter array led to investigate the following problem
introduced in \cite{CDP}.
Let $A$ be an $m\times n$ toroidal partially filled array. By $r_i$ we denote the orientation of the $i$-th row,
precisely $r_i=1$ if it is from left to right and $r_i=-1$ if it is from right to left. Analogously, for the $j$-th
column, if its orientation $c_j$ is  from  top to bottom then $c_j=1$ otherwise $c_j=-1$. Assume that an orientation
$\R=(r_1,\dots,r_m)$
 and $\C=(c_1,\dots,c_n)$ is fixed. Given an initial filled cell $(i_1,j_1)$ consider the sequence
$ L_{\R,\C}(i_1,j_1)=((i_1,j_1),(i_2,j_2),\ldots,(i_\ell,j_\ell),$ $(i_{\ell+1},j_{\ell+1}),\ldots)$
where $j_{\ell+1}$ is the column index of the filled cell $(i_\ell,j_{\ell+1})$ of the row $R_{i_\ell}$ next
to
$(i_\ell,j_\ell)$ in the orientation $r_{i_\ell}$,
and where $i_{\ell+1}$ is the row index of the filled cell of the column $C_{j_{\ell+1}}$ next to
$(i_\ell,j_{\ell+1})$ in the orientation $c_{j_{\ell+1}}$.
The problem is the following:

\begin{KN}
Given a toroidal partially filled array $H$,
do there exist $\R$ and $\C$ such that the list $L_{\R,\C}$ covers all the filled
cells of $H$?
\end{KN}

By $P(H)$ we will denote the \probname\ for a given array $H$.
Also, given a filled cell $(i,j)$, if $L_{\R,\C}(i,j)$ covers all the filled positions of $H$ we will
say that  $(\R,\C)$ is a solution of $P(H)$.
The relationship between the Crazy Knight's Tour Problem and globally simple relative Heffter arrays is explained in the following result which is an easy consequence of Theorem \ref{thm:biembedding}.

\begin{corollary}\cite[Corollary 3.5]{CPP}\label{preprecedente}
  Let $H$ be a globally simple relative Heffter array $\H_t(n;k)$  such that $P(H)$ admits a solution $(\R,\C)$.
  Then there exists a biembedding of the cyclic cycle decompositions $\mathcal{D}_{\omega_r^{-1}}$ and
$\mathcal{D}_{\omega_c}$ of $K_{\frac{2nk+t}{t}\times t}$ into an orientable surface.
\end{corollary}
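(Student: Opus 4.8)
The plan is to deduce the statement directly from Theorem \ref{thm:biembedding} by converting the solution of $P(H)$ into a pair of orderings that are simultaneously simple and \emph{compatible}. First I would use the solution $(\R,\C)$ to fix the orderings: for each row $R_i$ read its entries in the direction prescribed by $r_i$ (left to right if $r_i=1$, right to left if $r_i=-1$) to obtain $\omega_{R_i}$, and likewise read each column $C_j$ according to $c_j$ to obtain $\omega_{C_j}$; then set $\omega_r=\omega_{R_1}\circ\cdots\circ\omega_{R_n}$ and $\omega_c=\omega_{C_1}\circ\cdots\circ\omega_{C_n}$. The content of the corollary is precisely that these orderings satisfy the hypotheses of Theorem \ref{thm:biembedding}.

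The next step is to check that these orderings are simple. Since $H$ is globally simple, every row and every column is simple with respect to its natural ordering, so it suffices to observe that reversing the ordering of a zero-sum block preserves simplicity. Indeed, if a block with ordering $(b_0,\dots,b_{k-1})$ has distinct partial sums $c_0,\dots,c_{k-1}$ with $c_{k-1}=0$, then the partial sums of the reversed ordering $(b_{k-1},\dots,b_0)$ are, in order, $-c_{k-2},-c_{k-3},\dots,-c_0,0$, which are again pairwise distinct. Hence each $\omega_{R_i}$ and each $\omega_{C_j}$ is simple no matter which orientation $r_i$ or $c_j$ selects, and therefore $H$ is simple with respect to $\omega_r$ and $\omega_c$.

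It remains to verify compatibility, namely that $\omega_c\circ\omega_r$ is a single cycle of length $|\E(H)|$, and this is exactly where the hypothesis that $(\R,\C)$ solves $P(H)$ enters. By construction, applying $\omega_r$ sends a filled cell to the next filled cell of its row in the orientation $r_i$, and then applying $\omega_c$ sends the result to the next filled cell of that column in the orientation $c_j$; this is precisely one step of the list $L_{\R,\C}$ from the \probname. Consequently the orbit of $\omega_c\circ\omega_r$ through any filled cell coincides with $L_{\R,\C}$ started at that cell. Since $(\R,\C)$ is a solution, $L_{\R,\C}$ covers all filled cells, so $\omega_c\circ\omega_r$ is a single cycle of length $|\E(H)|$, which is exactly the definition of $\omega_r$ and $\omega_c$ being compatible.

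With $\omega_r$ and $\omega_c$ shown to be simple and compatible, Theorem \ref{thm:biembedding} applies verbatim and produces the desired cellular biembedding of $\mathcal{D}_{\omega_r^{-1}}$ and $\mathcal{D}_{\omega_c}$ of $K_{\frac{2nk+t}{t}\times t}$ into an orientable surface. I expect the only point requiring genuine care to be the bookkeeping in the third step: matching the combinatorial ``knight's move'' description of $L_{\R,\C}$ with the permutation $\omega_c\circ\omega_r$, keeping track of the row and column orientations so that the cycle structure of the composed permutation agrees exactly with the tour. The reversal observation for simplicity is routine, and once compatibility is in hand the conclusion is an immediate appeal to Theorem \ref{thm:biembedding}, which is why the result is an easy consequence of it.
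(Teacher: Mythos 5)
Your proposal is correct and takes exactly the route the paper intends: the paper states this corollary without proof (citing \cite{CPP}) and describes it as ``an easy consequence of Theorem \ref{thm:biembedding}'', which is precisely your deduction. Your three ingredients --- reading rows and columns in the directions prescribed by $(\R,\C)$, observing that reversing a simple ordering of a zero-sum block keeps the partial sums distinct (so global simplicity gives simplicity for either orientation), and identifying one step of $L_{\R,\C}$ with one application of $\omega_c\circ\omega_r$ so that a solution of $P(H)$ is the same as compatibility --- are exactly what is needed to apply Theorem \ref{thm:biembedding} verbatim.
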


In \cite{CDP} the authors proved that $P(H)$ admits a solution for several classes of (not necessarily square) arrays, here we recall only the results we need for the purpose of this paper.

\begin{theorem}\cite[Theorem 3.3]{CDP}
Let $H$ be a totally filled square array of order $n$. Then there exists a solution of
$P(H)$ if and only if $n$ is odd.
\end{theorem}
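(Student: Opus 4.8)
The plan is to recast the tour, in the totally filled case, as the iteration of a single explicit map on the torus $\Z_n\times\Z_n$, and then prove the two implications by a parity invariant and an explicit construction respectively. Since every cell of $H$ is filled, from a cell $(i,j)$ the next filled cell of row $i$ in the orientation $r_i$ is $(i,j+r_i)$, and the next filled cell of that column in the orientation $c_{j+r_i}$ is $(i+c_{j+r_i},\,j+r_i)$. Hence the list $L_{\R,\C}$ is exactly the forward orbit of the map
\[
\phi(i,j)=\bigl(i+c_{j+r_i},\; j+r_i\bigr)
\]
on $\Z_n\times\Z_n$, and $(\R,\C)$ solves $P(H)$ if and only if this orbit has length $n^2$.

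For the \emph{only if} direction I would use a parity invariant. Under $\phi$ the quantity $i+j$ changes by $c_{j+r_i}+r_i$, a sum of two elements of $\{-1,+1\}$, hence even; so $(i+j)\bmod 2$ is constant along the orbit. When $n$ is even this residue is a well-defined, non-constant function on $\Z_n\times\Z_n$ (the torus splits into two parity classes, each of size $n^2/2$), so any orbit is trapped in one class and cannot reach all $n^2$ cells. This shows no solution exists for even $n$.

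For the \emph{if} direction I would exhibit an explicit orientation when $n$ is odd. Take $r_i=+1$ for all $i$, so that $\phi(i,j)=(i+c_{j+1},\,j+1)$ and the column coordinate increases by $1$ at each step. After $\ell=tn$ steps the column returns to its starting value and the row has shifted by $t\sum_j c_j=tS$, where $S:=\sum_j c_j$, because the indices $j_0+1,\dots,j_0+n$ form a complete residue system modulo $n$. Consequently the orbit first closes after $n^2/\gcd(S,n)$ steps, so it attains the full length $n^2$ exactly when $\gcd(S,n)=1$. Since $n$ is odd I can choose $(n+1)/2$ of the $c_j$ equal to $+1$ and the remaining $(n-1)/2$ equal to $-1$, which gives $S=1$ and hence $\gcd(S,n)=1$; this $(\R,\C)$ is the desired solution.

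The main obstacle is the period computation in the last step: one must check not merely that a single distinguished column is eventually fully covered, but that the orbit never closes prematurely in any column, which is precisely the content of the identity "closure time $=n^2/\gcd(S,n)$." A minor point to verify along the way is that the sum of the $c_j$ over any window of $n$ consecutive (cyclic) columns equals $S$, which is what reduces the closure condition to $tS\equiv 0\pmod n$.
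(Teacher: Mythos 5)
This paper never proves the statement: it is imported verbatim from \cite{CDP} (Theorem 3.3 there) and used as a black box, so there is no internal proof to compare yours against, and I can only assess your argument on its own terms — it is correct and complete. The reduction of the tour, in the totally filled case, to iteration of $\phi(i,j)=\bigl(i+c_{j+r_i},\,j+r_i\bigr)$ on $\Z_n\times\Z_n$ matches the paper's definition of $L_{\R,\C}$ exactly. For even $n$, the residue $(i+j)\bmod 2$ is a well-defined function on $\Z_n\times\Z_n$ (this is precisely where evenness enters), it is $\phi$-invariant because each step changes $i+j$ by $c_{j+r_i}+r_i\in\{-2,0,2\}$, and it is non-constant, so no orbit is exhaustive for any choice of $\R$ and $\C$. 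For odd $n$, your choice $r_i\equiv+1$ makes the column advance by one per step, so the position after $m$ steps from $(i_0,j_0)$ is $\bigl(i_0+\sum_{\ell=1}^{m}c_{j_0+\ell},\,j_0+m\bigr)$; two positions can coincide only when the step indices differ by a multiple $tn$ of $n$, in which case the rows differ by $tS$ with $S=\sum_j c_j$, so the first repetition occurs at $m=n^2/\gcd(S,n)$ and the orbit is a cycle of exactly that length — this settles the premature-closure issue you flagged, since your explicit formula for the position determines every cell of the orbit, not just the returns to a distinguished column. Taking $(n+1)/2$ of the $c_j$ equal to $+1$ gives $S=1$; note that $S\equiv n\pmod 2$, so $S=1$ is achievable exactly because $n$ is odd, which is where oddness is genuinely needed in the construction. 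Your argument in fact yields slightly more than the statement: for odd $n$ and $r_i\equiv+1$, the pair $(\R,\C)$ solves $P(H)$ if and only if $\gcd\bigl(\sum_j c_j,\,n\bigr)=1$.
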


\begin{proposition}\cite[Proposition 4.6]{CDP}\label{prop:PH}
 Let $H$ be a cyclically $k$-diagonal array of size $n > k$. If $P(H)$ admits a solution
then $n$ and $k$ are both odd and $k\geq3$.
\end{proposition}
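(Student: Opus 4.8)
The plan is to reinterpret a solution of $P(H)$ as a statement about a single permutation of the filled cells, and then extract a parity obstruction from the sign of that permutation. First I would record the cyclically $k$-diagonal structure: since $n>k$, in every row the $k$ filled cells occupy $k$ consecutive columns (a proper arc of the cyclic order on $\Z_n$), and likewise every column contains exactly $k$ filled cells lying in $k$ consecutive rows. I index the filled cells by pairs $(i,s)$ with $i\in\Z_n$ the row and $s\in\Z_k$ the \emph{diagonal offset}, so that the cell $(i,s)$ sits in column $i+s\pmod n$.

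Next I would describe the two elementary moves as permutations of the set of $nk$ filled cells. Reading a row $R_i$ in the orientation $r_i$ sends a filled cell to the next filled cell of that row; because the filled columns of $R_i$ form an arc of length $k<n$, this is exactly the cyclic shift $s\mapsto s+r_i\pmod k$ within row $i$, and hence the horizontal move $H$ is a product of $n$ disjoint $k$-cycles (one per row). The analogous vertical move $V$, reading each column $C_j$ in orientation $c_j$, is a product of $n$ disjoint $k$-cycles (one per column). A single step of the list $L_{\R,\C}$ is the composition $T=V\circ H$, a permutation of the filled cells, and since $T$ is a bijection of a finite set, the list started at any cell traces precisely the $T$-cycle through that cell. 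Therefore $(\R,\C)$ is a solution of $P(H)$ if and only if $T$ is a single $nk$-cycle.

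The key step is the sign computation, which yields the same value for every choice of $\R$ and $\C$. A $k$-cycle has sign $(-1)^{k-1}$, so $\mathrm{sgn}(H)=(-1)^{n(k-1)}$ and $\mathrm{sgn}(V)=(-1)^{n(k-1)}$, whence $\mathrm{sgn}(T)=(-1)^{2n(k-1)}=1$; that is, $T$ is always an even permutation. On the other hand, an $nk$-cycle has sign $(-1)^{nk-1}$. If $P(H)$ admits a solution then $T$ is an $nk$-cycle, forcing $(-1)^{nk-1}=1$, i.e.\ $nk$ is odd, which means both $n$ and $k$ are odd.

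It remains to rule out $k=1$. When $k=1$ each row and each column contains a single filled cell, so $H$ and $V$ are both the identity and $T$ fixes every cell; as $n>k=1$ there are at least two filled cells, so no starting cell can cover them all. Hence $k\neq 1$, and together with $k$ odd this gives $k\geq 3$, completing the argument. I expect the only delicate point to be the verification that each elementary move is a \emph{single} $k$-cycle rather than several shorter cycles; this is precisely where the hypothesis $n>k$ and the arc structure of a cyclically $k$-diagonal array are used, and once it is in place the parity computation is immediate.
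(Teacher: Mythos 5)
Your argument is correct, but there is nothing in the paper to compare it with: Proposition \ref{prop:PH} is quoted from \cite{CDP} (Proposition 4.6 there) and appears here without proof, so your write-up is a self-contained substitute rather than a variant of anything internal to this paper. The reduction is the right one: a step of $L_{\R,\C}$ is the permutation $T=V\circ H$ of the $nk$ filled cells, and $(\R,\C)$ solves $P(H)$ precisely when $T$ is a single $nk$-cycle; since $H$ and $V$ are each products of $n$ disjoint $k$-cycles regardless of the chosen orientations, $\mathrm{sgn}(T)=(-1)^{2n(k-1)}=1$, while an $nk$-cycle has sign $(-1)^{nk-1}$, forcing $nk$ odd and hence $n$, $k$ both odd. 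Your separate treatment of $k=1$ is genuinely needed (parity does not exclude it, as $T$ is then the identity, which is even), and it is handled correctly: the orbit of any starting cell has size $1<n$. One small correction: the point you flag as delicate is in fact automatic. That the row move restricted to a row is a single $k$-cycle does not depend on the filled cells forming an arc, nor on $n>k$: for any placement of $k$ filled cells in a cyclic row, ``next filled cell in a fixed direction'' is the successor function of the induced cyclic order on those $k$ cells, hence always a $k$-cycle. Consequently your proof establishes the parity obstruction for any toroidal array with exactly $k$ filled cells in every row and every column, which is consistent with, and slightly more general than, the diagonal case stated here (compare the totally filled case $n=k$ in Theorem 3.3 of \cite{CDP}).
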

The necessary conditions of Proposition \ref{prop:PH} are also sufficient if $k\in[3,200]$, see \cite[Theorem 4.11]{CDP}.
\begin{proposition}\cite[Proposition 3.4]{CMPP2}\label{prop:PH2}
Let $k\geq3$ be an odd integer and let $H$ be a cyclically $k$-diagonal array of size
$n>k$. If $\gcd(n; k-1) = 1$, then $P(H)$ admits a solution.
\end{proposition}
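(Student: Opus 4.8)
The plan is to exhibit an explicit solution $(\R,\C)$ and then verify that the list it generates is a single $nk$-cycle. First I would fix coordinates: since $H$ is cyclically $k$-diagonal of order $n>k$, its filled cells are in bijection with $\Z_n\times\{0,\ldots,k-1\}$ via $(i,d)\mapsto$ the cell in row $i$ and column $i+d$, where $d$ records the diagonal. A single step of the Crazy Knight's Tour first moves horizontally inside row $i$ (changing $d$ by $r_i$ modulo $k$, since the $k$ filled columns of a row form a contiguous arc) and then vertically inside the reached column $j'$ (changing $d$ by $-c_{j'}$ modulo $k$). Thus the net change of $d$ per step is $r_i-c_{j'}\in\{-2,0,2\}$, and in particular it is always \emph{even}; this is exactly why the hypothesis ``$k$ odd'' (equivalently $\gcd(2,k)=1$) is what will let one orbit meet every diagonal.

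Next I would analyse the base orientation $r_i=+1$ for all rows and $c_j=+1$ for all columns. A short computation shows this preserves $d$ and acts on the row index by $i\mapsto i+1$ when $d\le k-2$ and by $i\mapsto i-(k-1)$ when $d=k-1$. Hence the base successor permutation $\pi$ splits into the $k$ diagonal orbits $O_0,\ldots,O_{k-1}$, and each is a single $n$-cycle: automatically when $d\le k-2$, and when $d=k-1$ precisely because $\gcd(n,k-1)=1$, which is where the second hypothesis is consumed. I would then perturb $\pi$ by reversing a single column, say $c_0=-1$ with all other orientations unchanged, obtaining a permutation $\sigma$. Reversing one column alters the step map only at cells whose horizontal move lands in column $0$, and each orbit $O_d$ contains exactly one such cell $t_d$ (as $i$ ranges over $\Z_n$ the horizontal images sweep out $\Z_n$ bijectively). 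So $\sigma$ agrees with $\pi$ off the $k$ cells $t_0,\ldots,t_{k-1}$, and at $t_d$ it reroutes $O_d$ into $O_{(d+2)\bmod k}$ instead of staying in $O_d$.

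Finally I would chain the reroutings into one long cycle. Because $k$ is odd, $d\mapsto d+2$ is a single $k$-cycle on $\Z_k$, so the reroutings visit all diagonals in one cyclic order. The decisive bookkeeping step is to check that the rerouted image $\sigma(t_d)$ coincides with $\pi(t_{(d+2)\bmod k})$, the base successor of the transition cell of the target diagonal; an explicit computation gives $\sigma(t_d)=(-((d+2)\bmod k),\,(d+2)\bmod k)$, which is exactly that cell. Consequently, on entering $O_{(d+2)\bmod k}$ the orbit resumes immediately after that diagonal's own transition cell, hence traverses all $n$ of its cells before the next jump; chaining through all $k$ diagonals produces one orbit of length $nk$, i.e. a solution $(\R,\C)$ of $P(H)$. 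I expect this alignment identity, where the modulo-$k$ wrap of the diagonal index must mesh with the modulo-$n$ wrap of the row index, to be the only genuinely delicate point, with the remaining work being routine verification of the two boundary diagonals $d=k-2$ and $d=k-1$ and of the fact that $\sigma$ is a bijection (automatic, as it is a tour-successor map).
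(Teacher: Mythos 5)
This proposition is not proved in the paper at all: it is imported verbatim, with citation, from \cite{CMPP2}, so there is no internal proof to compare your argument against; what follows is therefore an assessment of your proposal on its own terms. Your proof is correct, and the computations it hinges on check out. In your coordinates $(i,d)$ (row, diagonal), the all-positive successor map $\pi$ preserves each diagonal $O_d$ and acts by $i\mapsto i+1$ on $O_0,\dots,O_{k-2}$ and by $i\mapsto i-(k-1)$ on $O_{k-1}$, so every $O_d$ is a single $n$-cycle precisely because $\gcd(n,k-1)=1$; reversing only column $0$ alters the successor map exactly at the $k$ cells whose horizontal move lands in column $0$, which are $t_d=(-(d+1),d)$ for $d\le k-2$ and $t_{k-1}=(0,k-1)$, one per diagonal; and your alignment identity $\sigma(t_d)=\bigl(-((d+2)\bmod k),\,(d+2)\bmod k\bigr)=\pi\bigl(t_{(d+2)\bmod k}\bigr)$ does hold in all three cases $d\le k-3$, $d=k-2$, $d=k-1$ (the last two being the wrap-around cases you flagged as delicate). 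Since $k$ odd makes $d\mapsto d+2$ a single $k$-cycle on $\Z_k$, the perturbed orbit concatenates all $k$ diagonals, traversing each one completely between consecutive jumps, so $(\R,\C)=((1,\dots,1),(-1,1,\dots,1))$ solves $P(H)$, with both hypotheses consumed exactly where you say they are. Two cosmetic remarks: your coordinatization assumes the filled diagonals are those with $j-i\bmod n\in[0,k-1]$, which is harmless since any cyclically $k$-diagonal array can be brought to this form by a cyclic relabeling of rows, under which $P(H)$ is invariant; and the bijectivity of the successor map, used implicitly throughout the orbit bookkeeping, is indeed automatic, being a composition of row-wise and column-wise cyclic shifts of the filled cells. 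Whether or not this coincides with the argument given in \cite{CMPP2}, it stands as a complete, self-contained proof of the statement.
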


Our main result about biembeddings is the following.
\begin{theorem}
For every prime $p\geq 3$ and every $k\in[3,p]$, there exists a biembedding
of the cyclic $k$-cycle decompositions $\mathcal{D}_{\omega_r^{-1}}$ and
$\mathcal{D}_{\omega_c}$ of $K_{(2k+1)\times p}$ into an orientable surface.
\end{theorem}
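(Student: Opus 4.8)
The plan is to realize the biembedding as an instance of Corollary \ref{preprecedente}, thereby reducing the statement to producing a globally simple array together with a solution of its associated \probname\ $P(H)$. First I would invoke Theorem \ref{thm:array2} to obtain a cyclically $k$-diagonal globally simple $\H_p(p;k)$, say $H$. Since here $t=n=p$, we have $\frac{2nk+t}{t}=2k+1$, so that $K_{\frac{2nk+t}{t}\times t}=K_{(2k+1)\times p}$, exactly the graph in the statement. Once a solution $(\R,\C)$ of $P(H)$ is produced, Corollary \ref{preprecedente} at once yields the biembedding of $\mathcal{D}_{\omega_r^{-1}}$ and $\mathcal{D}_{\omega_c}$ into an orientable surface, so the whole problem is transferred onto the solvability of $P(H)$.

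I would then dispatch $P(H)$ according to how $k$ compares with $p$. If $k=p$, the array $H$ is totally filled of order $p$; as $p$ is odd, \cite[Theorem 3.3]{CDP} supplies a solution of $P(H)$. If $3\le k<p$ with $k$ odd, then $H$ is cyclically $k$-diagonal of size $n=p>k$, and since $p$ is prime with $2\le k-1<p$ we have $\gcd(p,k-1)=1$; Proposition \ref{prop:PH2} then provides a solution of $P(H)$. In both cases Corollary \ref{preprecedente} finishes the proof, and the genus may be read off from Theorem \ref{thm:biembedding}.

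The main obstacle is the remaining range of \emph{even} $k$ with $3\le k<p$, where Proposition \ref{prop:PH} shows that the cyclically $k$-diagonal array $H$ admits \emph{no} solution of $P(H)$, so the tour route is genuinely blocked. To bypass this I would use the freedom recorded in the Remark after Theorem \ref{thm:array2} and rebuild the array from the pair $\mathcal{P}_0,\mathcal{P}_{j}$ of the space of Theorem \ref{thm:hs2} with $j\neq p-1$: a short intersection computation (matching the common residue $\ell_m\pmod{2k+1}$ and then cancelling the factor $a_m(2k+1)$ modulo $p$) shows that the cell $(i,i')$ is filled exactly when $i-i'\in j\cdot[0,k-1]\pmod p$, so for $j\neq p-1$ the filled cells are no longer consecutive diagonals and Proposition \ref{prop:PH} no longer applies. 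The delicate step—the part I expect to demand the real work—is then to choose such a $j$ (or, failing an applicable tour result, to exhibit directly compatible simple orderings $\omega_r,\omega_c$ and invoke Theorem \ref{thm:biembedding} without passing through $P(H)$) so that the composition $\omega_c\circ\omega_r$ is a single $pk$-cycle; the primality of $p$ should be the decisive tool for excluding shorter sub-cycles.
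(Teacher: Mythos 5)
For odd $k$ your argument is precisely the paper's proof: the paper derives the theorem by citing Theorem \ref{thm:array2}, Corollary \ref{preprecedente} and Proposition \ref{prop:PH2}, and your observation that $\gcd(p,k-1)=1$ follows from the primality of $p$ is exactly the intended use of Proposition \ref{prop:PH2}. Your separate treatment of $k=p$ via \cite[Theorem 3.3]{CDP} is in fact \emph{more} careful than the paper's wording, since Proposition \ref{prop:PH2} formally requires $n>k$ and so does not cover the totally filled case on its own.

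The obstruction you raise for even $k$ is genuine, and the notable point is that the paper does not resolve it either: its entire proof consists of the three citations above, and Proposition \ref{prop:PH2} is stated only for odd $k$, while Proposition \ref{prop:PH} shows that the cyclically $k$-diagonal array of Theorem \ref{thm:array2} admits no solution of $P(H)$ whenever $k$ is even and $k<p$. So the published argument justifies the theorem only for odd $k$ (and $k=p$); the even case is asserted but not proved, and you detected this blind. Your sketched repair, however, does not close the gap. Writing the rows of your alternative array as $B_{ji,0}$ and the columns as $B_{ji',j}$ for $i,i'\in[0,p-1]$ shows that the array built from $\mathcal{P}_0$ and $\mathcal{P}_j$ is just a row-and-column permutation of a cyclically $k$-diagonal array; the Crazy Knight's Tour Problem is presentation-dependent, so Proposition \ref{prop:PH} indeed no longer \emph{formally} applies to your presentation, but no result quoted in the paper supplies a solution of $P(H)$ for it, and falling back on Theorem \ref{thm:biembedding} would require constructing compatible simple orderings from scratch, which you do not do. In short: where your proposal is complete it coincides with the paper's proof, and where it is incomplete the paper's proof is silently incomplete as well.
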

\begin{proof}
The result follows from Theorem \ref{thm:array2}, Corollary \ref{preprecedente} and Proposition \ref{prop:PH2}.
\end{proof}

\section*{Acknowledgements}
The research of the first author was funded by the LMS Cecil King Travel Scholarship and later supported by the Additional Funding Programme for Mathematical Sciences, delivered by EPSRC (EP/V521917/1) and the Heilbronn Institute for Mathematical Research. 
The second and the third author are partially supported by INdAM - GNSAGA. 
This work was supported in part by the European Union under the Italian National Recovery and Resilience Plan (NRRP) of NextGenerationEU, Partnership on “Telecommunications of the Future,” Program “RESTART” under Grant PE00000001, “Netwin” Project (CUP E83C22004640001).


\begin{thebibliography}{50}
\bibitem{A} D.S. Archdeacon, \textit{Heffter arrays and biembedding graphs on surfaces}, Electron. J. Comb. \textbf{22} (2015), P1.74.  

\bibitem{ADDY} D.S. Archdeacon, J.H. Dinitz, D.M. Donovan, E.\c{S}. Yaz\i c\i,
{\it Square integer Heffter arrays with empty cells},
Des. Codes Cryptogr. \textbf{77} (2015), 409--426.

\bibitem{BJL} T. Beth, D. Jungnickel, H. Lenz, Design Theory. Cambridge University Press, Cambridge, 1999.

\bibitem{BHeffter} M. Buratti,
\emph{Tight Heffter arrays from finite fields},
Fields Inst. Commun. \textbf{86} (2024), 25--36.


\bibitem{BP2} M. Buratti, A. Pasotti, \textit{Shiftable Heffter spaces}, preprint
available at https://arxiv.org/abs/2412.15685.

\bibitem{BP1} M. Buratti, A. Pasotti, \textit{More Heffter spaces via finite fields}, published online on J. Combin. Des.,  https://doi.org/10.1002/jcd.21974, preprint
available at https://arxiv.org/abs/2408.12412.

\bibitem{BP} M. Buratti, A. Pasotti, \textit{Heffter spaces}, Finite Fields Appl. \textbf{98} (2024), 102464.

\bibitem{BCP} A.C. Burgess, N.J. Cavenagh, D.A. Pike, \textit{Mutually orthogonal cycle systems}, Ars Math. Contemp. \textbf{23} (2023), P2.05.

\bibitem{BCDY} K. Burrage, N.J. Cavenagh, D. Donovan, E.\c{S}. Yaz\i c\i,
\emph{Globally simple Heffter arrays $H(n;k)$ when $k\equiv 0,3 \pmod{4}$},
Discrete Math. \textbf{343} (2020), 111787.



\bibitem{CDDY} N.J. Cavenagh, J.H. Dinitz, D. Donovan, E.\c{S}. Yaz\i c\i,
{\it The existence of square non-integer Heffter arrays},
Ars Math. Contemp. \textbf{17} (2019), 369--395.

\bibitem{CDY} N.J. Cavenagh, D. Donovan,  E.\c{S}.,  Yaz\i c\i,
\emph{Biembeddings of cycle systems using integer Heffter arrays},
J. Combin. Des. \textbf{28} (2020), 900--922.

\bibitem{CDP} S. Costa, M. Dalai, A. Pasotti, \emph{A tour problem on a toroidal board}, Austral. J. Combin. \textbf{76} (2020), 183–207.

\bibitem{CMPP} S. Costa, F. Morini, A. Pasotti, M.A. Pellegrini, \emph{A generalization
of Heffter arrays}, J. Combin. Des. \textbf{28} (2020), 171–-206.

\bibitem{CMPP2} S. Costa, F. Morini, A. Pasotti, M.A. Pellegrini, \emph{Globally simple Heffter arrays and orthogonal cyclic cycle decompositions}, Australas. J. Combin. \textbf{72} (2018), 549--593.


\bibitem{CPP} S. Costa, A. Pasotti, M.A. Pellegrini,  \emph{Relative Heffter arrays and biembeddings}, Ars
Math. Contemp. \textbf{18} (2020), 241–271.

\bibitem{DM} J.H. Dinitz, A.R.W.  Mattern,
\emph{Biembedding Steiner triple systems and $n$-cycle systems on orientable surfaces},
Austral. J. Combin. \textbf{67} (2017), 327--344.

\bibitem{DW} J.H. Dinitz, I.M.  Wanless,
{\it The existence of square integer Heffter arrays},
Ars Math. Contemp. \textbf{13} (2017), 81--93.

\bibitem{KY} S. Kukukcifci, E.S. Yazıcı, \emph{Orthogonal cycle systems with cycle length less than 10}, J. Combin. Des. \textbf{32} (2024), 31–-45.

\bibitem{MT} L. Mella, T. Traetta, \emph{Constructing generalized Heffter arrays via near alternating sign matrices}, J. Combin. Theory Ser. A \textbf{205} (2025), 105873.

\bibitem{Moh} B. Mohar, \emph{Combinatorial local planarity and the width of graph embeddings},
Canad. J. Math. \textbf{44} (1992), 1272--1288.

\bibitem{MP1} F. Morini, M.A. Pellegrini,  \emph{On the existence of integer relative Heffter arrays}, Discrete
Math. \textbf{343} (2020), 112088.


\bibitem{MP3} F. Morini, M.A. Pellegrini, \emph{Rectangular Heffter arrays: a reduction theorem}, Discrete
Math. \textbf{345} (2022), 113073.


\bibitem{PD} A. Pasotti, J.H. Dinitz, \textit{A survey of Heffter arrays},  
Fields Inst. Commun. \textbf{86} (2024), 353--392.   
\end{thebibliography}
\end{document}